\documentclass{amsart}
\usepackage{graphicx}        % standard LaTeX graphics tool
\usepackage[bottom]{footmisc}% places footnotes at page bottom
\usepackage{amsmath,amsfonts}%
\usepackage{amsthm}%
\usepackage{mathrsfs}%
\usepackage{xcolor}%
\usepackage{comment}
\usepackage{subcaption}
\usepackage{enumerate}
\usepackage{tikz}
\usepackage[margin=1in]{geometry}

\newtheorem{definition}{Definition}
\newtheorem{theorem}{Theorem}
\newtheorem{remark}{Remark}
\newtheorem{lemma}{Lemma}
\newtheorem{corollary}{Corollary}

%----------------------- Macros and Definitions FROM MEERA   --------------------------

\def\p {\mathbf{p}}

\newcommand{\SG}{}
\def\SG/{split-and-glue}

% see the list of further useful packages
% in the Reference Guide

\title{Nucleation-free independent graphs with implied nonedges}
\author{Jialong Cheng$^1$}
\author{Meera Sitharam$^1$}
\thanks{$^1$CISE Department, University of Florida, Gainesville, FL, USA}
\author{Ileana Streinu$^2$}
\thanks{$^2$Computer Science Department, Smith College, Northampton, MA, USA}
\author{William Sims$^{3,*}$}
\thanks{$^3$School of Information Technology, Illinois State University, Normal, IL, USA}
\thanks{$^*$Corresponding author, wsims3@ilstu.edu}
\date{}

%%%%%%%%%%%%%%%%%%%%%%%%%%%%%%%%%%%%%%%%%%%%%%%%%%%%%%%%%%%%%%%%%%%%%%%%%%%%%%%%%%%%%%%%%

\begin{document}
% Use \titlerunning{Short Title} for an abbreviated version of
% your contribution title if the original one is too long

\begin{abstract}
We give inductive constructions of independent graphs that contain implied nonedges but do not contain any non-trivial rigid subgraphs, or \emph{nucleations}: some  of  the constructions and proofs apply to 3-dimensional abstract rigidity matroids with their respective definitions of nucleations and implied nonedges.  
The first motivation for the inductive constructions of this paper, which generate an especially intractable class of flexible circuits, is to illuminate further obstacles to settling Graver's maximality conjecture that the 3-dimensional generic rigidity matroid is isomorphic to   Whiteley's cofactor matroid (the unique maximal matroid in which all graphs isomorphic to $K_5$ are circuits).
While none of the explicit examples we provide refutes the maximality conjecture (since their properties hold in both matroids) the construction schemes are useful regardless whether the conjecture is true or false, e.g. for constructing larger (counter)examples from smaller ones.   
The second motivation is to make progress towards a polynomial-time algorithm for deciding independence in the abovementioned maximal matroid.  
Nucleation-free graphs with implied nonedges, such as the families constructed in this paper, are the key obstacles that must be dealt with for improving the current state of the art.
\end{abstract}

\maketitle

% \input{main}

%----------------------------------------------------------          Introduction           

\section{Introduction}
\label{sec:introduction}

A combinatorial characterization of graph rigidity, or generic framework rigidity, in 3-dimensions, sought for nearly 150 years since Maxwell \cite{maxwell:equilibrium:1864}, would ideally provide a polynomial-time algorithm to decide if an input graph is independent in the 3-dimensional generic rigidity matroid $\mathcal{R}_3$ over the complete graph $K_n$.  
Even showing the existence of short  certificate of dependence (polynomial in the size of the input graph), i.e. placing the problem in the complexity class co-NP, would represent significant progress.  
For any matroid with an algebraic representation over any field of characteristic 0 or large enough finite field, the problem is in NP and in fact has a randomized polynomial-time (RP) algorithm \cite{demillo1978probabilistic,schwartz1980fast,zippel1979probabilistic}, which guarantees numerous short certificates for independence.  
A co-NP characterization would therefore place the problem in RP $\cap$ co-NP.   

A closely related matroid $\mathcal{M}_{K_5}$ is defined as (a) belonging to the class of matroids in which $K_5$-isomorphic graphs are circuits which includes all abstract 3-dimensional rigidity matroids, and  (b) ensuring independence of any graph that is independent in any matroid in this class.
Such a matroid is unique if it exists, and a co-NP characterization of dependence is based on the intuition that any $\mathcal{M}_{K_5}$-\emph{implied nonedge} $e$ of a graph $G$ - defined as belonging to the closure $CL_{\mathcal{M}_{K_5}}(G)$ but not in $G$- is contained within a $K_5$ in this closure.  
Clinch, Jackson and Tanigawa \cite{Clinch2022Abstract,Clinch2022Abstract2} showed that  Whiteley's cofactor matroid \cite{whiteley:Matroids:1996} from spline approximation theory is $\mathcal{M}_{K_5}$, i.e. it satisfies (a) and (b) above. 
It is this algebraic 
representation that further gives an RP $\cap$ co-NP characterization of independence in $\mathcal{M}_{K_5}$.  

\begin{figure}[htb]
    \centering
    \begin{subfigure}[t]{0.49\linewidth}
        \centering
        \includegraphics[width=0.5\linewidth]{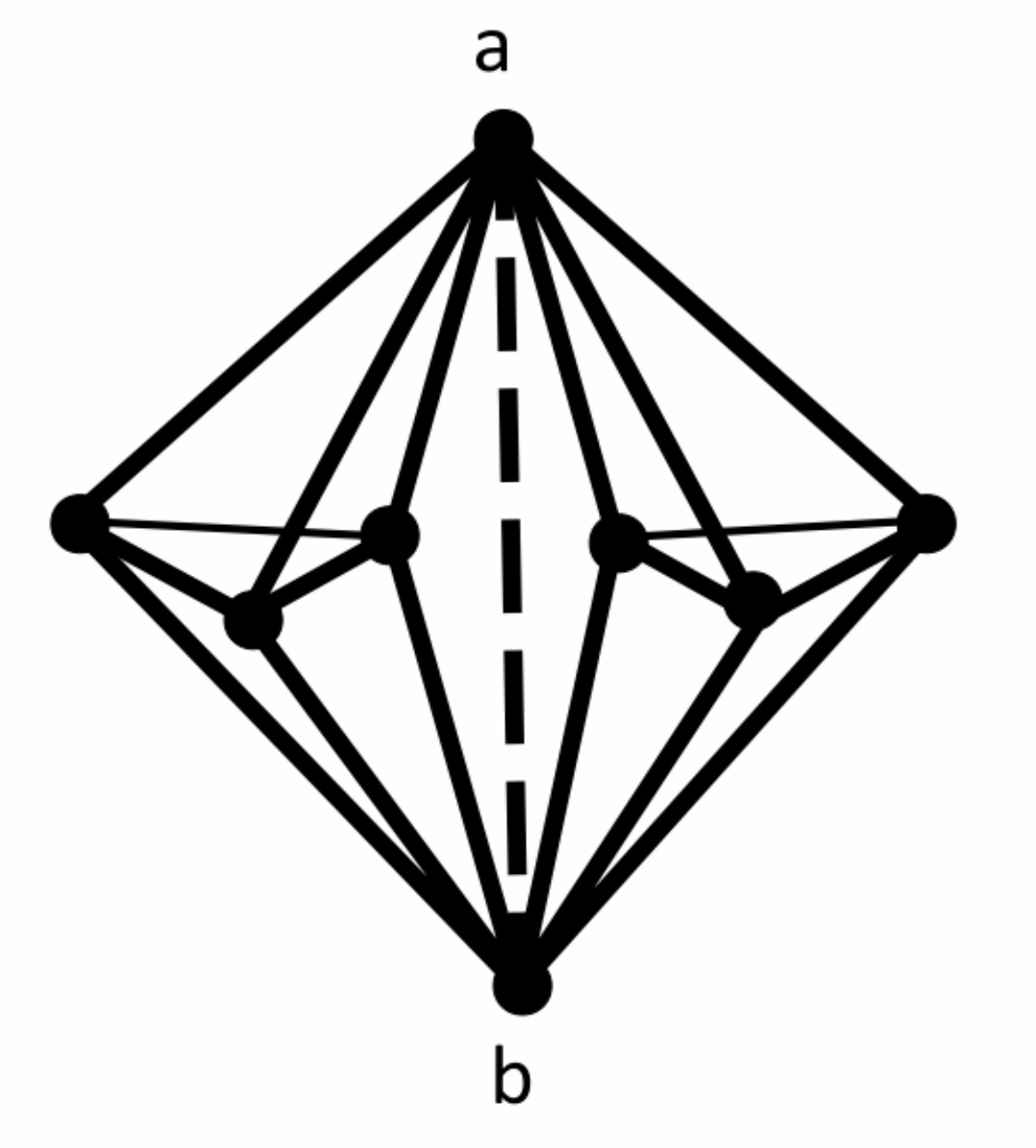}
    \end{subfigure}
    \begin{subfigure}[t]{0.49\linewidth}
        \centering
        \includegraphics[width=0.8\linewidth]{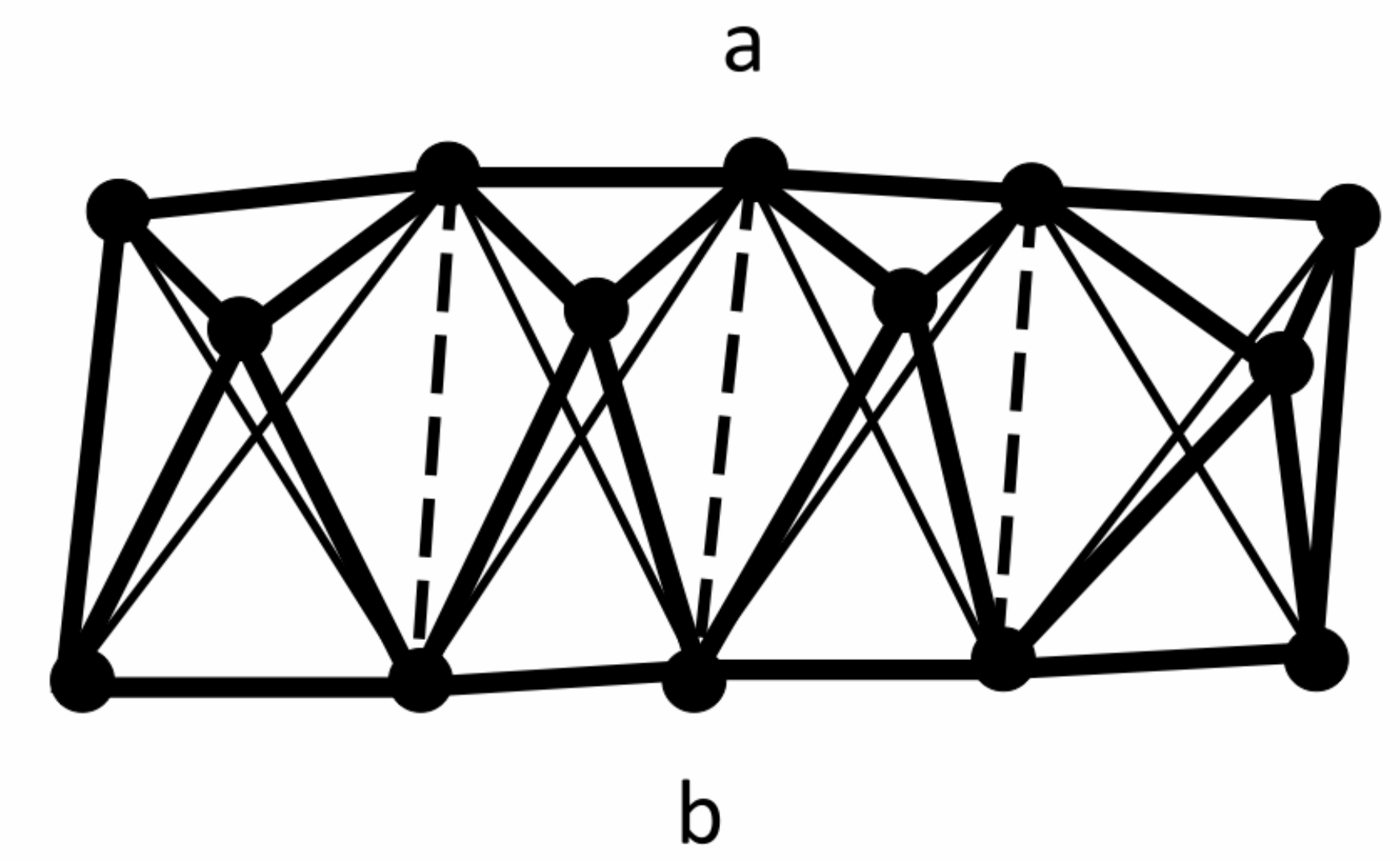}
    \end{subfigure}
    \caption{(Left) a double-banana and (Right) Crapo's hinge both with $(a,b)$ as an $\mathcal{R}_3$-implied nonedge.  
    See the flexible circuit discussion in Section \ref{sec:introduction}.}
    \label{fig:doubleBanana}
\end{figure}

Improving this characterization to a polynomial-time characterization would be a breakthrough, not least because of the maximality conjecture of Graver \cite{graver1991rigidity} which suggests that $\mathcal{R}_3$ and $\mathcal{M}_{K_5}$ are identical.  
An example sufficient to counter the maximality conjecture would be an $\mathcal{R}_3$-implied nonedge $e$ of a graph $G$ that is not contained in any $K_5$ in the closure $CL_{\mathcal{R}_3}(G)$.  
A necessary condition for such a counterexample is that any subgraph $G' \subseteq G$ -- for which   $G' \cup e$ is a circuit -- is flexible.  
However, this condition is far from sufficient.  
Both $\mathcal{M}_{K_5}$ and $\mathcal{R}_3$ have have implied nonedges arising only from flexible circuits such as the well-known double-banana and Crapo's hinge shown in Figure \ref{fig:doubleBanana}, and many others \cite{crapo:structuralRigidity:1979,grasegger2022flexible}.
 
In this paper, we illuminate further obstacles to settling the maximality conjecture by giving inductive 
constructions of a particularly intractable class of flexible circuits arising from independent graphs that contain implied nonedges but do not contain any rigid subgraphs on at least $5$ vertices.  
We call such subgraphs \emph{nucleations} for reasons explained below. These constructions apply to both the abovementioned matroids, and  some of our results  apply to any 3-dimensional abstract rigidity matroid, with their respective definitions of  nucleations and  implied nonedges. While pointing out  all such results is beyond the scope of this paper, we indicate a few.   

While none of the explicit examples we provide refutes the maximality conjecture, since their properties hold in both $\mathcal{M}_{K_5}$ and $\mathcal{R}_3$, the construction schemes, which generate a special class of flexible circuits, are valid even if the conjecture is false.  
In other words, the constructions can build on small (counter)examples  to generate larger ones, and several of the constructions can build on (counter)examples that separate any pair of abstract 3-dimensional rigidity matroids.  

It should be noted that so far even extremal potential counterexamples to the maximality conjecture have not been ruled out (see Figure \ref{fig:open}).  
For example, to the best of our knowledge, it is unknown whether there is a graph $G$ for which no $\mathcal{R}_3$-implied nonedge is  contained in any $K_5$ in the closure $CL_{\mathcal{R}_3}(G)$.   
It is even unknown whether there is a flexible circuit in $\mathcal{R}_3$ (with 6 or more vertices) that is closed (this is equivalent to the existence of a nucleation-free graph with a single implied nonedge).  
Therefore it is relevant that the constructions of this paper do not depend on the status of the maximality conjecture.  

Next we explain a second motivation for our constructions.  
Regardless whether the maximality conjecture is true or false, since graphs that are dependent in $\mathcal{M}_{K_5}$ are also dependent in $\mathcal{R}_3$, it is difficult to imagine a polynomial-time  algorithm that detects $\mathcal{R}_3$-dependence without also detecting $\mathcal{M}_{K_5}$-dependence.

Previous attempts at combinatorial algorithms \cite{hoffman2001decomposition,sitharam:zhou:tractableADG:2004} have in effect attempted the latter to approximately determine independence (i.e. output any known independence or dependence certificates) in part because they extend the corresponding polynomial-time algorithms for \cite{hoffmann1997finding,jacobs1997algorithm,lee2008pebble}   in 2-dimensions, or in general for sparsity matroids. These apply also to  special classes of graphs such as squared graphs or molecular graphs in 3-dimensions, for which  
the $\mathcal{R}_3$-independence   is equivalent to independence of a body-hinge, or panel-hinge structure \cite{katoh:tanigawa:proofMolecularConjecture:DCG:2011,tay1991linking,tay:rigidityMultigraphs-II:1989,whiteley1988union,crapo1982statics} in a corresponding matroid.  
These polynomial-time algorithms are a consequence of the equivalence of these rigidity matroids and appropriate sparsity matroids \cite{crapo1982statics,laman:rigidity:1970,nash1964decomposition,PG,bib:TayWhiteley85,tay:rigidityMultigraphs-I:1984,tay1991linking,tay:rigidityMultigraphs-II:1989,tay:proofLaman:1993,bib:counter3,tutte1961problem,white1987algebraic, whiteley:Matroids:1996}.  
For instance, not only is every implied nonedge $f$ of $G$ contained in some $K_4$ in the   closure $CL_{\mathcal{R}_2}(G)$ of $G$, the subgraph $G' \subseteq G$,  where $G'\cup f$ is a circuit, is in fact $\mathcal{R}_2$-rigid.  
Analogous statements hold in $\mathcal{R}_3$ for the above-mentioned special classes of graphs.

In contrast, because $\mathcal{M}_{K_5}$ and $\mathcal{R}_3$ are not sparsity matroids, they can have implied nonedges arising only from flexible circuits.  
Some previous combinatorial algorithms \cite{hoffman2001decomposition,sitharam:zhou:tractableADG:2004} have mitigated this issue by repeatedly alternating 2 steps: locating \emph{nucleations} (a term used in physical settings for structures that initiate or "nucleate" a process) and then replacing them with their closures, namely a clique, or a stand-in graph.  
Subgraph rigidity certificates are found e.g. using sparsity based methods for the above-mentioned special classes or by the application of rank bounds \cite{ChengSitharam2010, jackson:jordan:rank3dRigidity:egres-05-09:2005}.  
Nucleation-free graphs with implied nonedges, such as the families constructed in this paper, are the key obstacles that must be dealt with not only for improving such algorithms, but for obtaining polynomial-time characterizations of independence in $\mathcal{M}_{K_5}$ overall. 

These are the two primary motivations for the constructions given in this paper.  
See Section \ref{sec:contributions} and \ref{sec:conclusion} for further details.  
As corollary, we construct nucleation-free flexible circuits with arbitrarily many independent flexes.

\subsection{Organization}  
In Section \ref{sec:background} we provide a list of rigidity concepts used in the paper with pointers to literature. In Section \ref{sec:contributions}, we formally state our main theorems and other contributions.  
Section \ref{sec:rings-of-roofs} gives a warm up example, the "ring of butterflies," that provides the underlying intuition and, together with Section \ref{sec:induct},  gives  basic constructions.  
Section \ref{sec:split-and-glue} presents proofs of the main theorems that provide inductive constructions of independent, nucleation-free graphs with implied nonedges, along with conditions on the base graphs to which these constructions can be applied.  
Examples of base graphs are also provided.  
Section \ref{sec:dependent} gives corollaries that construct nucleation-free flexible circuits with arbitrarily many independent flexes, and the final Section \ref{sec:conclusion} concludes with a map of relevant open problems.

\subsection{Background}
\label{sec:background}
The paper assumes familiarity with the following concepts, constructions and results in  graph rigidity: \cite{graver:servatius:rigidityBook:1993,sitharam2018handbook}:
  
\begin{itemize}
\item
2-dimensional and 3-dimensional generic rigidity matroids $\mathcal{R}_2$ and $\mathcal{R}_3$ over $K_n$:
 generic frameworks,    rigidity matrices,   generic (infinitesimal) rigidity, flexes, independence,  circuits, dependence, and equilibrium self-stresses.
 
 \item Standard inductive constructions that preserve independence and/or rigidity including \emph{$k$-sums (without edge deletion), Henneberg-I/II extensions, and   vertex-splits}.  
 We call the neighborhood of the vertex added by a Henneberg-I/II extension the \emph{base vertex set} of this operation.  
 We call a vertex-split where $0 \leq k \leq d$ edges are added a \emph{$k$-vertex-split}.  
 
\item The  equivalence of $\mathcal{R}_2$ and the \emph{$(2,3)$-sparsity matroid}, and the unique \emph{maximal matroid} for which all graphs isomorphic to $K_4$ are circuits. 
\item \emph{Abstract 2- and 3- dimensional rigidity matroids} \cite{nguyen2010abstract}, standard inductive constructions including $k$-sums and Henneberg-I that preserve independence and/or rigidity. 
\item Whiteley's \emph{co-factor matroid} \cite{whiteley:Matroids:1996}, and its equivalence to the unique maximal matroid for which all graphs isomorphic to $K_5$ are circuits \cite{Clinch2022Abstract,Clinch2022Abstract2}, which we denote as $\mathcal{M}_{K_5}$. 
\end{itemize}

In addition, we define the following   for any 3-dimensional abstract rigidity matroid.  
A \emph{nucleation} of a graph $G$ is a rigid subgraph on at least five vertices.  
$G$ is \emph{nucleation-free} if it has no nucleation.  
A nonedge $f$ of $G$ is \emph{implied} if $G \cup f$ contains a circuit that includes $f$.  

\smallskip\noindent
\textbf{Notation and Conventions.}
The vertex and edge sets of $G$ are written as $V(G)$ and $E(G)$, respectively.  
For any subgraph $H$ of $G$ and any set $F$ of pairs of distinct vertices in $G$, we define the graph $H \cup F = (V(H), E(H) \cup F')$, where $F'$ is the subset of $F$ containing pairs of vertices whose endpoints are in $V(H)$. 
We write $H \cup F$ as $H \cup f$ when $F = \{f\}$.

We refer to concepts such as rigidity, independence, circuits, implied nonedges, without mentioning     $\mathcal{R}_3$, since all results in the paper also apply to $\mathcal{M}_{K_5}.$   
  All contributions listed in Section \ref{sec:contributions} except Theorems \ref{thm:ring_of_roofs}, \ref{thm:prev_ops} (iii, iv), \ref{thm:henneberg-ii_ring}, \ref{thm:drsg-ind}, and \ref{thm:starter}(i)   apply  to general 3-dimensional abstract rigidity matroids.

\section{Contributions}
\label{sec:contributions}

Theorem \ref{thm:ring_of_roofs}, below, gives a warm-up example of independent nucleation-free graphs with implied nonedges.  
We use this example to demonstrate a variety of proof methods.  

\begin{figure}[htb]
    \centering
    \begin{subfigure}[t]{0.49\linewidth}
        \centering
        \includegraphics[width=0.7\textwidth]{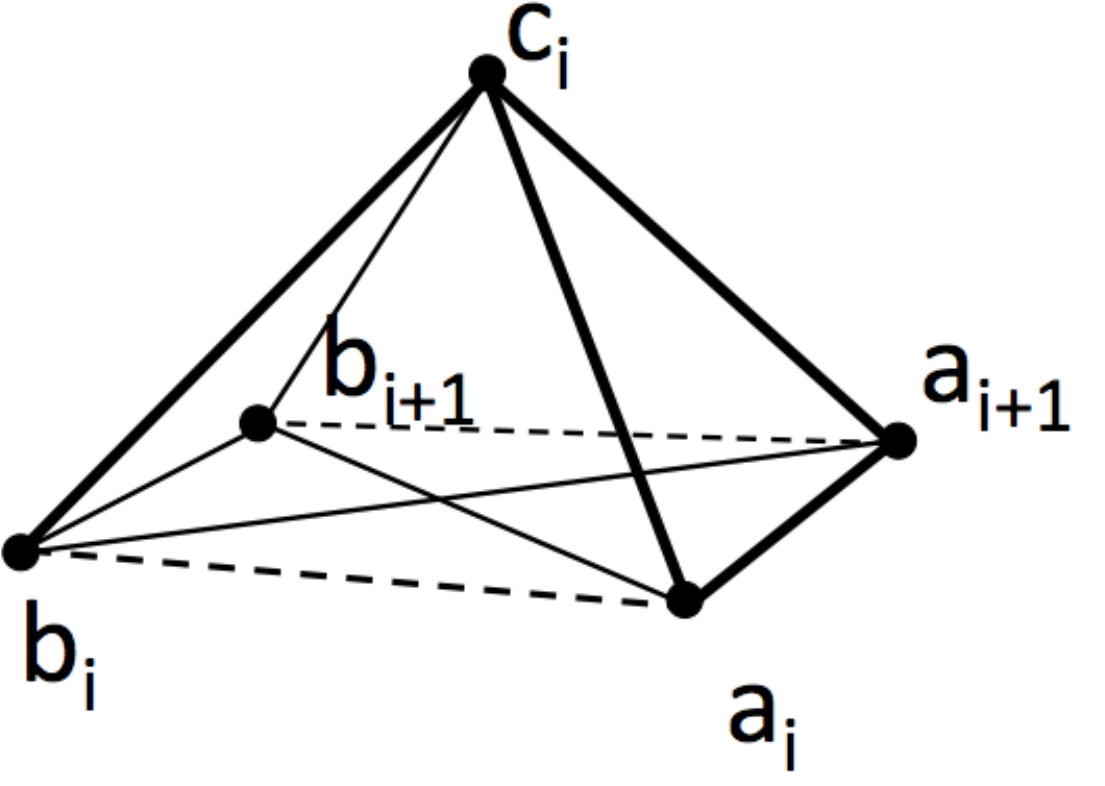}
    \end{subfigure}
    \begin{subfigure}[t]{0.49\linewidth}
        \centering
        \includegraphics[width=0.9\textwidth]{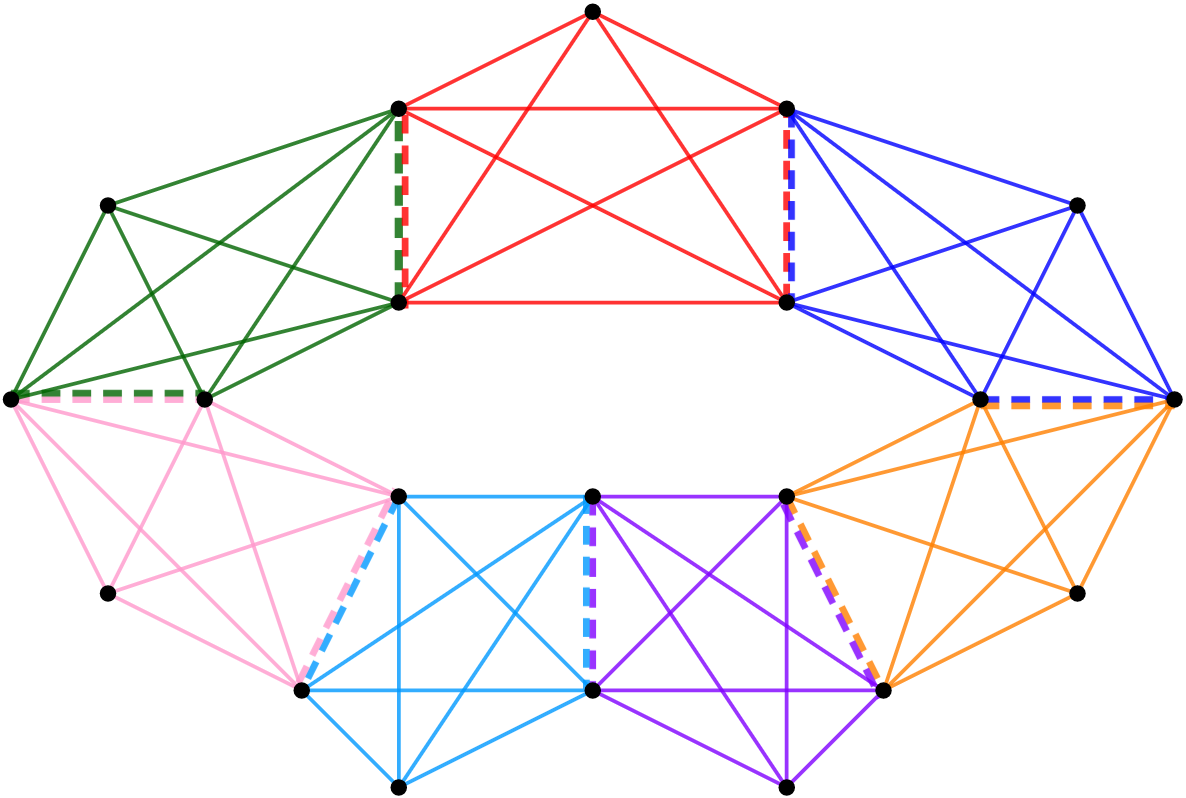}
    \end{subfigure}
    \caption{(Left) A butterfly graph and (Right) A ring of $7$ butterflies and a 2-thin cover whose clusters are colored and whose shared nonedges are depicted as parallel dashed line-segments of different colors.  
    See the definition of a ring of butterflies in Section \ref{sec:contributions} and the definition of a 2-thin cover in Section \ref{sec:rings-of-roofs}.}
    \label{fig:roof}
\end{figure}

% \begin{figure}[htb]
%     \centering
%     \begin{subfigure}[t]{0.49\linewidth}
%         \centering
%         \includegraphics[width=0.7\textwidth]{figs/roof.pdf}
%     \end{subfigure}
%     \begin{subfigure}[t]{0.49\linewidth}
%         \centering
%         \includegraphics[width=0.7\textwidth]{figs/roof-sch.pdf}
%     \end{subfigure}
%     \caption{(Left) A butterfly graph and (Right) a schematic representation of it.  }
%     \label{fig:roof}
% \end{figure}

\begin{definition}[Ring]
    A \emph{ring} $R_m = R(G_1,\dots,G_m)$ is a graph constructed from $m \geq 3$ vertex-disjoint graphs $G_1,\dots,G_m$, called \emph{links}, where each $G_i$ contains a set $X_i=\{a_i,b_i,c_i,d_i\}$ of \emph{hinge vertices}, by identifying $(a_1,b_1)$ with $(c_m,d_m)$ and every other $(c_i,d_i)$ with $(a_{i+1},b_{i+1})$.  
    The pairs $(a_i,b_i)$ and $(c_i,d_i)$ are called \emph{hinges} (of $G_i$ or $R_m$).  
    % $R_m$ is a \emph{nonedge-ring} if all its hinges are nonedges, and an \emph{edge-ring} if all its hinges are edges.  
    % $R_m$ is \emph{ring of butterflies} if it is a nonedge-ring whose links are all butterflies (see Figure \ref{fig:ring}).
\end{definition}

A \emph{butterfly} is a graph obtained from a complete graph $K_5$ by deleting two non-incident edges, and a \emph{ring of butterflies} is a ring whose links are all butterflies (see Figure \ref{fig:roof}).  

% \begin{figure}[htb]
%     \centering
%     \begin{subfigure}[t]{0.49\linewidth}
%         \centering
%         \includegraphics[width=0.7\textwidth]{figs/ring.pdf}
%     \end{subfigure}
%     \begin{subfigure}[t]{0.49\linewidth}
%         \centering
%         \includegraphics[width=0.7\textwidth]{figs/ring-sch.pdf}
%     \end{subfigure}
%     \caption{(Left) A ring of $7$ butterflies and (Right) a schematic representation of it.}
%     \label{fig:ring}
% \end{figure}

\begin{theorem}[Rings of butterflies have implied nonedges]
    \label{thm:ring_of_roofs}
    Each hinge in a ring of butterflies $R_m$ is implied.  
    Furthermore,
    for $m \geq 7$, $R_m$ is nucleation-free, 
    for $m \geq 6$, $R_m$ is independent with $m-6$ independent flexes, and  
    for $m \leq 5$, $R_m$ is dependent.
\end{theorem}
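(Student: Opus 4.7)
First I would compute $|V(R_m)| = 3m$ (each of the $m$ butterflies contributes $5$ vertices, minus $2$ per hinge identification) and $|E(R_m)| = 8m$ (each butterfly contributes $8$ edges, and hinges are nonedges so nothing is identified on edges). The rank bound $\text{rank}(R_m) \leq 3|V(R_m)| - 6 = 9m - 6$ is strictly smaller than $|E(R_m)| = 8m$ exactly when $m \leq 5$, immediately giving the dependence claim. Once independence is established for $m \geq 6$, the flex dimension equals $(9m - 6) - 8m = m - 6$, which gives the ``$m - 6$ independent flexes'' claim for free.

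\textbf{Independence for $m \geq 6$.} This is the crux. I would proceed by induction on $m$. For the base case $m = 6$, where $R_6$ is an isostatic candidate ($|E| = 48 = 9 \cdot 18 - 6$), I would build $R_6$ from $K_4$ by a sequence of vertex splits and Henneberg-I/II extensions on carefully chosen base vertex sets, each of which preserves independence in any $3$-dimensional abstract rigidity matroid. For the inductive step I would introduce a ``hinge-expansion'' operation that inserts a new butterfly into one hinge of $R_m$ to produce $R_{m+1}$, and justify this move as a short sequence of standard independence-preserving operations (a vertex split plus Henneberg extensions). Alternatively, guided by Figure~\ref{fig:roof}, I would use a $2$-thin cover whose clusters are the butterflies and whose shared nonedges are the hinges, together with a matching rank upper bound from Section~\ref{sec:rings-of-roofs}, to pin $\text{rank}(R_m) = 8m$.

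\textbf{Implied hinges.} By the rotational symmetry of the ring it suffices to show that one hinge $e = (a_i, b_i) = (c_{i-1}, d_{i-1})$ is implied, i.e., that $R_m \cup e$ is dependent. The key observation is that $e$ simultaneously closes both adjacent butterflies: $G_{i-1} \cup e$ and $G_i \cup e$ each become a copy of $K_5$ minus an edge, which is rigid. For $m \leq 6$ the count $8m + 1 > 9m - 6$ already forces dependence. For $m \geq 7$, I would argue structurally: each of the two adjacent rigid ``plugs'' inherits a unique self-stress from the ambient $K_5$, and I would propagate these stresses around the remaining chain of $m - 2$ butterflies using the hinge-length derivatives of each intermediate butterfly's unique flex, producing an equilibrium self-stress on $R_m \cup e$ that is nonzero on $e$.

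\textbf{Nucleation-free for $m \geq 7$.} I would show that no subgraph $H \subseteq R_m$ with $|V(H)| \geq 5$ satisfies $|E(H)| \geq 3|V(H)| - 6$. This reduces to a per-butterfly bookkeeping: each butterfly met by $H$ contributes at most $8$ edges and at least $3$ new vertices (after hinge sharings), yielding a strictly positive deficit whenever $m \geq 7$; a case analysis over contiguous versus non-contiguous subsets of butterflies and over partial butterflies met by $H$ completes the argument. The hardest steps will be the independence proof for $m \geq 6$ and the structural argument for implied hinges when $m \geq 7$; both require either a dedicated inductive construction or a direct stress/flex argument exploiting the ring's cyclic symmetry.
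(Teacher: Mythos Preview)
Your counting and the dependence/flex-count deductions are correct, and your nucleation-free sketch is in the same spirit as the paper's. The independence argument is where your plan diverges: the paper does not induct on $m$ but instead observes that $R_m$ is obtained from a ring $R'_m$ of $K_4$'s by performing one Henneberg-II extension per link, and then invokes a known result (or a panel-hinge argument) that $R'_m$ is independent for $m\geq 6$. Your inductive ``hinge-expansion'' could perhaps be made to work, but note that your fallback of using the $2$-thin cover to ``pin $\operatorname{rank}(R_m)=8m$'' is backwards: Lemma~\ref{thm:2-thin-rank} supplies only an \emph{upper} bound on rank, so it cannot by itself certify independence.

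The genuine gap is in the implied-hinge argument. For $m\leq 5$ your inequality $8m+1>9m-6$ only shows that $R_m\cup e$ is dependent, but $R_m$ is already dependent there, so you have not placed $e$ in any circuit; the paper closes this by showing $R_m$ is rigid for $m\leq 6$. For $m\geq 7$, your plan to ``propagate self-stresses around the remaining chain of $m-2$ butterflies'' is essentially Tay's argument, which Appendix~\ref{sec:Taycounter} of the paper explains is incomplete: one cannot guarantee the propagated stress is nonzero on $e$ (cancellation along the chain is not ruled out). The paper's clean replacement is the rank-sandwich: the $2$-thin cover whose clusters are the link vertex-sets gives $\operatorname{rank}(R_m\cup f)\leq IE(R_m,\mathcal{X})=8m$, while independence gives $\operatorname{rank}(R_m)=8m$, hence $\operatorname{rank}(R_m\cup f)=\operatorname{rank}(R_m)$ and $f$ is implied. (A second, independent proof uses the flex-sign technique on a framework built from one convex and $m-1$ pseudo-triangular butterflies.)
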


We give two proofs of Theorem \ref{thm:ring_of_roofs} in Section \ref{sec:rings-of-roofs} that utilize two techniques for showing that the nonedge hinges of a ring are implied.  
The \emph{flex-sign} technique relies only on the existence of certain generic frameworks of a ring, but has the serious disadvantage that the ring of butterflies is the only known example of a ring with such frameworks.  
On the other hand, the \emph{rank-sandwich} technique is used throughout this paper to show the existence of implied nonedges in both rings and more general graphs, but requires these graphs to be independent and to have a special $2$-thin covers, defined in Section \ref{sec:rings-of-roofs}.  
% , as we now show.  
% Figure \ref{} illustrates three types of butterfly frameworks, which we refer to as crossing (left), non-crossing (middle), and pseudo-tiangular (right).  
% A butterfly framework together with its nonedges is expansive-contractive if it is non-crossing \cite{}, and expansive-expansive if it is pseudo-triangular \cite{}.  
% Clearly, any ring of butterflies $R_m = R(G_1,\dots,m)$, where $f_i$ and $f'_i$ are the hinges of the link $G_i$, has a framework $R_m(p)$ in which $(G_1(p),f_1,f_2)$ is non-crossing and $(G_i(p),f_i,f_i)$ is pseudo-triangular for all $i \geq 2$.  
% Furthermore, all frameworks in some neighborhood of $R_m(p)$ have this property, and some such framework is generic.  
% Hence, we can 

By Theorem \ref{thm:ring_of_roofs}, rings of butterflies are examples of arbitrarily large independent nucleation-free graphs with implied nonedges.  
Theorem \ref{thm:prev_ops}, below shows that standard inductive constructions preserve some or all of these properties.

\begin{theorem}[Constructions preserving independence, nucleation-freeness, and implied nonedges]
    \label{thm:prev_ops}
    Let $G$ and $H$ be independent and nucleation-free with sets $F_1$ and $F_2$ of implied nonedges, respectively.  
    Then,
    \begin{enumerate}[(i)]
        \item For any $k \in \{0,1,2\}$, a $k$-sum of $G$ and $H$ is independent and nucleation-free and $F_1 \cup F_2$ is a set of its implied nonedges.  
        This also holds for $k = 3$ if the base complete graphs are not contained in any $K_4$ subgraph of $G$ or $H$.  
        
        \item A Henneberg-I extension on $G$ yields an independent nucleation-free graph in which all nonedges in $F_1$ are implied if the base vertex set is not contained in any $K_4$ subgraph of $G$.
        
        \item A Henneberg-II extension on $G$ yields an independent nucleation-free graph $G'$ if the base vertex set does not induce a $K_4$ subgraph of $G$.  
        If the edge deleted by this operation is implied in $G'$, then so is each nonedge in $F_1$.  
        
        \item For any $k \in \{0,1,2\}$, a $k$-vertex split on $G$ yields an independent nucleation-free graph.  
    \end{enumerate}
\end{theorem}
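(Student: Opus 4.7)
The plan is to handle all four parts via a common template, verifying in each case: independence preservation, nucleation-freeness, and preservation of implied nonedges. Independence for (i) with $k\in\{0,1,2\}$ is the classical $k$-sum theorem; for $k=3$ it follows from a rank count combined with the hypothesis that neither base $K_3$ lies in a $K_4$ subgraph, which prevents the glued triangle from rigidifying into a piece bridging both sides. Parts (ii)--(iv) invoke the Tay--Whiteley and Whiteley vertex-splitting theorems cited in the background section.

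The nucleation-freeness argument is the technical heart and is carried out by contradiction. Suppose $H'$ is a rigid subgraph of the constructed graph on at least five vertices. For (i), either $H'$ is entirely contained in one summand (immediate contradiction with nucleation-freeness of that side), or it spans both sides across a glue of at most three vertices; a matroid-union/rank argument then forces a rigid subgraph on at least five vertices to sit in one summand, with the $k=3$ case using the no-$K_4$ hypothesis to exclude the degenerate configuration of two $K_4$'s sharing the glue $K_3$. For (ii), the new vertex $v$ has degree at most three in $H'$, and the rank bound $\mathrm{rank}(H'-v)\geq \mathrm{rank}(H')-3=3|V(H'-v)|-6$ forces $H'-v$ to be a rigid subgraph of $G$ on $|V(H')|-1\geq 4$ vertices; if $|V(H')|\geq 6$ this directly contradicts nucleation-freeness of $G$, and if $|V(H')|=5$ then $H'-v$ is a $K_4$ containing the base vertex set, contradicting the no-$K_4$ hypothesis. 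For (iii), I apply a Henneberg-II reverse: $v$ has degree $d\leq 4$ in $H'$; the case $d=3$ mirrors (ii), while the case $d=4$ produces a pair $\{w_i,w_j\}$ among $v$'s neighbors such that $H'-v+\{w_i,w_j\}$ is rigid, after which the same case analysis applies. Part (iv) is handled by contracting the split edge $(u,v)$ in $H'$ and repeating the rank argument, this time obtaining a rigid subgraph of $G$ on $|V(H')|-1\geq 4$ vertices.

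Preservation of implied nonedges is immediate in (i), (ii), (iv), because $G$ (and $H$) embeds in the constructed graph and any circuit witnessing $f\in F_1$ persists. Part (iii) is subtler because the edge $e$ is removed: if the witnessing circuit $C\subseteq G\cup f$ contains $e$, I use the hypothesis that $e$ is implied in $G'$ to obtain a circuit $C'\subseteq G'\cup e$ through $e$ (with $f\notin C'$ since $f$ is a nonedge of $G'$), then apply the circuit elimination axiom to $C$ and $C'$ at $e$ to extract a circuit through $f$ contained in $(C\cup C')-e\subseteq G'\cup f$.

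The main obstacle I expect is the $d=4$ subcase of part (iii): the Henneberg-II reverse is not unique among the $\binom{4}{2}$ candidate replacement pairs, and one must verify that some choice yields a rigid subgraph actually sitting inside $G$ rather than merely inside some extension $G\cup\{w_i,w_j\}$. Tracking which candidate pairs are already edges of $H'-v$, which coincide with the deleted edge $e$, and which lie outside $G$ altogether, is the delicate combinatorial bookkeeping needed to close the argument. The circuit-elimination transfer of implied nonedges through $e$ in (iii) is the second place where subtlety arises, and is precisely where the auxiliary hypothesis that $e$ is implied in $G'$ is used.
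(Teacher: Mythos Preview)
Your template matches the paper's: independence via standard preservation theorems (one correction: the $3$-sum needs no $K_4$ hypothesis for independence---it follows from the gluing lemma for abstract $3$-rigidity matroids, the paper's Lemma~\ref{lem:abs}, for all $k\le 3$; the $K_4$ condition enters only for nucleation-freeness), nucleation-freeness by contradiction, and implied nonedges via circuit persistence with circuit elimination in (iii). Your flagged concern about the $d=4$ sub-case of (iii) is actually the easy one, and you do not need the non-unique general Henneberg-II reverse: add back the \emph{specific} edge $e$ that the original extension deleted. Then $H'':=H'-v+e\subseteq G$ (since $H'-v\subseteq G-e$ and the endpoints of $e$ lie in $W\subseteq V(H')\setminus\{v\}$); independence of $G$ forces $H''$ independent, and the edge count $|E(H'')|=|E(H')|-3=3|V(H'')|-6$ forces $H''$ rigid. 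Now $|V(H'')|\ge 5$ contradicts nucleation-freeness of $G$, while $|V(H'')|=4$ gives $H''=K_4$ on $W$, contradicting the hypothesis.

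The genuine gap is the $d=3$ sub-case of (iii), which you say ``mirrors (ii)'' but does not. With $d=3$ only three base vertices, say $w_1,w_2,w_3$, lie in $H'$ and $w_4\notin V(H')$; your rank argument makes $H'-v$ rigid, but when $|V(H')|=5$ this is a $K_4$ on $\{w_1,w_2,w_3,x\}$ for some $x\ne w_4$, and the hypothesis $G[W]\ne K_4$ says nothing about such a $K_4$. Concretely, take $G$ to be $K_4$ on $\{w_1,w_2,w_3,x\}$ together with a single pendant edge $w_1w_4$: then $G$ is independent, nucleation-free, and $G[W]$ has only four edges, yet the Henneberg-II extension that deletes $e=w_1w_4$ produces a $G'$ in which $\{v,w_1,w_2,w_3,x\}$ induces $K_5-vx$, a nucleation. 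The paper's own proof (``a similar argument shows'') skips this case as well, so statement (iii) as written appears to require a stronger hypothesis, for instance that no three vertices of $W$ lie in a $K_4$ of $G-e$.
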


\begin{remark}
    Even without the $K_4$ conditions,  Items (i)-(iii) preserve independence and implied nonedges; and  Item (i) preserves implied nonedges for all $k$ and independence for $k\le 4$.
\end{remark}

Theorem \ref{thm:prev_ops} is proved in Section \ref{sec:induct}.  

Theorem \ref{thm:henneberg-ii_ring}, stated below and proved in Section \ref{sec:induct}, provides an inductive construction using Henneberg-II extensions to take an independent ring with nucleations and possibly no implied nonedges and transform it into an independent nucleation-free ring whose hinges are implied nonedges.  

\begin{theorem}[Constructing nucleation-free rings with implied nonedges via Henneberg-II extensions]
    \label{thm:henneberg-ii_ring}
    Consider a ring $R'_m$, with $m \geq 7$, obtained from a ring $R_m=R(G_1,\dots,G_m)$, whose hinges are all edges, by performing a Henneberg-II extension on each link $G_i$ that uses its hinge vertices as the base vertex set and deletes a hinge edge.  
    If $R_m$ is independent, each $G_i$ is rigid, and the graph obtained from each $G_i$ by deleting its hinge edges is nucleation-free, then $R'_m$ is independent and nucleation-free, and each of its hinges is implied.  
\end{theorem}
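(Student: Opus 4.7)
The plan is to view $R'_m$ as the result of performing the $m$ specified Henneberg-II extensions on $R_m$ one link at a time, and to combine iterated application of Theorem \ref{thm:prev_ops}(iii) (together with the remark immediately following it) with a rank-based Henneberg-II reversal argument for the implied-nonedge claim. The proof splits into two parts: first, showing that $R'_m$ is independent and nucleation-free; and second, showing that each hinge is an implied nonedge.

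For independence, the hypothesis that $R_m$ is independent together with the remark after Theorem \ref{thm:prev_ops}, which states that Henneberg-II preserves independence without any $K_4$ condition, gives this directly. For nucleation-freeness, I would apply Theorem \ref{thm:prev_ops}(iii) inductively, checking at each step that the base vertex set $X_i = \{a_i, b_i, c_i, d_i\}$ does not induce a $K_4$ in the current graph when Henneberg-II is performed on $G_i$. This should follow from the assumption that $G_i$ with both hinge edges removed is nucleation-free (together with the side condition $|V(G_i)| \geq 5$), which prevents all four non-hinge pairs among $X_i$ from being edges of $G_i$, combined with the fact that Henneberg-II operations on other links add no edges among vertices that already exist. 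Any rigid subgraph of $R'_m$ spanning two or more links must cross a $2$-vertex hinge separator, a classical obstruction to $3$D rigidity (as in the double-banana or Crapo's hinge), and the hypothesis $m \geq 7$ prevents small-$m$ wrap-arounds of the ring from aggregating into a rigid block.

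For implied hinges, fix any hinge $h$ that is a nonedge of $R'_m$. By relabeling if necessary, we may assume $h = (a_i, b_i)$ is the hinge edge deleted by the Henneberg-II extension on $G_i$. Consider $R'_m \cup h$ and remove the new vertex $v_i$ together with its four edges to $X_i$. The resulting graph $S$ is obtained from $R_m$ by performing Henneberg-II on each $G_j$ with $j \neq i$ and making no other change: removing $v_i$ undoes the vertex addition on $G_i$, and adding $h$ back restores the deleted hinge. By the remark after Theorem \ref{thm:prev_ops}, $S$ is independent. Now $R'_m \cup h$ equals $S$ plus a new vertex $v_i$ of degree $4$, so in any $3$-dimensional abstract rigidity matroid the rank of $R'_m \cup h$ is at most $|E(S)| + 3$, while $|E(R'_m \cup h)| = |E(S)| + 4$; hence $R'_m \cup h$ is dependent. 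Since $R'_m$ itself is independent, every circuit of $R'_m \cup h$ must contain $h$, so $h$ is implied. The same argument applies to each of the $m$ hinges.

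The main obstacle will be the nucleation-freeness step, particularly the control of rigid subgraphs spanning multiple links in a way that applies to every $3$-dimensional abstract rigidity matroid rather than just $\mathcal{R}_3$ or $\mathcal{M}_{K_5}$. A clean treatment likely requires either a $2$-thin-cover analysis in the spirit of Section \ref{sec:rings-of-roofs}, or a direct argument that hinges form flex-inducing separators, leveraging the nucleation-freeness of each $G_i$ minus its hinge edges and the $m \geq 7$ bound to handle the small cases.
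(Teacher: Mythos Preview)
Your independence argument is fine and matches the paper's. The other two parts each contain a genuine gap.

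\textbf{Nucleation-freeness.} Theorem~\ref{thm:prev_ops}(iii) has the standing hypothesis that the graph to which Henneberg-II is applied is already nucleation-free. But $R_m$ is \emph{not} nucleation-free whenever some $|V(G_i)|\ge 5$: each $G_i$ is rigid by hypothesis and is therefore itself a nucleation. The same obstruction persists at every intermediate stage: after performing Henneberg-II on $G_1,\dots,G_{i-1}$, link $i-1$ is $G_{i-1}-(a_{i-1},b_{i-1})+v_{i-1}$, i.e.\ a Henneberg-II of the rigid $G_{i-1}$, hence rigid, hence a nucleation. So Theorem~\ref{thm:prev_ops}(iii) never applies along your induction. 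Separately, your auxiliary claim that ``$G_i$ minus its hinge edges nucleation-free'' forces some non-hinge pair in $X_i$ to be a nonedge is false: take $G_i$ to be $K_4$ on $X_i$ with one extra vertex of degree~$3$; then $G_i$ is rigid, $G_i$ minus both hinges has only $7$ edges on $5$ vertices (hence nucleation-free), yet $X_i$ induces $K_4$. The paper avoids all this by arguing directly that each new link $G'_i$ is nucleation-free (case-splitting on whether a putative nucleation meets the new vertex in $3$ or $4$ edges, and in the degree-$4$ case reversing Henneberg-II to contradict independence of $G_i$), and only then invoking the ring lemma (Lemma~\ref{7-link-nucleation-free}) to rule out cross-link nucleations.

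\textbf{Implied hinges.} The step ``$R'_m\cup h = S + v_i$ with $\deg(v_i)=4$, hence $rank(R'_m\cup h)\le |E(S)|+3$'' is not valid: adding a degree-$4$ vertex to an independent \emph{flexible} graph can raise the rank by $4$ (e.g.\ add a vertex to two disjoint $K_4$'s). The bound $+3$ holds only when the four neighbours already span a rigid block (so that $K_{X_i}\subseteq CL(S)$), and for that you need the rigidity of $G_i$, which your argument never invokes. In effect your argument is circular: ``$rank(S+v_i)\le |E(S)|+3$'' is equivalent to ``$h$ is implied in $R'_m$''. The paper instead bounds $rank(R'_m\cup\{\text{all hinges}\})$ via the $2$-thin cover $\{V(G'_i)\}$ and Lemma~\ref{thm:2-thin-rank}; the rigidity of $G_i$ enters precisely to show $rank(G'_i\cup\{\text{both hinges}\})=|E(G'_i)|+1$, which makes the inclusion--exclusion count collapse to $|E(R'_m)|$.
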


\begin{remark}
    Theorem \ref{thm:henneberg-ii_ring} can be strengthened by replacing the rigidity condition on the links $G_i$, which we call \emph{seed graphs}  of the construction in Theorem \ref{thm:henneberg-ii_ring}, with a weaker seed graph condition:  if $G'_i$  is the resulting graph after the Henneberg-II extension  with  $e \in E(G_i) \setminus E(G'_i)$ being the deleted edge, then $e$ is an implied nonedge in $G'_i$.  
\end{remark}

 In order to state our main contributions,  we introduce the \emph{\SG/}  inductive construction. We then demonstrate that this operation preserves nucleation-freeness in a general case (Theorem \ref{thm:sg-nf}), independence in a special case (Theorem \ref{thm:drsg-ind}), and a particular set of implied nonedges in a slightly less general case (Theorem \ref{thm:sg-implied}).  
We define this inductive construction in three steps.  
\begin{figure}[htb]
    \centering
    \begin{subfigure}[t]{0.49\linewidth}
        \centering
        \includegraphics[width=0.7\textwidth]{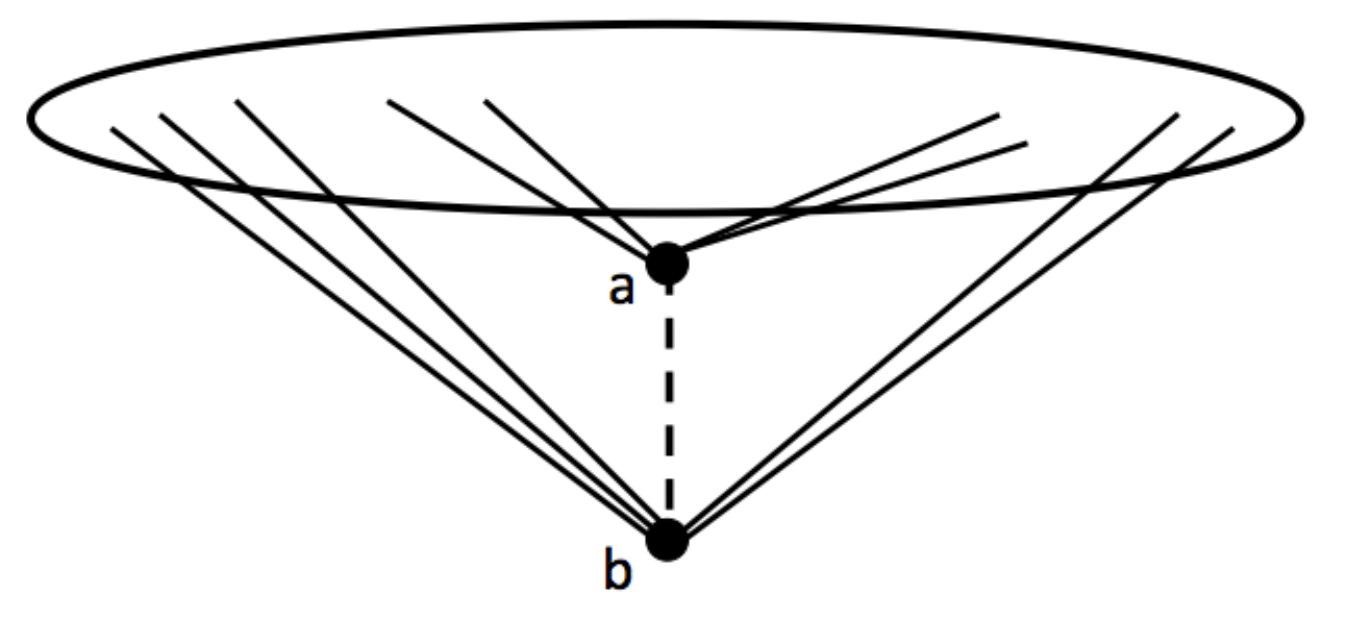}
    \end{subfigure}
    \begin{subfigure}[t]{0.49\linewidth}
        \centering
        \includegraphics[width=0.7\textwidth]{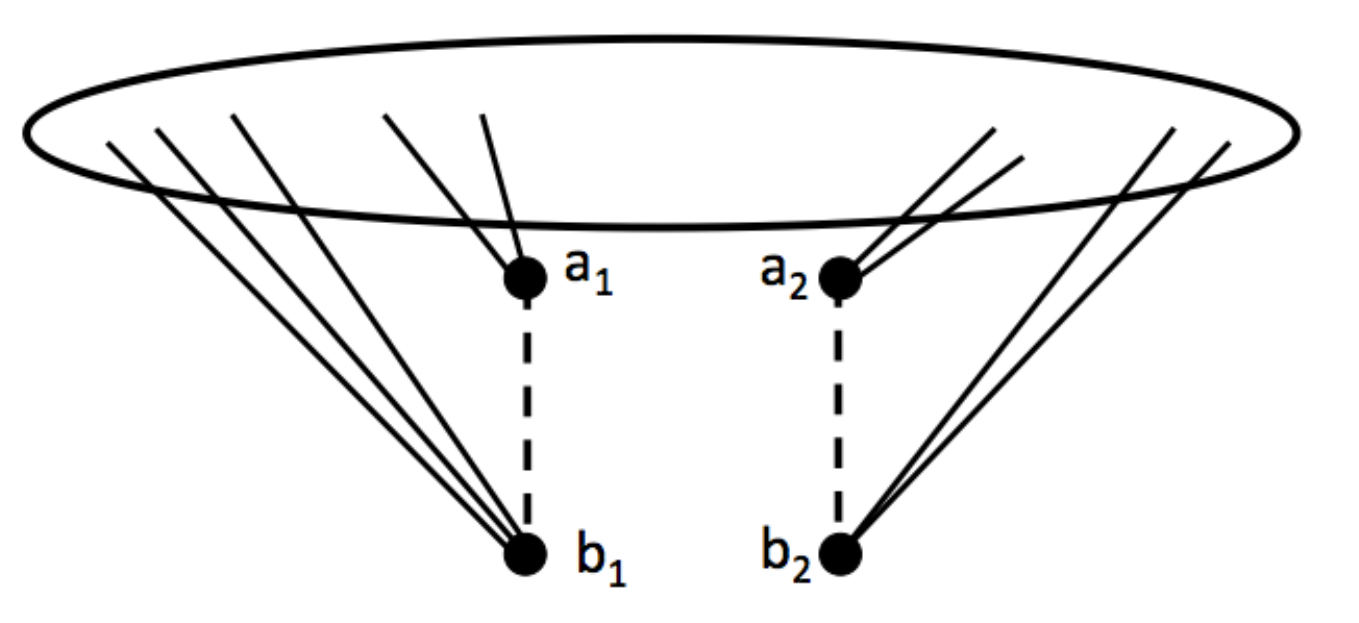}
    \end{subfigure}
    \caption{A nonedge-split operation defined in Section \ref{sec:contributions}.}
    \label{fig:splitting}
\end{figure}
\begin{figure}[htb]
    \centering
    \begin{subfigure}[t]{0.49\linewidth}
        \centering
        \begin{tikzpicture}[scale=0.75]
            \node[draw,circle,fill,inner sep=-1.5pt,label=left:{$a_1$}] (a1) at (0,0) {};

            \node[draw,circle,fill,inner sep=-1.5pt,label=right:{$b_1$}] (b1) at (4,0) {};

            \node[draw,circle,fill,inner sep=-1.5pt,label=left:{$v$}] (v) at (1,-2) {};

            \node[draw,circle,fill,inner sep=-1.5pt,label=right:{$u$}] (u) at (3,-2) {};

            \node[draw,circle,fill,inner sep=-1.5pt,label=left:{$a_2$}] (a2) at (0,-4) {};

            \node[draw,circle,fill,inner sep=-1.5pt,label=right:{$b_2$}] (b2) at (4,-4) {};

            \node[draw,circle,fill,inner sep=-1.5pt,label={[yshift=-0.55cm]$c$}] (c) at (2,-0.5) {};

            \node[draw,circle,fill,inner sep=-1.5pt,label={[xshift=0.03cm,yshift=0.05cm]$c'$}] (c') at (2,-3.5) {};

            \draw (a1) -- (v);
            \draw (a1) -- (u);

            \draw (b1) -- (v);
            \draw (b1) -- (u);

            \draw (a2) -- (v);
            \draw (a2) -- (u);

            \draw (b2) -- (v);
            \draw (b2) -- (u);

            \draw (c) -- (a1);
            \draw (c) -- (b1);
            \draw (c) -- (v);
            \draw (c) -- (u);

            \draw (c') -- (a2);
            \draw (c') -- (b2);
            \draw (c') -- (v);
            \draw (c') -- (u);

            \draw[dashed] (a1) -- (b1);
            \draw[dashed] (v) -- (u);
            \draw[dashed] (a2) -- (b2);
        \end{tikzpicture}
    \end{subfigure}
    \begin{subfigure}[t]{0.49\linewidth}
        \centering
        \includegraphics[width=0.9\textwidth]{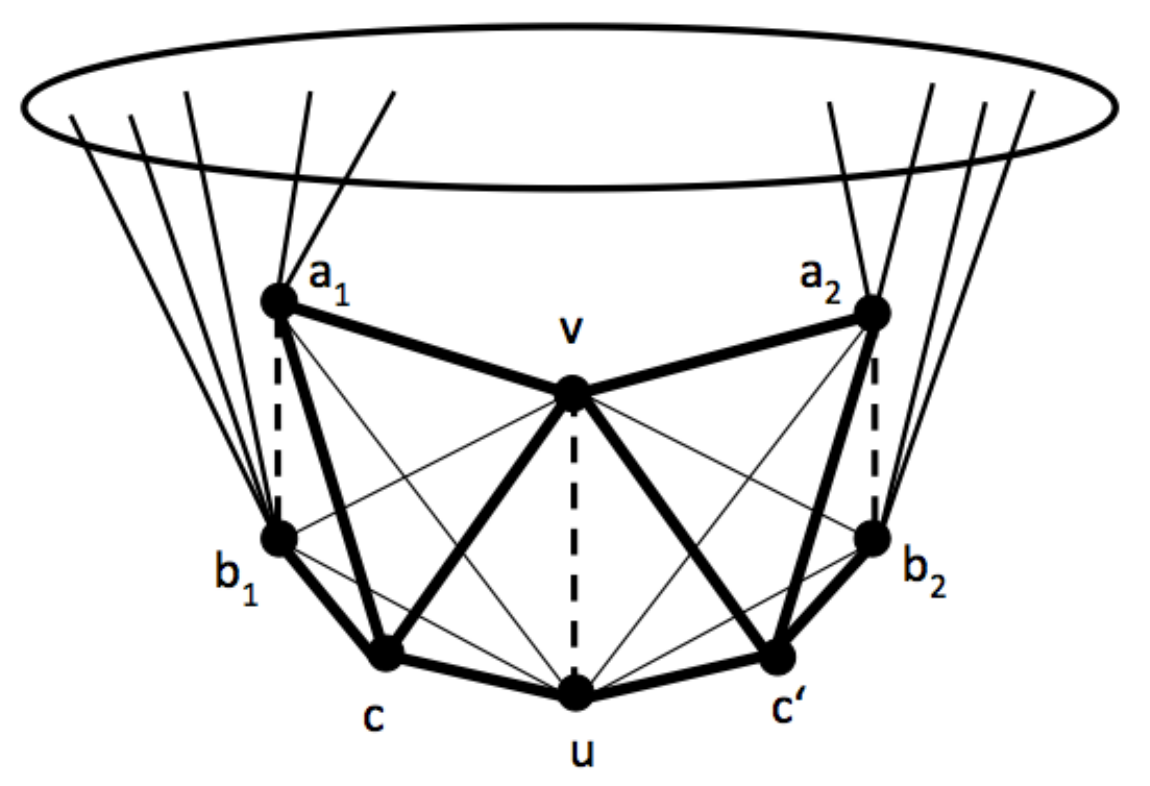}
    \end{subfigure}
    \caption{(Left) A double butterfly and (Right) a glue involving a double-butterfly.  
    See the glue and \SG/ inductive construction defined in Section \ref{sec:contributions} and the proof sketch of Theorem \ref{thm:drsg-ind} in Section \ref{sec:split-and-glue}.}
    \label{fig:twoRoofs}
\end{figure}
% \begin{figure}[htb]
%     \centering
%     \includegraphics[width=0.4\textwidth]{figs/roofAddition.pdf}
%     \caption{A glue involving a double-butterfly.}
%     \label{fig:butterflyAdd}
% \end{figure}
\begin{itemize}
    \item A \emph{nonedge-split} on a graph $G$ yields a \emph{split graph} $G^s$ by taking a nonedge $(a,b)$, called the \emph{split nonedge}, adding new vertices $a_1$ and $a_2$ whose neighborhoods form an arbitrary partition of the neighborhood of $a$ and new vertices $b_1$ and $b_2$ whose neighborhoods form an arbitrary partition of the neighborhood of $b$, and finally deleting $a$ and $b$.  
    See Figure \ref{fig:splitting}.
    
    \item A \emph{glue} on vertex-disjoint graphs $G$ and $H$ yields a graph $G:H$ by bijectively identifying some \emph{gluing vertices} of $G$ with some \emph{gluing vertices} of $H$.  
    See Figure \ref{fig:twoRoofs}.  
    
    \item A \emph{\SG/} on a \emph{base graph} $G$ and an \emph{ear} $H$ consists of a nonedge-split on $G$ yielding $G^s$ followed by a glue on $G^s$ and $H$ yielding the \emph{\SG/ graph} $G^s:H$ such that the gluing vertices of $G^s$ are a subset of those created by the nonedge-split.  
\end{itemize}
We also present two variants of this inductive construction.
\begin{itemize}
    \item A \emph{double-butterfly \SG/} is a \SG/ whose ear is a \emph{double-butterfly} as in Figure \ref{fig:twoRoofs}: two butterflies glued together on the nonedge $(u,v)$ whose gluing vertices are $a'_1$, $a'_2$, $b'_1$, and $b'_2$.  
    \item A \emph{safe \SG/} is a \SG/ with additional  conditions on  the constituent graphs and operations which, while fairly natural,  are too technical state here, but formally defined in Section \ref{sec:dra-implied}. In particular, these conditions imply that the base graph is independent and has an implied nonedge.  
\end{itemize}

\begin{remark}
    Although not proved here, we believe the graph obtained via a double-butterfly \SG/ on a graph $G$ is  not obtainable from $G$ via any sequence of the standard inductive constructions discussed in Theorem \ref{thm:prev_ops}.  
\end{remark}

\begin{remark}
    It may not always be possible to perform a safe \SG/ on a given base graph and ear;  however, it can be performed if natural conditions  (detailed in Section \ref{sec:dra-implied}) are satisfied by the  base graph  and ear (in particular, if the ear is a double butterfly).  
\end{remark}

\begin{theorem}[Nucleation-free preserving \SG/]
    \label{thm:sg-nf}
    If $G$ and $H$ are independent graphs (in fact, sparsity is sufficient), then the graph $G^s:H$ resulting from a \SG/ is nucleation-free provided $G^s$ and $H$ are nucleation-free.
\end{theorem}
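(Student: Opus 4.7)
The plan is to proceed by contradiction: suppose $N \subseteq G^s:H$ is a rigid subgraph on $n := |V(N)| \ge 5$ vertices. Let $N_1$ be the subgraph of $N$ whose edges lie in $G^s$ and $N_2$ the subgraph whose edges lie in $H$, so $E(N) = E(N_1) \sqcup E(N_2)$. Set $V_1 = V(N_1)$, $V_2 = V(N_2)$, and $V_0 = V_1 \cap V_2$. The construction forces $V_0$ to lie in the gluing set of $G^s$, which is itself a subset of $\{a_1, a_2, b_1, b_2\}$; in particular $|V_0| \le 4$.

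Next I would bound $r(N_1)$ and $r(N_2)$ in two complementary ways and take the minimum. Nucleation-freeness of $G^s$ gives $r(N_1) \le 3|V_1| - 7$ whenever $|V_1| \ge 5$, and the analogous bound holds for $N_2$ inside $H$. The $(3,6)$-sparsity of $G$, together with the structure of the nonedge-split, yields a second bound via \emph{contraction}: identifying $a_1$ with $a_2$ (when both lie in $V_1$) and $b_1$ with $b_2$ likewise produces a subgraph $N_1^\sim \subseteq G$ on $|V_1| - \alpha - \beta$ vertices, where $\alpha, \beta \in \{0, 1\}$ record whether each split pair is fully contained in $V_1$. No duplicate edges are created because neither $(a_1, a_2)$ nor $(b_1, b_2)$ lies in $E(G^s)$, so sparsity of $G$ gives $|E(N_1)| = |E(N_1^\sim)| \le 3|V_1| - 6 - 3(\alpha + \beta)$, hence the same upper bound on $r(N_1)$. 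Since $E(N_1) \cap E(N_2) = \emptyset$, submodularity of the matroid rank collapses to $r(N) \le r(N_1) + r(N_2)$; combining this with the rigidity equality $r(N) = 3n - 6$ and the identity $n = |V_1| + |V_2| - |V_0|$ forces the arithmetic constraint $|V_0| - (\alpha + \beta) \ge 2$.

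A small case analysis on $|V_0| \in \{2, 3, 4\}$ and on how $V_0$ meets the split pairs $\{a_1, a_2\}$ and $\{b_1, b_2\}$ then uses the combined rank bounds to derive a strict inequality contradicting $r(N) = 3n - 6$ in each case; disconnected or $1$-separated configurations ($|V_0| \le 1$) are ruled out immediately since such graphs cannot be $3$-rigid on $\ge 5$ vertices. The main obstacle is the configurations where the nucleation-free bound on one of $N_1, N_2$ is unavailable (because $|V_i| \le 4$) or weaker than the contraction bound; there one needs the extra slack provided by the fact that $(a, b)$ is a nonedge of $G$, which shaves one edge off the sparsity maximum on $N_1^\sim$ whenever both $a$ and $b$ survive the contraction, together with the observation that $r(N_2)$ is strictly below $3|V_2|-6$ unless $N_2$ spans a complete graph on the gluing vertices. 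Assembling all the cases closes the contradiction and establishes the theorem.
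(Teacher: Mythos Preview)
Your overall plan mirrors the paper's: both arguments pull the putative nucleation back to a subgraph of the original $G$ by undoing the split and then invoke $(3,6)$-sparsity of $G$. The paper does this in one step---defining a single contracted graph $N\subseteq G$ and checking $|E(N)|\ge 3|V(N)|-6$---whereas you separate the edges into $N_1,N_2$ and combine via rank subadditivity; the arithmetic is equivalent.

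There is, however, a genuine gap in your case analysis, precisely at the configuration you flag as the ``main obstacle.'' Take $|V_0|=4$ with $\alpha=\beta=1$ and $N_2=K_4$ on all four gluing vertices. Your two bounds give $r(N_1)\le 3|V_1|-12$ (contraction) and $r(N_2)=6=3|V_2|-6$ (since $|V_2|=4$), and these sum to exactly $3n-6$: no contradiction. The escape you propose---that the nonedge $(a,b)$ of $G$ ``shaves one edge off the sparsity maximum on $N_1^{\sim}$''---does not work. Sparsity of $G$ says only $|E(N_1^{\sim})|\le 3|V(N_1^{\sim})|-6$; the absence of the single pair $(a,b)$ gives no improvement on this bound (for instance $K_5\setminus(a,b)$ is sparse, contains $a,b$ as a nonedge, and meets the bound with equality). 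Nucleation-freeness of $G^s$ does not help either, since $|V_2|=4$ blocks the $3|V_2|-7$ bound on the $H$-side.

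Concretely: let $G=K_5\setminus(a,b)$ on $\{a,b,c,d,e\}$; split with $N(a_1)=\{c,d\}$, $N(a_2)=\{e\}$, $N(b_1)=\{c,d\}$, $N(b_2)=\{e\}$; take $H=K_4$ with all four vertices as gluing vertices. Then $G$ and $H$ are independent, $G^s$ and $H$ are nucleation-free, yet $G^s{:}H$ has $15$ edges on $7$ vertices and is generically rigid (it arises from the rigid graph $K_5\setminus(b_1,e)$ on $\{a_1,b_1,c,d,e\}$ by a Henneberg-I adding $b_2$ on $\{e,a_1,b_1\}$ followed by a Henneberg-II adding $a_2$ on $\{e,a_1,b_1,b_2\}$ and deleting $(a_1,e)$). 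So this boundary case cannot be closed by your method. The paper's own proof reaches the same nonstrict inequality $|E(N)|\ge 3|V(N)|-6$ and concludes ``then $G$ is dependent,'' which fails on this example for the same reason; an additional hypothesis on the ear (for instance, no edges among the gluing vertices---automatically true for the double butterfly) appears to be needed for the argument to go through.
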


We are now ready to state our  main contributions.
\begin{theorem}[Independence preserving double-butterfly \SG/]
    \label{thm:drsg-ind}
    A double-butterfly \SG/ whose base graph is independent yields an independent graph.  
\end{theorem}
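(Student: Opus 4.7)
The plan is to show that the rigidity matrix of $G^s:H$ has full row rank at a generic configuration, so that $G^s:H$ is independent. First, $G^s$ is independent: the nonedge-split decomposes as two $0$-vertex splits (one at $a$, one at $b$), which preserve independence by Theorem \ref{thm:prev_ops}(iv). The double-butterfly $H$ is also independent, because adding the edge $(u,v)$ turns $H$ into the $2$-sum along $(u,v)$ of two copies of $K_5$ minus one edge; this is independent by preservation of independence under $2$-sums together with the independence of each $K_5$ minus an edge, and then removing $(u,v)$ preserves independence.

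Partition the vertices into $V_1 := V(G^s)\setminus\{a_1,a_2,b_1,b_2\}$, $V_2 := \{a_1,a_2,b_1,b_2\}$, and $V_3 := \{u,v,c,c'\}$. Since edges of $G^s$ touch only $V_1\cup V_2$ and edges of $H$ only $V_2\cup V_3$, the rigidity matrix at a generic configuration has block form
\[ R = \begin{pmatrix} A_1 & A_2 & 0 \\ 0 & B_2 & B_3 \end{pmatrix}, \]
with rows indexed by $E(G^s)$ on top and $E(H)$ on bottom. A stress $\omega=(\omega_{G^s},\omega_H)$ is a left dependency of $R$; balance at $V_1$, $V_3$, and $V_2$ yields $\omega_{G^s}^T A_1 = 0$, $\omega_H^T B_3 = 0$, and $\omega_{G^s}^T A_2 + \omega_H^T B_2 = 0$. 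The left null space of $B_3$ has dimension $16-\mathrm{rank}(B_3)=4$ at a generic configuration, since each vertex of $V_3$ has at least three incident $H$-edges whose $V_3$-direction vectors span $\mathbb{R}^3$, giving rank $3$ per vertex of $V_3$ and hence rank $12$ for $B_3$. Each such $\omega_H$ is a partial stress on $H$ balanced at $V_3$, and its residual force at $V_2$ lies in a $4$-dimensional subspace $\mathcal{F}_H\subseteq\mathbb{R}^{12}$. Explicit generators of $\mathcal{F}_H$ come from the two $K_5$-circuits hidden in $H$: for each $i\in\{1,2\}$, the $i$-th butterfly together with its two nonedges $(a_i,b_i)$ and $(u,v)$ forms a $K_5$ carrying a unique (up to scalar) stress; restricting this stress to the eight butterfly edges balances at $c$ (resp.\ $c'$) while leaving residuals at $\{a_i,b_i,u,v\}$, and pairing the two butterflies' restrictions with appropriate scalars to cancel the residuals at $\{u,v\}$ yields partial stresses of $H$ with explicit, computable residuals at $V_2$.

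The main obstacle is to show that $\mathcal{F}_H$ meets the subspace $\mathcal{F}_{G^s}\subseteq\mathbb{R}^{12}$ (of residual forces at $V_2$ arising from partial stresses of $G^s$ balanced at $V_1$) only at the origin. I would argue this by a transversality/genericity argument: the generators of $\mathcal{F}_H$ depend polynomially and nondegenerately on the positions of $V_3$, and the condition $\mathcal{F}_H \cap \mathcal{F}_{G^s} \neq \{0\}$ is cut out by the vanishing of certain determinantal minors, which I would show are generically nonzero once $V_3$ is placed generically relative to the fixed positions of $V_2$ inherited from a generic independent configuration of $G^s$. Consequently $\omega_H = 0$, and then $\omega_{G^s}^T A_1 = 0$ combined with $\omega_{G^s}^T A_2 = 0$ gives $\omega_{G^s}^T [A_1\ A_2] = 0$, which by independence of $G^s$ forces $\omega_{G^s}=0$, completing the proof.
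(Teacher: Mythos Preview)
Your block decomposition and the reduction to comparing the residual-force spaces $\mathcal{F}_H$ and $\mathcal{F}_{G^s}$ is a reasonable setup, but the step you flag as ``the main obstacle'' is a genuine gap, not a routine genericity check. Both $\mathcal{F}_H$ and $\mathcal{F}_{G^s}$ lie in the $6$-dimensional space of self-equilibrated loads on the four gluing vertices (any partial stress has zero net force and torque, so its residual at $V_2$ satisfies six linear constraints). You correctly compute $\dim\mathcal{F}_H=4$, so for $\mathcal{F}_H\cap\mathcal{F}_{G^s}=\{0\}$ to hold for \emph{any} placement of $V_3$ you must first have $\dim\mathcal{F}_{G^s}\le 2$; otherwise the two subspaces meet nontrivially for dimension reasons and no genericity in $V_3$ can separate them. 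You never establish this bound, and it does not follow merely from the independence of $G^s$. In fact, the only way I see to get $\dim\mathcal{F}_{G^s}\le 1$ is to specialise to a configuration with $p_{a_1}=p_{a_2}$ and $p_{b_1}=p_{b_2}$, where partial stresses on $G^s$ balanced at $V_1$ coincide with partial stresses on $G$ balanced at $V(G)\setminus\{a,b\}$, whose residual space is at most $1$-dimensional since $G$ is independent; semicontinuity of rank then transfers this to the generic case. But that specialisation is exactly the paper's key idea.

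The paper's proof bypasses your transversality question entirely. It works directly at the coincident configuration $p_{a_1}=p_{a_2}=q_a$, $p_{b_1}=p_{b_2}=q_b$ (with $q$ a generic framework of $G$) and places the double-butterfly in a symmetric ``square-with-perpendicular-apex'' position. Two short lemmas then show by explicit calculation with the equilibrium equations at $c$, $c'$, $u$, $v$ that any nonzero self-stress of $G^s{:}H$ at this configuration collapses to a nonzero self-stress of $G(q)$, contradicting independence of $G$. Independence at the special configuration then implies independence generically. A minor side remark: a $0$-vertex split leaves an edge between the split pair, so a nonedge-split is not literally two $0$-vertex splits; this is harmless (delete the two extra edges afterwards), but worth stating.
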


\begin{theorem}[Implied nonedge preserving safe \SG/]
    \label{thm:sg-implied}
    If the graph $G^s:H$ resulting from a safe \SG/ is independent (a safe \SG/ ensures $G$ has an implied nonedge), then it has some implied nonedge.  
\end{theorem}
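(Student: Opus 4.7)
The plan is to exhibit an implied nonedge in $G^s:H$ by transporting the implied nonedge of $G$ (guaranteed by the safe conditions) through the nonedge-split and glue operations. Let $f$ denote this implied nonedge of $G$ and fix a circuit $C\subseteq G\cup f$ that contains $f$.

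First I would analyze how $f$ and $C$ behave under the nonedge-split on $(a,b)$. Edges of $C$ not incident to $\{a,b\}$ persist in $G^s$, while edges incident to $a$ (resp.\ $b$) are routed to $a_1$ or $a_2$ (resp.\ $b_1$ or $b_2$) according to the chosen neighborhood partition. My candidate implied nonedge $f'$ of $G^s:H$ is obtained by the natural lift of $f$: if $f$ is not incident to $\{a,b\}$ then $f' = f$; if $f$ is incident to $\{a,b\}$, say $f=(a,x)$, then $f' = (a_i,x)$ for the $i$ dictated by the partition; if $f$ is the split nonedge $(a,b)$ itself then $f'\in\{(a_i,b_j):i,j\in\{1,2\}\}$ chosen consistently with the partition. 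If no such direct lift produces an implied nonedge, then $f'$ is taken alternatively to be $(a_1,a_2)$, $(b_1,b_2)$, or a nonedge internal to $H$ made implied by the gluing. The safe conditions will be used precisely to ensure that at least one such candidate is available.

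Second, under the standing assumption that $G^s:H$ is independent, I would apply the rank-sandwich technique developed in Section \ref{sec:rings-of-roofs}. The goal is to build a 2-thin cover of $(G^s:H)\cup f'$ whose clusters consist of (a)~a subgraph of $G^s$ realizing the lift of the circuit $C$, (b)~the ear $H$, and (c)~small bridge cliques, forced by the safe conditions, that package the gluing vertices together with the split vertices $a_i,b_j$ into shared clusters with both $H$ and the lifted copy of $C$. The Jackson--Jordán style rank upper bound delivered by this cover should match $\mathrm{rank}(G^s:H)$ exactly, so $f'$ lies in the closure of $G^s:H$ and is therefore implied.

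The main obstacle will be the combinatorial subtlety of the nonedge-split partition. Because the neighborhood partitions of $a$ and $b$ are a priori arbitrary, the naive lift of $C$ to $G^s$ may fail to contain $f'$ in any circuit, and the lifted edge set may even become independent. The safe conditions should either force the partition to be coherent with $C$, or force an alternative candidate ($(a_1,a_2)$, $(b_1,b_2)$, or a nonedge internal to $H$) to become implied; carrying out this case analysis while consistently producing a 2-thin cover that witnesses the required rank identity is the crux of the proof.
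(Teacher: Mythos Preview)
Your approach has a genuine gap: you are trying to transport a specific circuit $C\subseteq G\cup f$ through the nonedge-split, but the safe conditions say nothing about any particular circuit, and as you already note, the lifted edge set may become independent under an arbitrary neighborhood partition. The case analysis you sketch (direct lift, then fallback to $(a_1,a_2)$, $(b_1,b_2)$, or an ear-internal nonedge) cannot be completed from the safe hypotheses alone: in particular $(a_1,a_2)$ and $(b_1,b_2)$ are \emph{not} in the shared set of the resulting cover (the safe split cover is defined precisely so that no cluster of size at least three contains $\{a_1,a_2\}$ or $\{b_1,b_2\}$), so there is no reason for those pairs to be implied.

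The paper's proof never touches a circuit. The safe base graph data already hands you a $2$-thin cover $\mathcal{X}$ of $G$ with $rank(G)=IE(G,\mathcal{X})$; the safe split and safe ear data hand you covers $\mathcal{X}^s$ and $\mathcal{X}^H$, and the construction assembles these into an independent $2$-thin cover $\mathcal{X}'$ of $G^s{:}H$. The entire argument is then a global bookkeeping identity: one shows (Lemmas~\ref{lem:edge-count} and~\ref{lem:ie-count}) that $rank(G^s{:}H)=IE(G^s{:}H,\mathcal{X}')$ by tracking how the $IE$-count changes under the split (the safe conditions are exactly what is needed to make this count balance). Once that equality holds, \emph{any} nonedge in $S(\mathcal{X}')$ is implied by the rank-sandwich; the witness the paper exhibits is a key gluing pair $(a_i,b_j)$, which is one of your ``lifts'' of the split nonedge $(a,b)$, but its implied status comes from the cover identity, not from lifting a circuit. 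So the right move is to drop the circuit $C$ entirely and work with $\mathcal{X}$, $\mathcal{X}^s$, $\mathcal{X}^H$, $\mathcal{X}'$ and their $IE$-counts.
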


Theorems \ref{thm:sg-nf}, \ref{thm:drsg-ind}, and \ref{thm:sg-implied} are proved in Sections \ref{sec:dra-ind} - \ref{sec:dra-implied}.  
Corollary \ref{prop:cons1}, below, is a consequence of Theorems \ref{thm:sg-nf} and \ref{thm:sg-implied} while Corollary \ref{prop:cons2}, also below, follows from all three of these theorems.  

\begin{corollary}[Nucleation-free and implied nonedge preserving \SG/]
    \label{prop:cons1}
    If the graph $G^s:H$ resulting from a safe \SG/ is independent and the split graph $G^s$ and ear $H$ are nucleation-free, then $G^s:H$ is nucleation-free and has some implied nonedge.  
    % If the graph resulting from a safe \SG/, whose split graph and ear satisfy Conditions (ii) and (iii) of Theorem \ref{thm:sg-nf}, is independent, then it is nucleation-free and has some implied nonedge.  
\end{corollary}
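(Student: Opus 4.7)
The plan is straightforward: Corollary \ref{prop:cons1} is essentially an immediate consequence of Theorems \ref{thm:sg-nf} and \ref{thm:sg-implied}, so the proof amounts to carefully matching the hypotheses of those two theorems to the assumptions of the corollary. I would organize the argument as two short applications, one for each conclusion (nucleation-freeness and the existence of an implied nonedge).

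First, to obtain the implied nonedge, I would invoke Theorem \ref{thm:sg-implied}. Its hypotheses require exactly that the \SG/ be safe and that the resulting graph $G^s:H$ be independent, both of which are assumed in the corollary. This immediately gives some implied nonedge of $G^s:H$. Next, to get nucleation-freeness, I would apply Theorem \ref{thm:sg-nf}, whose hypotheses are that the base graph $G$ and the ear $H$ be independent (sparsity suffices) and that $G^s$ and $H$ be nucleation-free. Nucleation-freeness of $G^s$ and $H$ is assumed. Independence of $G$ is part of the definition of a \emph{safe} \SG/, as explicitly noted in the paragraph following the definition (the safety conditions imply that the base graph is independent and has an implied nonedge). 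Independence of $H$ follows from the assumed independence of $G^s:H$: by definition of the glue operation, $H$ embeds as a subgraph of $G^s:H$ (with its gluing vertices identified with the corresponding vertices of $G^s$), and subgraphs of independent graphs are independent.

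Since there is no genuine obstacle, the two conclusions combine to give nucleation-freeness and the existence of an implied nonedge in $G^s:H$, which is exactly the statement of the corollary. The only subtlety worth flagging in the write-up is the verification that $H$ inherits independence from $G^s:H$; beyond that, the proof is purely a matter of assembling hypotheses, and the substance of the work has been done in Theorems \ref{thm:sg-nf} and \ref{thm:sg-implied} themselves.
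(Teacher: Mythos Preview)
Your proposal is correct and follows exactly the approach the paper indicates: the corollary is stated as an immediate consequence of Theorems \ref{thm:sg-nf} and \ref{thm:sg-implied}, and you have supplied precisely the hypothesis-matching the paper leaves implicit. The only extra detail you add beyond the paper's one-line attribution is the observation that $H$ is (isomorphic to) a subgraph of the independent graph $G^s:H$ and hence independent, which is a clean way to discharge that hypothesis of Theorem \ref{thm:sg-nf}.
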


\begin{corollary}[Independence, nucleation-free, implied nonedge preserving \SG/]
    \label{prop:cons2}
    A safe \SG/ whose split graph is nucleation-free and whose ear is a double-butterfly yields an independent nucleation-free graph that has some implied nonedge.  
\end{corollary}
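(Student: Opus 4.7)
The plan is to assemble Corollary \ref{prop:cons2} directly from the three main theorems (\ref{thm:sg-nf}, \ref{thm:drsg-ind}, \ref{thm:sg-implied}), chaining them together by verifying that the specific choice of ear (a double-butterfly) supplies the missing hypotheses that each theorem individually requires. Since the \SG/ is safe, the base graph $G$ is independent and carries an implied nonedge by the definition of safety; this will be the common starting point for all three applications.

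First, I would establish independence of $G^s:H$. Because the ear is a double-butterfly and $G$ is independent, Theorem \ref{thm:drsg-ind} applies verbatim and gives that $G^s:H$ is independent. Second, I would establish nucleation-freeness via Theorem \ref{thm:sg-nf}. The hypotheses of that theorem demand that both $G$ and $H$ be independent and that both $G^s$ and $H$ be nucleation-free. Independence of $G$ is given by safety; nucleation-freeness of $G^s$ is a hypothesis of the corollary. The remaining two conditions concern the fixed graph $H$, the double-butterfly. Independence of $H$ follows since a butterfly is a $K_5$ minus an edge (hence an independent subgraph of the circuit $K_5$), and two butterflies glued on a pair of non-adjacent vertices assemble into an independent graph. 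Nucleation-freeness of $H$ can be read off from a short edge count against the $3n-6$ rigidity threshold: each butterfly has only $8<9$ edges, the full double-butterfly has $16<18$ edges on $8$ vertices, and by removing any low-degree vertex one checks that no $6$- or $7$-vertex induced subgraph reaches its respective threshold either. Third, with independence of $G^s:H$ already in hand and safety preserved, Theorem \ref{thm:sg-implied} applies to produce the desired implied nonedge.

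The main obstacle, in the sense of where the genuine work sits, is not in this corollary itself but in the theorems it invokes — in particular Theorem \ref{thm:drsg-ind}, which is what forces one to specialize to a double-butterfly ear in order to carry independence across both the nonedge-split and the glue. Relative to that theorem, the only new verifications required here are the structural facts about the double-butterfly $H$ (independence and nucleation-freeness) needed to feed into Theorem \ref{thm:sg-nf}; both are settled by the short sparsity arguments and $K_5$-minus-an-edge observation outlined above. Once these are in place, the three conclusions (independent, nucleation-free, possesses an implied nonedge) follow in the order described, completing the proof of Corollary \ref{prop:cons2}.
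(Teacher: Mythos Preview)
Your approach is essentially the same as the paper's: the paper states that Corollary~\ref{prop:cons2} follows from Theorems~\ref{thm:sg-nf}, \ref{thm:drsg-ind}, and \ref{thm:sg-implied}, and you correctly chain these together, additionally spelling out why the double-butterfly is independent and nucleation-free (details the paper leaves implicit). One small slip: a butterfly is $K_5$ minus \emph{two} non-incident edges, not one; this does not affect your argument, since a subgraph of the independent graph $K_5\setminus e$ is still independent, and your sparsity and $2$-separator observations for nucleation-freeness go through unchanged.
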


Theorem \ref{thm:starter}, stated below and proved in Section \ref{sec:safe_base}, concerns \emph{safe base graphs}, which are graphs that can be used as base graphs in safe \SG/s.  

\begin{theorem}[Constructing safe base graphs]
    \label{thm:starter}
    The following inductive constructions yield a safe base graph:
    \begin{enumerate}[(i)]
        \item a safe double-butterfly \SG/, 
        \item a $k$-sum on a safe base graph and an independent graph, for any $k \geq 0$, and 
        \item a Henneberg-I extension on a safe base graph.
    \end{enumerate}
\end{theorem}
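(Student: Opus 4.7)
The plan is to handle each of the three inductive constructions separately, in each case verifying that the resulting graph satisfies all of the defining conditions of a safe base graph. Throughout, I would rely on the fact that, as noted informally after the definition of safe split-and-glue, a safe base graph is in particular independent and possesses at least one implied nonedge, together with some additional local conditions detailed in Section \ref{sec:dra-implied} that allow a subsequent safe split-and-glue to be performed at that implied nonedge.

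For case (i), the input is a safe base graph $G$ together with a double-butterfly ear $H$, and the output is $G^s:H$. I would first invoke Corollary \ref{prop:cons2} to conclude that $G^s:H$ is independent, nucleation-free, and has some implied nonedge. The remaining safety conditions should follow from the explicit combinatorics of the double-butterfly: it has four gluing vertices $a'_1, b'_1, a'_2, b'_2$, a controlled pair of internal nonedges $(a'_1,b'_1)$ and $(a'_2,b'_2)$, and admits the $2$-thin cover structure exhibited in the proof of Theorem \ref{thm:sg-implied}. Consequently, an implied nonedge lying inside the ear should satisfy the same local conditions used to define safety, so $G^s:H$ is again a safe base graph.

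For case (ii), let $G$ be a safe base graph and $G'$ an independent graph, and form their $k$-sum. By Theorem \ref{thm:prev_ops}(i) together with its Remark, independence and the implied nonedges of $G$ are both preserved, so in particular the $k$-sum has an implied nonedge. Next, I would argue that the safety conditions for $G$ are local: they are stated in terms of an implied nonedge $f$ and certain witnessing subgraphs sitting in a neighborhood of $f$ in $G$. Since the $k$-sum only identifies vertices along a $k$-clique and does not delete or subdivide any edge of $G$, these witnessing subgraphs embed unchanged into the $k$-sum, and the safety conditions transfer verbatim. Here I would need to check that the witnessing subgraphs can be chosen disjoint from the $k$-clique used to glue, which, since $G$ is assumed safe and hence has witnessing structure around a specific implied nonedge, should be arranged by choosing the gluing clique away from that nonedge.

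For case (iii), let $G''$ be obtained from a safe base graph $G$ via a Henneberg-I extension. By Theorem \ref{thm:prev_ops}(ii) and its Remark, $G''$ is independent and every implied nonedge of $G$ remains implied in $G''$. I would then observe that since a Henneberg-I extension only adds a single new vertex of degree three, the structure of $G$ in a neighborhood of any implied nonedge $f$ is preserved, so the witnessing subgraphs for the safety conditions carry through unaltered provided $f$ is chosen disjoint from the extension site. The main obstacle across all three cases is not independence or the existence of an implied nonedge, which follow quickly from Theorem \ref{thm:prev_ops} and Corollary \ref{prop:cons2}, but rather carefully unpacking the technical definition of a safe base graph in Section \ref{sec:dra-implied} and checking that the witnessing structure at the implied nonedge is preserved by each construction; once that definition is made explicit, each verification becomes a localized argument, and the three operations are selected precisely so that they do not disturb this structure.
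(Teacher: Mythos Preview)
Your proposal has a genuine gap: you treat the safety conditions as ``local witnessing structure'' around an implied nonedge, but the definition of a safe base graph in Section~\ref{sec:dra-implied} is not local. It requires an independent $2$-thin cover $\mathcal{X}$ of the \emph{entire} graph satisfying the global rank equality $rank(G)=IE(G,\mathcal{X})$, together with a nonedge in $S(\mathcal{X})$ admitting a safe nonedge-split. None of these are verified by exhibiting a neighborhood of an implied nonedge; in particular, $IE(G,\mathcal{X})$ is a sum over all clusters and all shared pairs, so one must explicitly build the new cover on the output graph and recompute both sides. The paper's proof does precisely this: for (i) it takes the induced cover $\mathcal{X}'$ of $G^s{:}H$, invokes Lemma~\ref{lem:sg-cover-ind} for independence of $\mathcal{X}'$, and then uses Lemmas~\ref{lem:edge-count} and~\ref{lem:ie-count} to obtain $rank(G^s{:}H)=IE(G^s{:}H,\mathcal{X}')$; for (ii) it sets $\mathcal{X}'=\mathcal{X}\cup I$ with $I$ the size-two clusters from the new edges, observes $S(\mathcal{X}')=S(\mathcal{X})$, and checks $rank(G')=rank(G)+|I|=IE(G,\mathcal{X})+|I|=IE(G',\mathcal{X}')$; for (iii) it extends $\mathcal{X}$ by either enlarging a cluster or adding three size-two clusters. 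Your appeal to ``witnessing subgraphs embedding unchanged'' does not establish any of these equalities.

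Two smaller issues compound this. In case (i) you invoke Corollary~\ref{prop:cons2}, which assumes the split graph is nucleation-free; Theorem~\ref{thm:starter}(i) has no such hypothesis, so you should use Theorem~\ref{thm:drsg-ind} directly for independence. In cases (ii) and (iii) you propose restricting the $k$-sum or the Henneberg-I base set to be ``away from'' the implied nonedge, but the theorem imposes no such restriction, and the paper's cover construction works uniformly regardless of where the operation is performed (because $S(\mathcal{X}')=S(\mathcal{X})$ in case (ii), and similarly in case (iii)).
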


\begin{corollary}[Inductive constructions yielding safe base graphs]
    Let $G$ be a nucleation-free safe base graph and let $G'$ be the resulting graph 
  after any of the  inductive constructions (i), (ii) or (iii)  in Theorem \ref{thm:starter} is applied. 
 Then $G'$ is a nucleation-free safe base graph after Operation (i); or  after Operations  (ii) or (iii)  provided they additionally satisfy the conditions of Theorem \ref{thm:prev_ops} (i) and (ii) respectively. 
\end{corollary}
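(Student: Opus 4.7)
The plan is to reduce the corollary to Theorem~\ref{thm:starter}, which already guarantees that $G'$ is a safe base graph after each of the three inductive constructions, and then to verify separately that nucleation-freeness is preserved under each operation.

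For Operation~(i), a safe double-butterfly \SG/, I would invoke Theorem~\ref{thm:sg-nf}, which yields nucleation-freeness of $G^s:H$ provided both the split graph $G^s$ and the ear $H$ are nucleation-free.  The ear $H$ is a double-butterfly: $8$ vertices and $16$ edges, below the generic rigidity bound $3\cdot 8-6=18$, and a direct edge count on each of its five-vertex subsets shows that none is rigid (the only five-vertex near-cliques it contains are its two constituent butterflies, each of which is $K_5$ minus two non-incident edges with only $8<9$ edges and therefore flexible).  To see that $G^s$ is nucleation-free, I would realise a nonedge-split on $(a,b)$ in $G$ as the composition of two $0$-vertex-splits---one on $a$ and one on $b$---each immediately followed by deletion of the single new edge introduced between the two resulting vertices; the desired partition of the neighborhood is produced by the choice of $i$ in the definition of a $k$-vertex-split.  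Theorem~\ref{thm:prev_ops}(iv) ensures that each $0$-vertex-split preserves independence and nucleation-freeness, while edge deletion preserves both trivially, since any rigid subgraph of a subgraph is a rigid subgraph of the larger graph.  Hence $G^s$ is nucleation-free, and Theorem~\ref{thm:sg-nf} completes Operation~(i).

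For Operations~(ii) and~(iii), the additional hypothesis that the conditions of Theorem~\ref{thm:prev_ops}(i) and~(ii) respectively are satisfied delivers nucleation-freeness directly: Theorem~\ref{thm:prev_ops}(i) states that a $k$-sum of two independent nucleation-free graphs is nucleation-free (subject to the $K_4$ avoidance condition when $k=3$), and Theorem~\ref{thm:prev_ops}(ii) states that a Henneberg-I extension on an independent nucleation-free $G$ is nucleation-free provided the base vertex set avoids any $K_4$ of $G$.  Combined with items~(ii) and~(iii) of Theorem~\ref{thm:starter}, this shows that $G'$ is a nucleation-free safe base graph in both cases.

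The main obstacle is identifying the right decomposition of the nonedge-split in Operation~(i) into operations whose effect on nucleation-freeness is already understood.  The $0$-vertex-split plus single-edge-deletion decomposition is natural once one observes that a $0$-vertex-split produces two vertices whose open neighborhoods are disjoint and which are connected only by the single new edge; deleting that edge reproduces precisely the effect of a nonedge-split on one endpoint.  Every other step in the argument is a direct appeal to a previously stated result.
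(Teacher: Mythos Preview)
Your argument is correct and mirrors the paper's approach: the paper's proof is the single sentence ``The corollary follows immediately from Theorems~\ref{thm:prev_ops}, \ref{thm:sg-nf}, and \ref{thm:starter},'' and you invoke exactly these three results in the same roles.  Where you add value is in making explicit two steps the paper leaves implicit: (a) that the double-butterfly ear is nucleation-free, and (b) that $G$ nucleation-free implies $G^s$ nucleation-free.  Your decomposition of the nonedge-split as two $0$-vertex-splits each followed by deletion of the new edge is a clean way to obtain (b) from Theorem~\ref{thm:prev_ops}(iv), and is not spelled out in the paper (the remark following the corollary simply takes for granted that nucleation-freeness of $G$ implies that of $G^s$).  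One small point: your edge-count argument for the double-butterfly only explicitly treats the $5$-vertex and $8$-vertex cases; to be complete you should note that the same count rules out rigid subgraphs on $6$ or $7$ vertices as well (removing any one or two vertices drops at least three edges each, so no induced subgraph reaches $3n-6$).
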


\begin{proof}
    The corollary follows immediately from Theorems \ref{thm:prev_ops}, \ref{thm:sg-nf}, and \ref{thm:starter}.
\end{proof}

\begin{remark}
    In the above corollary, if construction (i) is applied, we can weaken the condition that $G$ is nucleation-free  to the condition that the split graph $G^s$ is nucleation-free.
\end{remark}

Finally, Theorem \ref{thm:dependent}, stated below and proved in Section \ref{sec:dependent}, gives a straightforward method to combine nucleation-free graphs with implied nonedges to construct a larger nucleation-free graph with implied nonedges that is dependent.  
For any subgraph $H$ of a graph $G$ and any set $F$ of pairs of distinct vertices in $G$, we define the graph $H \cup F = (V(H), E(H) \cup F')$, where $F'$ is the subset of $F$ containing all pairs of vertices whose endpoints are in $V(H)$.  
We write $H \cup F$ as $H \cup f$ when $F = \{f\}$.  

\begin{theorem}[Constructing dependent nucleation-free graphs w/ implied nonedges]
    \label{thm:dependent}
    Let $G_1$ and $G_2$ be edge-disjoint graphs that share the endpoints of a nonedge $f$.  
    Then, $G_1 \cup G_2$ is
    \begin{enumerate}[(i)]
        \item nucleation-free if $G_1$ and $G_2$ are nucleation-free and vertex-disjoint except for the endpoints of $f$, 
        \item dependent if $f$ is implied in both $G_1$ and $G_2$, and
        \item a circuit if $G_1$ and $G_2$ are vertex-disjoint except for the endpoints of $f$ and $G_1 \cup f$ and $G_2 \cup f$ are circuits.  
    \end{enumerate}
    Additionally, $f$ is implied in $G_1 \cup G_2$ in Cases (ii) and (iii).  
   % If, additionally, $G \cup f$ and $H \cup f'$ are circuits, then $G$ is a circuit.  
\end{theorem}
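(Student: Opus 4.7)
The proof splits into three items. For (i), given $H \subseteq G_1\cup G_2$ on at least five vertices, write $H_i = H\cap G_i$; then $V(H_1)\cap V(H_2)\subseteq\{u,v\}$ (the endpoints of $f$) and $f\notin E(H)$. If $H\subseteq G_1$ or $H\subseteq G_2$, nucleation-freeness of $G_i$ applies directly. Otherwise, in the tight subcase $V(H_1)\cap V(H_2)=\{u,v\}$, I combine the abstract $3$D rigidity rank bound $r(E(K[X]))\le 3|X|-6$ with submodularity on $H_1\cup f$ and $H_2\cup f$ (whose matroid intersection is $\{f\}$, of rank $1$):
\[
r(H)\le r(H\cup f)\le r(H_1\cup f)+r(H_2\cup f)-1 \le 3|V(H)|-7<3|V(H)|-6,
\]
so $H$ is not rigid. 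The subcases of zero or one shared vertex produce stricter bounds. For (ii), the hypothesis gives circuits $C_i\subseteq G_i\cup f$ containing $f$, and since $C_1\cap C_2=\{f\}$, weak circuit elimination produces a circuit $C\subseteq(C_1\cup C_2)\setminus\{f\}\subseteq G_1\cup G_2$, so $G_1\cup G_2$ is dependent. That $f$ is implied in $G_1\cup G_2$ is immediate from $C_1\subseteq G_1\cup G_2\cup f$; the same argument supplies the implied-nonedge statement in (iii).

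For (iii), by (ii), $G_1\cup G_2$ is dependent, and it suffices to show $r(G_1\cup G_2)=|E(G_1)|+|E(G_2)|-1$. The upper bound is immediate from submodularity on $G_1\cup f$ and $G_2\cup f$ together with $f\in\mathrm{cl}(G_2)$:
\[
r(G_1\cup G_2)=r(G_1\cup G_2\cup f)\le r(G_1\cup f)+r(G_2\cup f)-1 = |E(G_1)|+|E(G_2)|-1.
\]
The matching lower bound---that $(G_1-e)\cup G_2$ is independent for every $e\in E(G_1)$---is the main obstacle. I argue by contradiction: a circuit $C'\subseteq(G_1-e)\cup G_2$ must intersect both $G_i$ (each independent), and the resulting sides $A:=C'\cap E(G_1)\subseteq G_1-e$ and $B:=C'\cap E(G_2)\subseteq G_2$ share at most $\{u,v\}$ as vertices; the $1$-vertex-cut subcase is ruled out by $2$-connectedness of rigidity circuits, leaving $V(A)\cap V(B)=\{u,v\}$. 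The central structural step is to show $f\in\mathrm{cl}(A)$, so that $A\cup f$ is dependent in the matroid; this is the abstract $3$D rigidity expression of the $2$-separation along the $2$-vertex cut (in the generic rigidity setting, it is the stress-imbalance-at-cut-vertices calculation forcing a compatible extension of the circuit-stress on $C'$ across $f$). Given $A\cup f\subseteq G_1\cup f$ dependent and $G_1\cup f$ a circuit, the only circuit contained in $A\cup f$ is $G_1\cup f$ itself, so $A\supseteq G_1$, contradicting $A\subseteq G_1-e$.

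Once the rank equality is established, $G_1\cup G_2$ has nullity exactly $1$ and contains a unique circuit $C^\ast$. Strong circuit elimination applied to the pair $G_1\cup f,\,G_2\cup f$ shows every edge of $G_1\cup G_2$ lies in some circuit contained in $G_1\cup G_2$, which by uniqueness must be $C^\ast$; hence $C^\ast=G_1\cup G_2$, completing (iii). The principal obstacle throughout is the $2$-separation step---showing $f\in\mathrm{cl}(A)$---which goes beyond the submodularity used for the upper bound and requires genuinely invoking $3$-dimensional structural properties of the abstract rigidity matroid.
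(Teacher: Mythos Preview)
Your arguments for (i), (ii), and the implied-nonedge claims are correct and essentially match the paper's (the paper phrases (i) via separators rather than explicit rank inequalities, but the content is the same).

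For (iii) your approach diverges from the paper's, and the step you yourself flag as the ``principal obstacle'' is a genuine gap. You assume a circuit $C'\subseteq (G_1-e)\cup G_2$, set $A=C'\cap E(G_1)$, $B=C'\cap E(G_2)$, and need $f\in\mathrm{cl}(A)$. The stress-imbalance calculation you sketch is valid in $\mathcal{R}_3$, but you merely assert that it has an ``abstract $3$D rigidity expression'' without supplying one. What \emph{does} follow easily from the $2$-sum-preserves-independence lemma is that $A\cup f$ and $B\cup f$ cannot both be independent; but this only forces one side, and in the surviving case $B\cup f$ dependent, $B=G_2$, $A\subsetneq G_1$, $A\cup f$ independent, you have no contradiction. (The assertion $f\in\mathrm{cl}(A)$ amounts to saying the matroid restricted to $K_{V(A)}\cup K_{V(B)}$ is the matroid $2$-sum of the two restrictions along $f$; this is true but is itself a structural lemma you would need to prove.)

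The paper sidesteps this entirely with a direct construction rather than a contradiction argument. For arbitrary $e_1\in E(G_1)$, $e_2\in E(G_2)$, the graphs $(G_1\cup f)\setminus e_1$ and $(G_2\cup f)\setminus e_2$ are independent (circuits minus an edge), so their $2$-sum along $f$, namely $G'=(G_1\setminus e_1)\cup(G_2\setminus e_2)\cup f$, is independent by Lemma~\ref{lem:abs}. Adding $e_2$ back creates a unique circuit in $G'\cup e_2$; since $G_2\cup f\subseteq G'\cup e_2$ is a circuit, it is that unique circuit, and it contains $f$. Hence $(G'\cup e_2)\setminus f=(G_1\setminus e_1)\cup G_2=G\setminus e_1$ is independent. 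This uses only the $2$-sum lemma and a single basis exchange, so it goes through in any abstract $3$-rigidity matroid without ever needing the $2$-separation closure statement you identify as the obstacle.
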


\section{Warm-up: rings of butterflies}
\label{sec:rings-of-roofs}

Here we prove Theorem \ref{thm:ring_of_roofs}.  
Lemma \ref{lem:ror-implied}, below, shows that each hinge in a ring of butterflies is implied, and we give two proofs using the flex-sign and the rank-sandwich techniques discussed in Section \ref{sec:contributions}.  

\begin{lemma}[Ring of butterflies has implied hinges]
    \label{lem:ror-implied}
    Each hinge of a ring of butterflies is implied.
\end{lemma}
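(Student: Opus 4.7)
The plan is to give a rank-sandwich argument establishing the stronger statement that the full set $F$ of hinge nonedges of $R_m$ can be added simultaneously without changing the rank, from which the lemma follows immediately. I will also indicate how the flex-sign technique gives an alternative proof special to the butterfly case.

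For the rank-sandwich approach, first note that each link $G_i$ of the ring is a butterfly, that is, $K_5$ minus the two non-incident edges which, in the ring, become the hinges $(a_i,b_i)$ and $(c_i,d_i)$. Consequently, adding all of $F$ back in turns every link into a copy of $K_5$, and the family of these $m$ completed links forms a natural cover of $R_m \cup F$; consecutive clusters meet in exactly the two hinge vertices they share and non-consecutive clusters are vertex disjoint, so the cover is $2$-thin. I will then invoke the rank-sandwich inequality for $2$-thin covers in $\mathcal{R}_3$ (and $\mathcal{M}_{K_5}$), using that $\text{rank}(K_5)=9$ and that each overlap is a pair of vertices, to obtain an upper bound on $\text{rank}(R_m \cup F)$. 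A direct edge count in $R_m$ (which, depending on $m$, is either sparse and independent, or else has rank bounded above by $|E(R_m)|$) gives a matching lower bound on $\text{rank}(R_m)$, and monotonicity forces $\text{rank}(R_m \cup F) = \text{rank}(R_m)$. Thus every edge of $F$ lies in the closure of $R_m$, which is precisely the conclusion that each hinge is implied.

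For the flex-sign alternative, I would choose a mirror-symmetric generic realization of a single butterfly so that its unique $1$-parameter internal flex opens and closes the two hinges at equal squared-length rates with a specific relative sign. In any infinitesimal flex of $R_m$, the rate of change of $|a_ib_i|^2$ then equals, up to a determined sign, the rate of change of $|c_id_i|^2$; these rates further coincide across the identification $(c_i,d_i)=(a_{i+1},b_{i+1})$. Composing the sign pattern once around the ring produces a self-consistency relation on the rate at the starting hinge that forces it to vanish, so each hinge length is preserved by every flex and each hinge is therefore implied.

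The main obstacle I expect is the precise bookkeeping in the rank-sandwich step: the overlap corrections must be attributed exactly once per shared hinge-vertex pair, including the wrap-around identification $(a_1,b_1)=(c_m,d_m)$, and the resulting arithmetic must align exactly with the edge count of $R_m$. The flex-sign route avoids this global count but transfers the difficulty to choosing a realization in which the sign of each butterfly's hinge-length derivative is unambiguously determined and consistent across the identifications, which is the step that genuinely uses the butterfly-specific geometry and does not generalize to arbitrary links.
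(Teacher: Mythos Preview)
Your rank-sandwich bookkeeping is correct: with the $m$ completed links as clusters, $IE(R_m\cup F,\mathcal{X})=9m-m=8m=|E(R_m)|$. The gap is in the sentence ``A direct edge count in $R_m$ \dots gives a matching lower bound on $\text{rank}(R_m)$.'' An edge count is always an \emph{upper} bound on rank; it is a lower bound only when $R_m$ is independent, and you never establish this. Sparsity for $m\geq 7$ is not enough in $\mathcal{R}_3$ (that is precisely why $3$-dimensional rigidity is hard). In the paper this step is Lemma~\ref{lem:ror-ind}, proved by realizing $R_m$ as Henneberg-II extensions of a ring of $K_4$'s and invoking either the recent Monks--Nixon result or panel-hinge theory to certify the $K_4$-ring independent; the paper explicitly remarks that no elementary shortcut is known. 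For $m\leq 5$ your argument fails outright: $|E(R_m)|=8m>9m-6=3|V(R_m)|-6$, so $R_m$ is dependent and $\text{rank}(R_m)<8m$. The paper handles small $m$ separately by observing that $R_m$ is rigid, whence every nonedge is trivially implied.

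Your flex-sign sketch also has a gap. If every butterfly is realized identically (``mirror-symmetric''), each contributes the same sign $\varepsilon$ relating its two hinge-length derivatives, and composing around the ring yields the consistency relation $\varepsilon^m=1$, which is vacuous for even $m$ (if $\varepsilon=-1$) or for all $m$ (if $\varepsilon=+1$). The paper avoids this by placing exactly one butterfly in convex or crossing position (a \emph{negative} triple, via Connelly) and the remaining $m-1$ in pseudo-triangular position (\emph{positive} triples, via Streinu), so the composed sign is $-1$ independently of $m$, forcing any flex that expands a hinge to also contract it.
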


It is easy to check that the ring is rigid when it has at most $6$ links, and hence the lemma is immediate in this case and only interesting when the ring has at least $7$ links.  
The first proof uses the flex-sign technique.  
Recall that a flex of a framework $G(p)$ is a vector $u = (u_1,\dots,u_n)$ in the right-kernel of its rigidity matrix, which can be viewed as an assignment of infinitesimal velocities $u_i$ to the points $p_i$.  

\begin{definition}[Expansive and contractive flexes]
    Let $G(p)$ be a framework with a flex $u$ an edge $(i,j)$ such that wlog $p_i$ is the origin and $p_j$ lies on the positive $x$-axis.  
    Also, let $x(p_i)$, $x(p_j)$, $x(u_i)$, and $x_(u_j)$ be the $x$-coordinates of these vectors.  
    Right-multiplying the rigidity matrix of $G(p)$ by $u$ yields  
    $$x(u_i)(x(p_i)-x(p_j)) - x(u_j)(x(p_i)-x(p_j))$$
    corresponding to the row for $(i,j)$.  
    We say $u$ \emph{expands} $(p_i,p_j)$ when this term is positive, and $u$ \emph{contracts} $(p_i,p_j)$ when this term is negative.  
    Since $x(p_i) = 0$, expansion and contraction occur exactly when $x(u_i)<x(u_j)$ and $x(u_i)>x(u_j)$, respectively.  
    We say $u$ expands (resp. contracts) a nonedge $f$ of $G$ if it expands (resp. contracts) the edge $f$ in $G \cup f$.  
    A triple $(G(p),f_1,f_2)$, where $f_1$ and $f_2$ are nonedges, is (i) \emph{positive} if any flex of $G(p)$ expands (resp. contracts) $f_1$ if and only if it expands (resp. contracts) $f_2$ and (ii) \emph{negative} if any flex of $G(p)$ expands (resp. contracts) $f_1$ if and only if it contracts (resp. expands) $f_2$.  
\end{definition}

For any graph $G$ whose vertex set is $\{1,\dots,n\}$, any subgraph $H$ whose vertex set is $\{v_1,\dots,v_m\}$, and any framework $G(p)$, let $H(p)$ be the framework containing $H$ and the point $(p_{v_1},\dots,p_{v_m})$.  

\begin{figure}[htb]
    \centering
    \includegraphics[width=0.6\textwidth]{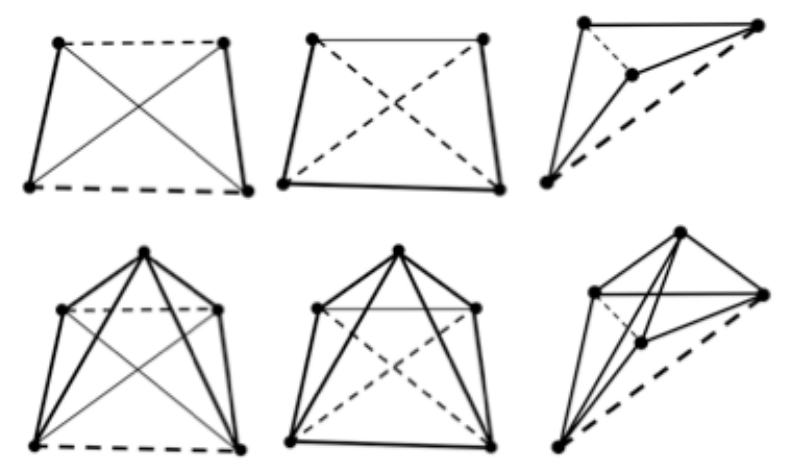}
    \caption{The top shows three frameworks of $K_4$ minus two edges in $\mathbb{R}^2$.  
    Together with the dashed nonedges, the first two are negative \cite{connelly:rigidityAndEnergy:1982} while the third is positive \cite{streinu:pseudoTriang:dcg:2005}.  
    It was shown in these papers that coning these frameworks, i.e. adding one new vertex connected to all other vertices, preserved these properties.  
    The bottom three frameworks from left to right are called a crossing, convex, and pseudo-triangular butterfly.  
    See the proof of Lemma \ref{lem:ror-implied} via flex-sign technique in Section \ref{sec:rings-of-roofs}.}
    \label{fig:convexPseudoPTbutterflies}
\end{figure}

\begin{proof}[Proof of Lemma \ref{lem:ror-implied} via flex-sign technique]
    Consider any hinge $f$ of a ring of butterflies $R_m = R(G_1,\dots,G_m)$.  
    Observe that there exists a framework $R_m(p)$ where the points of $G_1(p)$ are in strict convex position, i.e. either  a \emph{crossing butterfly} or a \emph{convex butterfly} in Figure \ref{fig:convexPseudoPTbutterflies}, while for all $i \geq 2$, the points of $G_i(p)$ are in strict concave position, i.e., a \emph{pseudo-triangular butterfly}) in Figure \ref{fig:convexPseudoPTbutterflies}.  
    Furthermore, there exists some neighborhood of $R_m(p)$ where these properties hold, and so we can assume $R_m(p)$ is generic.  
    Letting $f_i$ and $f'_i$ be the hinges of $G_i$, it was shown in \cite{connelly:rigidityAndEnergy:1982} that $(G_1(p),f_1,f'_1)$ is negative while it was shown in \cite{streinu:pseudoTriang:dcg:2005} that $(G_i(p),f_i,f'_i)$ is positive for all $i \geq 2$.  
    If $f$ is not implied, then $R_m(p)$ is flexible and, wlog, one of its flexes $u$ expands $f$.  
    However, following the signs of triples $(G_i(p),f_i,f'_i)$ around the ring shows that $u$ contracts $f$, which is a contradiction.  
    Thus, $f$ is implied,  proving the lemma.  
\end{proof}

\begin{figure}[htb]
    \centering
    \begin{subfigure}[t]{0.49\linewidth}
        \centering
        \includegraphics[width=0.7\textwidth]{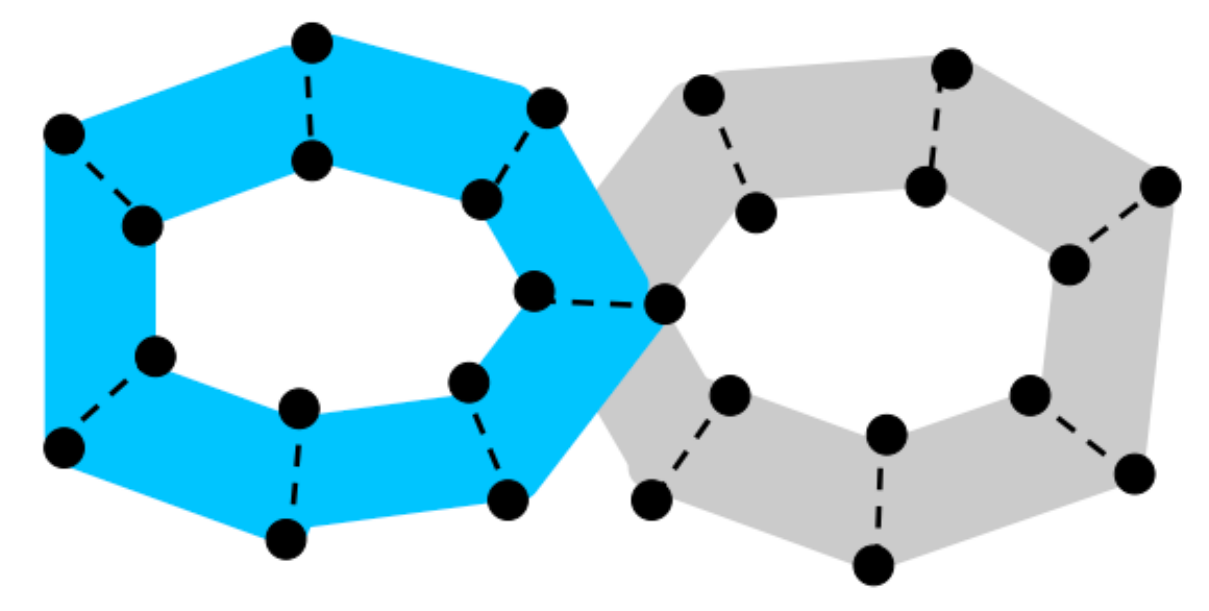}
    \end{subfigure}
    \begin{subfigure}[t]{0.49\linewidth}
        \centering
        \includegraphics[width=0.7\textwidth]{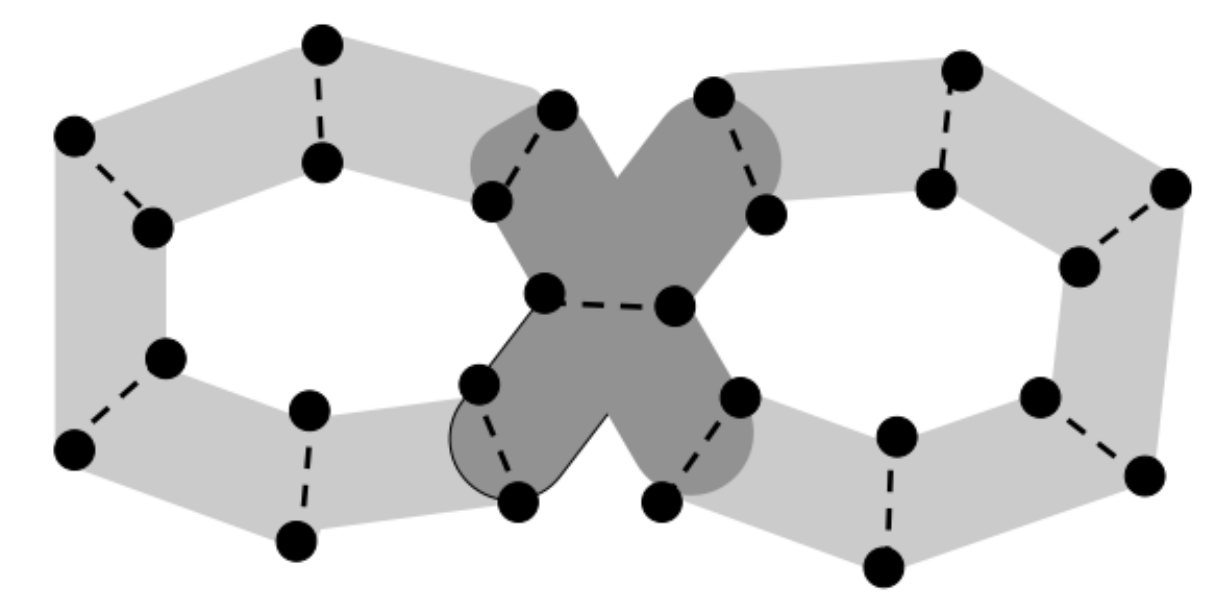}
    \end{subfigure}
    \caption{On the left and right is a graph $G$ consisting of two rings sharing a hinge edge.  
    The left illustrates a $2$-thin cover of $G$ consisting of two clusters, which are the vertex sets of the two rings.  
    The right illustrates another cover of $G$ where the vertex set in the dark-gray region is one cluster and the vertex sets of the links in the light-gray region are the other clusters.  
    Yet another cover of $G$ contains the vertex set of each link as a cluster.  
    See the definition of a $2$-thin cover in Section \ref{sec:contributions}.}
    \label{fig:twoCovers}
\end{figure}

The second proof uses the rank-sandwich technique, which shows that, for a graph $G$ and nonedge $f$, $|E(G)|$ is a lower-bound for $rank(G)$ and an upper-bound for  $rank(G \cup f)$.  
Hence, $rank(G) = rank(G \cup f)$,  hence  nonedge $f$ is implied in $G$.  
The lower-bound follows by showing that $G$ is independent.  
The upper-bound is demonstrated via the following result from \cite{jackson:jordan:rank3dRigidity:egres-05-09:2005}.  
A \emph{$2$-thin cover} of a graph $G$ is a collection $\mathcal{X} = \{X_1,\dots,X_k\}$ of subsets of $V(G)$, called \emph{clusters}, such that $|X_i| \geq 2$ for all $i \geq 1$, $|X_i \cap X_j| \leq 2$ and $X_i \not\subset X_j$ for all $i \neq j$, and both endpoints of each edge in $E(G)$ are contained in some $X_i$.  
See Figure \ref{fig:twoCovers} for examples.  
The \emph{shared set} $S(\mathcal{X})$ contains all pairs $(u,v)$ such that $\{u,v\} = X_i \cap X_j$ for some $i \neq j$.  
We call the 2-thin cover $\mathcal{X}$  and the shared set $S(\mathcal{X})$ \emph{independent} if the graph   $(V(S(\mathcal{X})), S(\mathcal{X}))$ is independent.  
For any edge or nonedge $e$ of $G$ and any collection $\mathcal{Y}$ of subsets of $V(G)$, $d(\mathcal{Y},e)$ is the number of sets in $\mathcal{Y}$ that contain $e$.  
For any subgraph $H$ of $G$ and any set $F$ of pairs of distinct vertices in $G$, we sometimes we write $H \cup F$ as $H_F$.  
For any subset $U \subseteq V(G)$, let $G[U]$ be the subgraph of $G$ induced by $U$.  
The \emph{inclusion-exclusion rank count} for the pair $(G,\mathcal{X})$ is $IE(G,\mathcal{X}) = \sum_{X_i \in \mathcal{X}} rank(G_{S(\mathcal{X})}[X_i]) - \sum_{e \in S(\mathcal{X})} (d(\mathcal{X},e) - 1)$.  

\begin{lemma}[\cite{jackson:jordan:rank3dRigidity:egres-05-09:2005}, Lemma 3.5.]
    \label{thm:2-thin-rank}
    If a graph $G$ has an independent $2$-thin cover $\mathcal{X} = \{X_1,\dots,X_k\}$, then $rank(G) \leq IE(G,\mathcal{X}).$
\end{lemma}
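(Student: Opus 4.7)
The plan is to prove $rank(G) \le IE(G, \mathcal{X})$ by first augmenting $G$ with the shared set to form $G^+ := G \cup S(\mathcal{X})$, and then bounding $rank(G^+)$ via an inclusion-exclusion over the cover. Since $G \subseteq G^+$, we have $rank(G) \le rank(G^+)$, so it suffices to establish $rank(G^+) \le IE(G, \mathcal{X})$.

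First I would verify that $\mathcal{X}$ provides an edge-cover of $G^+$: every edge of $G$ has both endpoints in some $X_i$ by the 2-thin cover property on $G$, and every pair in $S(\mathcal{X})$, now an edge of $G^+$, lies in at least two clusters by definition of the shared set. Then I would prove, by induction on the number of clusters, the inequality
\[ rank(G^+) \; \le \; \sum_i rank(G^+[X_i]) \; - \; \sum_{e \in S(\mathcal{X})} \big(d(\mathcal{X}, e) - 1\big), \]
using submodularity of the rank function of the rigidity matroid. The 2-thin condition $|X_i \cap X_j| \le 2$ is essential here: any pair of clusters shares at most one potential edge, so the only edges of $G^+$ that can appear in two induced subgraphs $G^+[X_i]$ and $G^+[X_j]$ are shared pairs. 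Each such pair $e$ contributes rank $1$ to each of the $d(\mathcal{X}, e)$ clusters containing it (since a single edge has rank $1$ in any 3-dimensional abstract rigidity matroid), yet contributes only $1$ to $rank(G^+)$, giving the correction term $d(\mathcal{X}, e) - 1$.

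The independence of the shared set plays two complementary roles. First, it ensures that no pair in $S(\mathcal{X})$ is matroid-dependent on the others, so each contributes a full unit of rank to $G^+$. Second, it guarantees that when submodularity is applied iteratively, the intersections $(\bigcup_{i < k} E(G^+[X_i])) \cap E(G^+[X_k])$ have rank exactly equal to the number of shared pairs they contain, and not more; otherwise, implied rigidity arising from the union of earlier clusters could inflate this intersection rank and turn the overcount correction into an over-subtraction.

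The main obstacle is formalizing the inductive submodular step: when absorbing a new cluster $X_k$ into the union of the previous $k - 1$ clusters, one must argue that the overlap $(E_1 \cup \cdots \cup E_{k-1}) \cap E_k$ has rank exactly the number of shared pairs contained in $X_k$ that also appear in some earlier $X_i$. Handling shared pairs with large $d(\mathcal{X}, e)$ requires careful bookkeeping so that each such pair is ``paid for'' exactly once by the time its overcount is accrued. The independence hypothesis is what makes this bookkeeping additive and exact, and this accounting is the technical heart of the proof.
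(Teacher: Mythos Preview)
Your submodularity-and-induction approach is correct, but the paper's argument is more direct: since $S(\mathcal{X})$ is independent, it extends to a basis $H$ of $G_{S(\mathcal{X})}$ (your $G^+$); then $rank(G) \le rank(G_{S(\mathcal{X})}) = |E(H)|$, and because the only edges of $H$ lying in two induced subgraphs $H[X_i]$, $H[X_j]$ are exactly the shared pairs (all of which are in $H$ by construction), ordinary inclusion--exclusion on edge counts gives $|E(H)| = \sum_i |E(H[X_i])| - \sum_{e \in S(\mathcal{X})}(d(\mathcal{X},e)-1) \le IE(G,\mathcal{X})$, the last step using $|E(H[X_i])| \le rank(G_{S(\mathcal{X})}[X_i])$. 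No induction, no submodularity---just counting edges of a well-chosen independent set. Your route trades that trick for standard matroid machinery, which works but is longer. One correction to your explanation: the danger with the intersections $I_k = (\bigcup_{i<k} E_i) \cap E_k$ is not that their rank could be \emph{inflated} by implied rigidity (matroid rank never exceeds cardinality, and $I_k$ is literally a subset of $S(\mathcal{X})$), but rather that without independence of $S(\mathcal{X})$ one could have $rank(I_k) < |I_k|$, so submodularity would subtract strictly less than $\sum_k |I_k| = \sum_e (d(\mathcal{X},e)-1)$ and deliver only a bound weaker than $IE(G,\mathcal{X})$.
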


We provide a proof here as a useful exercise and for completeness.

\begin{proof}
    Since $S(\mathcal{X})$ is independent, there exists a maximal independent subgraph $H$ of $G_{S(\mathcal{X})}$ that contains $S(\mathcal{X})$. Clearly $|E(H)| = rank(G_{S(\mathcal{X})})$.  
    Observe that\\
    $rank(G) \leq rank(G_{S(\mathcal{X})})$ (by definition of $G_{S(\mathcal{X})}$)  \\
    $=|E(H)| = \sum_{X_i \in \mathcal{X}} |E(H[X_i])| - \sum_{e \in S(\mathcal{X})} (d(\mathcal{X},e) - 1) $ \\
    $\le IE(G,\mathcal{X}),$
    since $|E(H[X_i])| \leq rank(G_{S(\mathcal{X})}[X_i])$.   
\end{proof}

% \begin{lemma}[Sufficiently large ring of $K_4$'s independent]
%     \label{lem:rot-ind}
%     A ring whose links are all $K_4$'s and that has at least $6$ links is independent.  
% \end{lemma}

% \begin{proof}
%   The proof follows by using body-hinge rigidity or by using 
%   \cite[Theorem 3.5]{JacksonJordansparse:2005}.
% \end{proof}

\begin{lemma}[Sufficiently large ring of butterflies independent]
    \label{lem:ror-ind}
    A ring of butterflies with at least $6$ links is independent.  
\end{lemma}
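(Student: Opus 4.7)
The plan is to sandwich $\operatorname{rank}(R_m)$ between matching upper and lower bounds and conclude $\operatorname{rank}(R_m) = 8m = |E(R_m)|$. For the upper bound I would apply the $2$-thin cover $\mathcal{X} = \{V(G_1),\ldots,V(G_m)\}$ just introduced: each cluster together with the shared hinges induces a $K_5$ of rank $9$, the shared set $S(\mathcal{X})$ is a perfect matching on the $2m$ hinge vertices (hence independent in any $3$-dimensional abstract rigidity matroid), and each hinge lies in exactly two clusters, so $IE(R_m,\mathcal{X}) = 9m - m = 8m$; Lemma~\ref{thm:2-thin-rank} then yields $\operatorname{rank}(R_m)\le 8m$.

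For the matching lower bound I would invoke Lemma~\ref{lem:ror-implied} via the flex-sign proof just given, which critically does not rely on independence. Since every hinge lies in the matroid closure of $R_m$, we have $\operatorname{rank}(R_m) = \operatorname{rank}(\overline{R_m})$, where $\overline{R_m}$ is the ring of $m$ copies of $K_5$ obtained from $R_m$ by inserting all $m$ hinge edges. It therefore suffices to show $\operatorname{rank}(\overline{R_m}) \ge 8m$, or equivalently, that the self-stress space of $\overline{R_m}$ has dimension at most $m$.

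To bound the stress dimension, I would first exhibit $m$ explicit independent self-stresses and then argue they span. For each $i$, let $S_i$ be the unique-up-to-scalar generic self-stress of $K_5^{(i)}$ extended by zero on all edges outside $K_5^{(i)}$: $S_i$ is a valid stress of $\overline{R_m}$ because the only vertices shared with other $K_5$'s are the hinge vertices of $K_5^{(i)}$, at which the ``foreign'' edges carry weight $0$. Linear independence of $S_1,\ldots,S_m$ is immediate by evaluating on any non-hinge edge of $K_5^{(i)}$, which lies in a single $K_5$. To see the $S_i$ span, take an arbitrary stress $\omega$ of $\overline{R_m}$: at each apex $w_i$ the four incident edges all lie inside $K_5^{(i)}$, so the stress equation at $w_i$ is a $3$-dimensional vector condition in $4$ unknowns, pinning the four apex-edge weights to a one-parameter family with parameter $\lambda_i$. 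The residual $\omega' = \omega - \sum_i \lambda_i S_i$ then vanishes on every apex edge, and propagating the stress equations at the hinge vertices (each a vector equation now involving only the five non-apex incident edges) around the ring forces $\omega' \equiv 0$ generically. This yields $\dim(\text{stress space of } \overline{R_m}) = m$ and $\operatorname{rank}(\overline{R_m}) = 9m - m = 8m$, completing the lower bound.

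The hard part is the propagation step for $\omega'$: one must show that the coupled cyclic linear system coming from the hinge-vertex stress equations (once all apex-edge weights have been killed) is non-singular in a generic framework, which amounts to verifying that a certain $m\times m$ circulant-like determinant does not vanish. A cleaner route that sidesteps this obstacle is to invoke the body-hinge rigidity theorem of Tay--Whiteley directly on $\overline{R_m}$, viewed as $m$ rigid bodies (the $K_5^{(i)}$) connected cyclically by $m$ two-vertex hinges; the standard $3$-dimensional body-hinge count produces $m - 6$ internal flexes (for $m \ge 6$) and therefore $\operatorname{rank}(\overline{R_m}) = 9m - 6 - (m - 6) = 8m$, giving the required bound without the explicit propagation.
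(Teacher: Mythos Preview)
Your approach is valid and genuinely different from the paper's. The paper reduces in the opposite direction: it observes that $R_m$ is obtained from a ring $R'_m$ of $K_4$'s by one Henneberg-II extension per link, then invokes the independence of $R'_m$ (via \cite{monks20255regulargraphs3dimensionalrigidity}, or the panel-hinge argument sketched after the lemma) together with the fact that Henneberg-II preserves independence. You instead \emph{enlarge} $R_m$ to the ring $\overline{R_m}$ of $K_5$'s using the flex-sign proof of Lemma~\ref{lem:ror-implied} (correctly noting that that proof nowhere uses independence), and then compute $\operatorname{rank}(\overline{R_m})$ via body-hinge.

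Two remarks. Your upper bound via the $2$-thin cover is correct but redundant; $\operatorname{rank}(R_m)\le|E(R_m)|=8m$ is automatic. More interestingly, in your explicit stress argument, once the apex-edge weights are killed the residual $\omega'$ is supported exactly on the subgraph of $\overline{R_m}$ induced by the $2m$ hinge vertices---which is the ring of $K_4$'s. So the ``hard propagation step'' you flag is literally the statement that this $K_4$ ring is independent, i.e., the paper's core ingredient; your first route circles back to the same fact rather than avoiding it. Only your body-hinge alternative on $\overline{R_m}$ genuinely sidesteps this, and that route is fine provided one checks (as the paper does for the $K_4$ panel-hinge version) that a generic bar-joint realization of $\overline{R_m}$ gives a generic body-hinge configuration so that Tay's count actually applies.
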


\begin{proof}
    Observe that a ring of butterflies $R_m$ with $m \geq 6$ can be obtained from a ring $R'_m$ of $K_4$'s by performing a Henneberg-II extension on each link.  
    $R'_m$ is independent, by \cite[Theorem 1.3]{monks20255regulargraphs3dimensionalrigidity}, and Henneberg-II extensions preserve independence, and thus $R_m$ is independent.  
\end{proof}

Lemma \ref{lem:ror-ind} is claimed without proof in \cite[Example 2]{jackson:jordan:rank3dRigidity:egres-05-09:2005}.  
We are unaware of an alternative proof that does not rely on the independence of a ring $R'_m$ of $K_4$'s with $m \geq 6$.  
Although we achieve this using a recent result from \cite{monks20255regulargraphs3dimensionalrigidity}, it can also be shown using the theory of body-hinge structures \cite{tay:rigidityMultigraphs-II:1989,tay1991linking,whiteley1988union} or panel-hinge structures (special position body-hinge structures) \cite{crapo1982statics} as follows.  
A bar-joint framework of $R'_m(p)$ in which the four points of each $K_4$ are coplanar can be regarded as a panel-hinge framework $R'_m(q)$, and then \cite[Proposition 3.4]{crapo1982statics} shows that the space of flexes of $R'_m(q)$ has dimension $m - 6$.  
This quantity lower-bounds the dimension of the space of flexes of $R'_m(p)$, and is in-fact equal to it since each $K_4$ is rigid.  
Therefore, the rank of $R'_m(p)$ is $3|V(R'_m)| - 6 - (m - 6) = 5m$.  
We also have $|E(R'_m)| = 5m$.  
Combining these equalities shows that $R'_m(p)$ is independent.  
Finally, since $R'_m(p)$ is not generic yet independent, $R'_m$ is independent.  

\begin{proof}[Proof of Lemma \ref{lem:ror-implied} via rank-sandwich technique]
    Consider any ring of butterflies $R_m$.  
    When $m \leq 5$, the lemma follows from the fact that $R_m$ is rigid, as discussed above.  
    When $m \geq 6$, consider any hinge $f$ of $R_m$.  
    Let $\mathcal{X}$ be the $2$-thin cover of $R_m$ whose clusters are the vertex sets of the links of $R_m$, which is also a $2$-thin cover of $R_m \cup f$.  
    Since no two pairs in the shared set $S(\mathcal{X})$ share a vertex, $\mathcal{X}$ is clearly independent, and so Lemma \ref{thm:2-thin-rank} shows that $rank(R_m \cup f) \leq IE(R_m,\mathcal{X}) = 8m$.  
    We also get $rank(R_m) = |E(R_m)|$ from Lemma \ref{lem:ror-ind}.
    Since $|E(R_m)| = 8m$ and $|E(R_m)| \leq rank(R_m \cup f)$, we see that $rank(R_m) = rank(R_m \cup f)$, and thus $f$ is implied in $R_m$.  
\end{proof}

In the above proof, we can alternatively obtain $rank(R_m \cup f) \leq 8m$ using a panel-hinge argument similar to the one following Lemma \ref{lem:ror-ind}.  
Let $R'_m = R_m \cup F$, where $F$ is the set of all hinges of $R_m$.  
Replacing $K_4$ with $K_5$ in the above panel-hinge argument shows that the rank of $R'_m$ is at most $3|V(R'_m)| - 6 - (m - 6) = 8m$.  
Thus, we get $rank(R_m \cup f) \leq 8m$.

We now prove Theorem \ref{thm:ring_of_roofs}.   

\begin{proof}[Proof of Theorem \ref{thm:ring_of_roofs}]
    Consider any ring of butterflies $R_m$.  
    For $m \geq 7$, it is easy to see that $R_m$ is nucleation-free by inspecting each of its subgraphs.  
    For $m \geq 6$, Lemma \ref{lem:ror-ind} shows that $R_m$ is independent, and hence $rank(R_m) = |E(R_m)|$.  
    Furthermore, $3|V(R_m)| - 6 - |E(R_m)|$ is the number of independent flexes $R_m$ has, which counting shows is $m - 6$.  
    For $m \leq 5$, $R_m$ is dependent since $|E(R_m)| > 3|V(R_m)| - 6$.  
    Lastly, each hinge of $R_m$ is implied for $m \geq 7$ by Lemma \ref{lem:ror-implied}.  
    For $m \leq 6$, it suffices to show that $R_m$ is rigid.  
    From the panel-hinge argument following Lemma \ref{lem:ror-ind}, we see that a ring $R'_m$ of $K_4$'s is rigid if and only if $m \leq 6$.  
    Thus, since $R_m$ can be obtained from $R'_m$ via Henneberg-II extensions, which preserve rigidity, $R_m$ is rigid.  
\end{proof}

\begin{remark}
    Figure \ref{fig:nontrivialSG} in Section \ref{sec:dra-implied} shows an independent nucleation-free graph with implies nonedges that is even smaller than a ring of butterflies with $7$ links, and the techniques in this section can be used to verify these properties.  
\end{remark}

\section{Constructions preserving independence, nucleation-freeness, and implied nonedges}
\label{sec:induct}

Here we prove Theorems \ref{thm:prev_ops} and \ref{thm:henneberg-ii_ring}.  
As stated in the introduction, many of our results hold for abstract $dF$-rigidity matroids, using  tools such as the following Lemma \ref{lem:abs}.

\begin{lemma}[$k$-sums preserve independence in abstract rigidity matroids]
    \label{lem:abs}
    Let $G$ and $H$ be bases for abstract $d$-rigidity matroids on $K_{V(G)}$ and $K_{V(H)}$, respectively.  
    If $E(G) \cap E(H)$ is a basis for the abstract $d$-rigidity matroid on $K_{V(G) \cap V(H)}$, then $G \cup H$ is a basis for the abstract $d$-rigidity matroid on $K_{V(G) \cup V(H)}$.  
    Consequently, for any $k \leq d$, $k$-sums preserve independence in any abstract $d$-rigidity matroid.  
\end{lemma}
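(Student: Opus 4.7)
The plan is to combine inclusion--exclusion on edge and vertex counts with the abstract $d$-rigidity matroid axioms that govern rank behavior on overlapping complete subgraphs. Write $A = V(G)$, $B = V(H)$, $C = A \cap B$, and let $r$ denote the rank function of the ambient abstract $d$-rigidity matroid on $E(K_{A \cup B})$.

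First, I would translate the three basis hypotheses into rank equalities: $|E(G)| = r(K_A)$, $|E(H)| = r(K_B)$, and $|E(G) \cap E(H)| = r(K_C)$. Inclusion--exclusion on edges then yields $|E(G \cup H)| = r(K_A) + r(K_B) - r(K_C)$. A short computation using the abstract $d$-rigidity rank formula $r(K_X) = d|X| - \binom{d+1}{2}$ for $|X| \geq d+1$ (and $\binom{|X|}{2}$ otherwise), together with $|A \cup B| = |A| + |B| - |C|$, shows that the right-hand side equals $r(K_{A \cup B})$ when $|C| \geq d$ --- a condition implicit in the basis hypothesis on $E(G) \cap E(H)$. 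Hence $|E(G \cup H)| = r(K_{A \cup B})$.

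Next, I would show that $E(G) \cup E(H)$ spans $K_{A \cup B}$ in the matroid. Since $G$ and $H$ are bases of the restrictions to $E(K_A)$ and $E(K_B)$, their closures in the ambient matroid satisfy $\mathrm{cl}(E(G)) \supseteq E(K_A)$ and $\mathrm{cl}(E(H)) \supseteq E(K_B)$, so $\mathrm{cl}(E(G) \cup E(H)) \supseteq E(K_A) \cup E(K_B)$. Invoking the abstract $d$-rigidity gluing axiom --- that $\mathrm{cl}(E(K_A) \cup E(K_B)) \supseteq E(K_{A \cup B})$ whenever $|C| \geq d$ --- and the idempotence of closure, I conclude $\mathrm{cl}(E(G) \cup E(H)) \supseteq E(K_{A \cup B})$. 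Thus $r(E(G) \cup E(H)) = r(K_{A \cup B}) = |E(G \cup H)|$, so $G \cup H$ is simultaneously independent and spanning, i.e., a basis of $K_{A \cup B}$.

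For the consequence on $k$-sums with $k \leq d$: when $k = d$, two independent graphs $G_1, G_2$ sharing $K_d$ can be extended to bases $G_1', G_2'$ of their respective complete graphs, chosen so that $E(G_1') \cap E(G_2') = E(K_d)$ (a basis of $K_d$, since $K_d$ is independent in any abstract $d$-rigidity matroid); the main statement applied to $G_1', G_2'$ gives $G_1' \cup G_2'$ a basis, hence $G_1 \cup G_2$ independent as a subset. For $k < d$, the overlap is too small for the main statement to apply directly, but independence still follows from a direct rank argument using the near-disjointness of the two graphs. The main obstacle is the spanning step in the main statement: matroid submodularity alone gives only the trivial upper bound $r(E(G) \cup E(H)) \leq |E(G) \cup E(H)|$, and the reverse inequality --- equivalently, the modularity of the pair $(E(G), E(H))$ when their vertex overlap carries a basis of its complete graph --- is a defining structural feature of abstract $d$-rigidity matroids that does not hold in general matroids and must be invoked via the gluing axiom.
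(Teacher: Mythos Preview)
Your argument for the main statement is correct but proceeds along a genuinely different line from the paper. The paper builds the basis constructively: it uses $0$-extensions (citing \cite{nguyen2010abstract}) to extend $E(G)\cap E(H)$ to a basis of $K_{V(H)}$ in a way that simultaneously extends $G$ to a basis $G'$ of $K_{V(G)\cup V(H)}$, and then performs basis exchanges inside $V(H)$ to swap edges of $G'[V(H)]$ for edges of $H$, using only the general matroid fact that the fundamental circuit of an edge already spanned by $G'[V(H)]$ lies within $V(H)$. Your route is instead a rank-count-plus-spanning argument: you verify $|E(G\cup H)| = r(K_{A\cup B})$ by inclusion--exclusion and the abstract rank formula, and then establish spanning via the gluing axiom. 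Each approach has its merits: the paper's is self-contained from $0$-extensions and avoids appealing to the rank formula or the gluing axiom explicitly, while yours is shorter and makes the role of the defining axioms of abstract $d$-rigidity completely transparent.

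One inaccuracy: your claim that $|C|\geq d$ is ``implicit in the basis hypothesis on $E(G)\cap E(H)$'' is not correct. For $|C|<d$ the complete graph $K_C$ is independent, so $E(G)\cap E(H)=E(K_C)$ satisfies the basis hypothesis, yet the conclusion of the lemma fails (the edge count falls short of $r(K_{A\cup B})$). The condition $|C|\geq d$ is a genuine assumption that both your proof and the paper's need---the paper's $0$-extension step also silently requires $d$ existing vertices in $C$ to get started. This does not break your argument for the intended case, but you should state $|C|\geq d$ as a hypothesis rather than claim it is forced. Your handling of the $k$-sum consequence for $k<d$ is also left as a sketch (``direct rank argument using near-disjointness''); the paper is no more explicit there, but a complete proof would invoke the other abstract-rigidity axiom governing closures of sets with small vertex overlap.
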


\begin{proof}
    Since $0$-extensions construct bases of abstract $d$-rigidity matroids \cite[Lemma 2.1]{nguyen2010abstract}, they can be used to extend 
    $E(H)\cap E(G)$  to a basis of $K_{V(H)}$ which also extends $G$ to a basis $G'$ of  $K_{V(H) \cup V(G)}$. I.e.,   the subgraph of $G'$ induced by $V(H)$ is a basis of $K_{V(H)}$.   
    We  obtain $G \cup H$ from $G'$ by a series of exchanges: add an edge of $H$ to $G'$ to get a unique circuit $C$ whose vertex set is guaranteed to be a subset of $V(H)$ and delete an edge in $C$ that is not an edge in $H$ to get another basis of $K_{V(G) \cup V(H)}$.  
    If at some point the circuit $C$ contains only edges of $H$, then $H$ is dependent, which is a contradiction.  
    Therefore, $G'$ is a basis of $K_{V(G) \cup V(H)}$.  
\end{proof}

\begin{proof}[Proof of Theorem \ref{thm:prev_ops}]
    For (i), independence of $G'$ follows immediately from Lemma \ref{lem:abs}.  
    Next, for all $k$, since for any nonedge $f \in F_1 \cup F_2$, any subgraph of $G \cup f$ or $H \cup f$ that is a circuit is also a subgraph of $G' \cup f$, $f$ is implied in $G'$.  
    Lastly, if $G'$ has a nucleation $N$, then since $G$ and $H$ are nucleation-free, the base complete graphs $C$ and $C'$ for the $k$-sum are $K_3$'s and $N$ is the union of a $K_4$ subgraph of $G$ that contains $C$ and a $K_4$ subgraph of $H$ that contains $C'$.  
    However, by assumption, either $C$ or $C'$ is not contained in any $K_4$ subgraph of $G$ or $H$, and so $G'$ is nucleation-free.  

    For (ii), let $G'$ be the graph resulting from a Henneberg-I extension on $G$ such that the base vertex set is not contained in any $K_4$ subgraph of $G$.  
    A similar argument shows that $G'$ is independent and each nonedge in $F_1$ is implied in $G'$.  
    Our assumption on $G$ and the base vertex set $W$ of this operation implies that any subgraph $H$ of $G$ that properly contains $W$ is independent and flexible.  
    Hence, we have $rank(H) = |E(H)| < 3|V(H)| - 6$.  
    Since Henneberg-I extensions preserve the number of independent flexes a graph has, the subgraph of $G'$ obtained via a Henneberg-I extension on $H$ is independent and flexible.  
    Using the above fact, we see that any subgraph of $G'$ on at least $5$ vertices is flexible, and so $G'$ is nucleation-free.  
    
    For (iii) and (iv), a similar argument shows that a Henneberg-II extension or a $k$-vertex split, for any $k \in \{0,1,2\}$, on $G$ yields an independent nucleation-free graph.  
    Finally, in the case of a Henneberg-II extension, we argue that each nonedge $f \in F_1$ is implied in $G'$ if the edge $e$ deleted by this operation is implied in $G'$.  
    If the circuit of $G \cup f$ containing $f$ is a subgraph of $G' \cup f$, then this is immediate.  
    Otherwise, the existence of a circuit in $G' \cup e$ that contains $e$ implies the existence of a circuit in $G' \cup f$ that contains $f$.  
    This completes the proof.  
\end{proof}

The proof of Theorem \ref{thm:henneberg-ii_ring} requires Lemmas \ref{lem:7-link-flexible} and \ref{7-link-nucleation-free}, below.  

\begin{lemma}[Link-spanning subgraphs of sufficiently large rings are flexible]
    \label{lem:7-link-flexible}
    For any ring with at least $7$ links, any of its subgraphs that is not contained in any link and not separated by any hinge is flexible.  
\end{lemma}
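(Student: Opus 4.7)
The strategy is to apply the $2$-thin cover rank bound of Lemma~\ref{thm:2-thin-rank} to show $rank(H) < 3|V(H)| - 6$, which by definition is flexibility. We may assume $H$ is connected, since a disconnected bar-joint framework in $3$-space trivially has rank at most $3|V(H)| - 12 < 3|V(H)| - 6$ (two components each contribute a six-dimensional space of rigid motions).

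\textbf{Setting up the cover.} Define $\mathcal{X} = \{X_i = V(H) \cap V(G_i) : |X_i| \geq 2\}$. Since consecutive links share only the two vertices of their common hinge and non-consecutive links share no vertices, every pairwise intersection has at most $2$ vertices, so $\mathcal{X}$ is $2$-thin. The shared set $S(\mathcal{X})$ is a subset of the $m$ hinges of the ring; these hinges are pairwise vertex-disjoint (the four hinge vertices $a_i, b_i, c_i, d_i$ of each link are distinct), so $S(\mathcal{X})$ is a matching and hence independent. The hypothesis ``$H$ is not separated by any hinge'', combined with the connectedness of $H$ and the structural fact that the only inter-link edges of $R_m$ pass through hinge vertices, guarantees that $H$ uses the full cyclic topology of the ring: in particular, every link has a substantial intersection with $V(H)$ (size at least $4$) and every hinge has both endpoints in $V(H)$, so $\mathcal{X}$ consists of exactly $m$ clusters and $|S(\mathcal{X})| = m$.

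\textbf{The rank count.} By Lemma~\ref{thm:2-thin-rank},
\begin{equation*}
rank(H) \;\leq\; IE(H,\mathcal{X}) \;=\; \sum_{i=1}^{m} rank\bigl(H_{S(\mathcal{X})}[X_i]\bigr) \;-\; \sum_{e \in S(\mathcal{X})}\bigl(d(\mathcal{X},e) - 1\bigr).
\end{equation*}
Each cluster term is at most $3|X_i|-6$ (valid since $|X_i|\geq 4 \geq 3$), each of the $m$ shared pairs lies in exactly two clusters, and each of the $2m$ hinge vertices of the ring is counted twice in $\sum |X_i|$, giving $\sum |X_i| = |V(H)| + 2m$. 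Assembling the pieces,
\begin{equation*}
IE(H,\mathcal{X}) \;\leq\; \bigl(3(|V(H)| + 2m) - 6m\bigr) - m \;=\; 3|V(H)| - m \;\leq\; 3|V(H)| - 7 \;<\; 3|V(H)| - 6
\end{equation*}
for $m \geq 7$, so $rank(H) < 3|V(H)| - 6$ and $H$ is flexible.

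\textbf{Main obstacle.} The main nontrivial step is the structural claim in the cover setup: that ``not separated by any hinge'' (together with connectedness and the condition of not being contained in a link) forces $V(H)$ to contain all $2m$ hinge vertices and each cluster to have size at least $4$. For subgraphs that skip a hinge entirely yet still wrap cyclically around the ring through the remaining links, a refined count distinguishing ``full'', ``half'' and ``empty'' hinges (contributing $5$, $3$, and $0$ respectively to the slack in the inclusion-exclusion bound) still yields $IE(H,\mathcal{X}) \leq 3|V(H)| - m$, so the same conclusion holds in full generality; translating the graph-theoretic non-separation hypothesis into this quantitative statement about the cover is the crux of the argument, after which the counting is mechanical.
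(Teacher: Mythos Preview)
Your approach is exactly the paper's: cover the subgraph by the link-intersections $X_i = V(H)\cap V(G_i)$ and apply the $2$-thin cover bound (Lemma~\ref{thm:2-thin-rank}). The paper parameterizes directly by $s$ (number of size-$1$ consecutive intersections $X_i\cap X_{i+1}$) and $t$ (number of size-$2$ intersections), uses $\sum|X_i|=|V|+s+2t$ and $rank(T'_m[X_i])\le 3|X_i|-6$, and obtains $IE\le 3|V|-3s-t$; with $s+t=m\ge 7$ this is $<3|V|-6$. Your ``refined count'' with full/half/empty hinges is the same bookkeeping in different clothing, and your final inequality $IE\le 3|V(H)|-m$ is exactly the paper's bound in the case $s=0$, $t=m$.

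One genuine correction: the structural claim in your cover setup---that the hypotheses force every $|X_i|\ge 4$ and every hinge to have both endpoints in $V(H)$---is false. A connected $H$ can omit one endpoint of a hinge (giving $s>0$) or even both (an empty intersection) while remaining connected around the other side of the ring; removing hinge vertices that are not in $V(H)$ cannot disconnect $H$, so ``not separated by any hinge'' gives you nothing there. You do flag this in your ``Main obstacle'' paragraph, but the body of the proof states the strong claim as if it were established. The paper avoids this by building $s$ into the count from the start rather than first asserting $t=m$. (Neither argument carefully dispatches clusters with $|X_i|\le 2$, where the bound $3|X_i|-6$ fails; both are at the same level of informality on that edge case.)
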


\begin{proof}
    Let $R_m = R(G_1,\dots,G_m)$ be a ring, with $m \geq 7$, and $T_m = T(H_1,\dots,H_m)$ be any of its subgraphs that is not contained in any link and not separated by any hinge, where each $H_i$ is the vertex-maximal induced subgraph of $G_i$ contained in $T_m$.  
    Also, let $V$ and $X_i$ be the vertex sets of $T_m$ and $H_i$, respectively.  
    Then, $\mathcal{X} = \{X_1,\dots,X_m\}$ is clearly an independent $2$-thin cover of $T_m$.  
    Let $T'_m$ be obtained from $T_m$ by adding as an edge each pair in $S(\mathcal{X})$ whose vertices are both contained in $T_m$.  
    Also, let $P(\mathcal{X})$ contain the set $X_i \cap X_j$ for all $i \neq j$, and let $s$ and $t$ be the number of sets in $P(\mathcal{X})$ of size one and two, respectively.  
    Note that $s + t = m$, $\sum_{e \in S(\mathcal{X})} (d(\mathcal{X},e)-1) = t$, $rank(T'_m[X_i]) \leq 3|X_i| - 6$, and $\sum_{X_i \in \mathcal{X}} |X_i| = |V| + s + 2t$.  
    Therefore, applying Lemma \ref{thm:2-thin-rank} gives $rank(T_m) \leq 3|V| - 3s - t < 3|V| - 6$, and so $T_m$ is flexible.  
\end{proof}

\begin{lemma}[Sufficiently large ring with nucleation-free links is nucleation-free]
    \label{7-link-nucleation-free}
    A ring with at least $7$ links is nucleation-free if each of its links is nucleation-free.
\end{lemma}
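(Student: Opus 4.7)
The plan is to prove the contrapositive: every subgraph $N$ of $R_m$ on at least $5$ vertices is flexible, under the hypotheses $m \geq 7$ and that each link $G_i$ is nucleation-free. The argument splits naturally into three cases according to how $V(N)$ meets the link structure of $R_m$, and each case is short given the tools already developed.

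In the first case, $V(N) \subseteq V(G_i)$ for some single link $G_i$. Then $N$ is a subgraph on at least $5$ vertices of the nucleation-free link $G_i$, so $N$ is flexible by hypothesis.

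In the second case, $N$ spans at least two links and is separated by some hinge $(a,b)$ of $R_m$; that is, $a,b \in V(N)$ and $N - \{a,b\}$ is disconnected. Write $N = N_1 \cup N_2$ with $V(N_1) \cap V(N_2) = \{a,b\}$ and $|V(N_i)| \geq 3$. I would apply submodularity of the matroid rank function to $E(N_1 \cup \{ab\})$ and $E(N_2 \cup \{ab\})$: their union is $E(N \cup \{ab\})$, and their intersection is exactly $\{ab\}$ because the only edge with both endpoints in $V(N_1) \cap V(N_2) = \{a,b\}$ is $ab$ itself. Combined with the standard bound $r(H) \leq 3|V(H)| - 6$ applied to each $N_i \cup \{ab\}$, and the identity $|V(N_1)| + |V(N_2)| = |V(N)| + 2$, submodularity yields
\begin{equation*}
    r(N) \leq r(N \cup \{ab\}) \leq (3|V(N_1)| - 6) + (3|V(N_2)| - 6) - 1 = 3|V(N)| - 7,
\end{equation*}
which is strictly less than $3|V(N)| - 6$, so $N$ is flexible.

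In the third case, $N$ spans at least two links and no hinge of $R_m$ separates $N$. Then Lemma \ref{lem:7-link-flexible} applies directly and yields flexibility.

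The main obstacle is the second case. The physical intuition, that a graph with a $2$-vertex cut in dimension $3$ always admits a flex by rotating one side about the axis through the cut, is familiar for generic frameworks, but to make it valid in any abstract $3$-dimensional rigidity matroid requires only the combinatorial trick of virtually adjoining $ab$ so that the intersection in the submodular inequality is a rank-$1$ set. This produces the extra ``$-1$'' that is responsible for the strict inequality; without this adjoinment, submodularity alone only gives $r(N) \leq 3|V(N)| - 6$, which is the trivial bound. The first and third cases reduce immediately to the nucleation-freeness hypothesis and Lemma \ref{lem:7-link-flexible}, respectively.
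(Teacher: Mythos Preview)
Your proof is correct and takes essentially the same three-case split as the paper's own argument (contained in a single link; separated by a hinge; otherwise invoke Lemma~\ref{lem:7-link-flexible}). The paper dispatches the second case with the single word ``clearly,'' whereas you supply the explicit submodularity computation with the virtually adjoined edge $ab$, which is a valid way to make that step precise in any abstract $3$-rigidity matroid.
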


\begin{proof}
    Consider any ring whose links are all nucleation-free and that has at least $7$ links.  
    Any subgraph of this ring on at least $5$ vertices either is contained in a link, is separated by a hinge, or has neither of the previous properties.  
    The subgraph is flexible in the first case since each link is nucleation-free, in the second case clearly, and in the third case by Lemma \ref{lem:7-link-flexible}.  
    Thus, the ring is nucleation-free.  
\end{proof}

\begin{proof}[Proof of Theorem \ref{thm:henneberg-ii_ring}]
    $R'_m$ is independent since $R_m$ is independent and Henneberg-II extensions preserve independence.  
    To show that $R'_m$ is nucleation-free, we prove that any one of its links $G'_i$ is nucleation-free and then apply Lemma \ref{7-link-nucleation-free}.  
    By assumption, the graph $H$ obtained from $G_i$ by deleting both of its hinge edges is nucleation-free.  
    Hence, any nucleation $N$ of $G'_i$ must contain the vertex $v$ added by the Henneberg-II extension performed on $G_i$.  
    Also, $N$ clearly must contain at least three edges incident to $v$.  
    If it contains exactly $3$ such edges, then applying what Theorem \ref{thm:prev_ops} says about Henneberg-I extensions to the independent nucleation-free graph obtained from $N$ by deleting $v$ shows that $N$ is nucleation-free, which is a contradiction.  
    
    Otherwise, wlog we can assume that $N$ is minimally rigid.  
    But then, the graph $N'$ obtained by reversing the Henneberg-II extension on $N$ -- i.e., deleting $v$ and making one hinge nonedge into an edge -- is minimally rigid.  
    This implies making the remaining hinge nonedge into an edge yields a dependent graph, which is a subgraph of $G_i$.  
    However, since $G_i$ is independent, this is a contradiction, and so $G'_i$ is nucleation-free.  

    Finally, we show that each hinge $f$ of $R'_m$ is implied using the rank-sandwich technique.  
    Let $R''_m = R'_m \cup S(\mathcal{X})$ and note that the $2$-thin cover $\mathcal{X} = \{X_1,\dots,X_m\}$ of $R'_m$, where $X_i = V(G'_i)$, is independent and also a $2$-thin cover of $R''_m$.  
    Since each $G_i$ is independent and rigid, by assumption, and Henneberg-II extensions preserve these properties, deleting any one hinge edge from any link $R''_m[X_i]$ yields an independent graph $H_i$ in which the deleted hinge is implied.  
    Consequently, we get $rank(R''_m[X_i]) = |E(G'_i)| + 1$.  
    Hence, Lemma \ref{thm:2-thin-rank} gives that $rank(R''_m) \leq \sum_{X_i \in \mathcal{X}} |E(G'_i)|$.  
    Since the RHS of this inequality is the number of edges in $R'_m$, we get $rank(R''_m) = rank(R'_m)$.  
    Finally, this implies that $rank(R'_m \cup f) = rank(R'_m)$, and thus $f$ is implied in $R'_m$.  
\end{proof}

\section{Split-and-glue}
\label{sec:split-and-glue}

Here we prove Theorems \ref{thm:drsg-ind} - \ref{thm:starter} in the following subsections.

\subsection{Nucleation-free preserving \SG/}
\label{sec:dra-nf}

Here we prove Theorem \ref{thm:sg-nf}.  

\begin{proof}[Proof of Theorem \ref{thm:sg-nf}]
    Assume that $G$ and $H$ are independent and $G^s$ and $H$ are nucleation-free.  
    Since any graph with a separator of size at most two is flexible, any nucleation $N'$ of $G^s:H$ contains at least three vertices in $\{a_1,a_2,b_1,b_2\}$ and is not a subgraph of $G^s$ nor $H$.  
    Wlog, we can assume that $|E(N')| = 3|V(N')| - 6$.  
    Let $J$ be the maximal induced subgraph of $N'$ contained in $H$ and let $N$ be the subgraph of $G$ obtained from $N'$ by first adding the vertices $a$ and $b$, then, for any vertex $w$ in $V(N') \setminus V(J)$, replacing all edges $(a_1,w)$ and $(a_2,w)$ with $(a,w)$ and all edges $(b_1,w)$ and $(b_2,w)$ with $(b,w)$, and finally deleting all vertices in $V(J) \cup \{a_1,a_2,b_1,b_2\}$.  
    Note that $|E(N)| = |E(N')| - |E(J)|$.  
    Hence, since $H$ is independent, we have $|E(N)| \geq 3|V(N') - 6 - 3(|V(J)| - 2)$.  
    Next, $J$ must contain at least three vertices in $\{a_1,a_2,b_1,b_2\}$, or else $N'$ has a separator of size two and consequently is flexible.  
    Therefore, we get that $|V(N')| - |V(N)| \geq |V(J)| - 2$.  
    Thus, we have $|E(N)| \geq 3|V(N') - 6 - 3(|V(N')| - |V(N)|) = 3|V(N)| - 6$, but then $G$ is dependent contrary to our assumption.  
    This completes the proof.  
\end{proof}

% \begin{corollary}
%     \label{cor:dra-nf}
%     The graph resulting from a double-butterfly \SG/ is nucleation-free if the base graph is independent and the split graph is nucleation-free.  
% \end{corollary}

% \begin{proof}
%     It is easy to verify that the double-butterfly ear satisfies Condition (iii) of Theorem \ref{thm:sg-nf}, and so this theorem applies and shows that the \SG/ graph is nucleation-free.  
% \end{proof}

\subsection{Independence preserving double-butterfly \SG/}
\label{sec:dra-ind}

Here we prove Theorem \ref{thm:drsg-ind}.   
%A \emph{self-stress} of a framework $G(p)$ is a vector $s$ in the left-kernel of its rigidity matrix, i.e., $s$ satisfies $sR(G(p)) = 0$.  
 Recall that each coordinate $s_{uv}$ of an equilibrium self-stress vector $s$ of a framework $G(p)$  corresponds to an edge $(u,v)$ of $G$, and an equilibrium self-stress  vector belongs to the left-null-space of the rigidity matrix $R(G(p))$ and satisfies the following \emph{stress balance equation} for each vertex $u$ of $G$:
$$\sum_{u:(u,v)\in E} s_{uv} (p_u - p_v) = 0.$$  
 Clearly, the rows of $R(G(p))$ are linearly independent if and only if $G(p)$ has no non-zero equilibrium self-stress.  
\begin{figure}[htb]
    \centering
    \begin{subfigure}[t]{0.49\linewidth}
        \centering
        \includegraphics[width=0.7\textwidth]{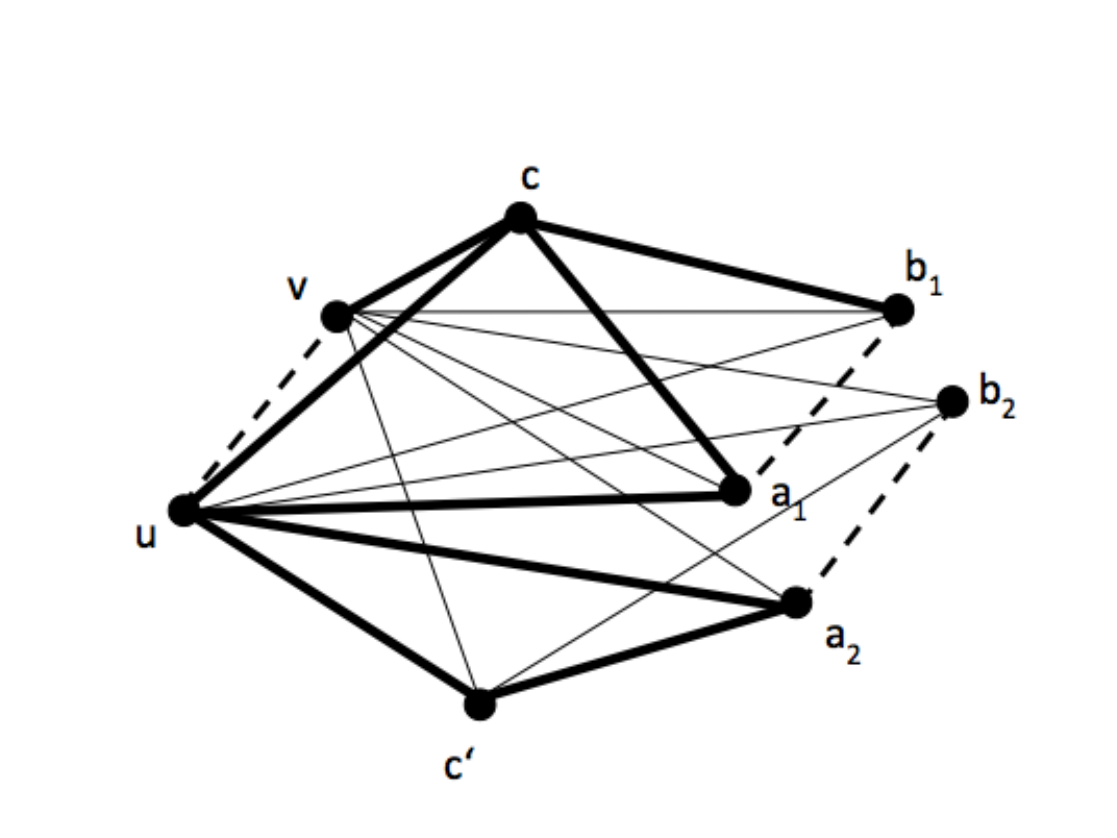}
    \end{subfigure}
    \begin{subfigure}[t]{0.49\linewidth}
        \centering
        \includegraphics[width=0.7\textwidth]{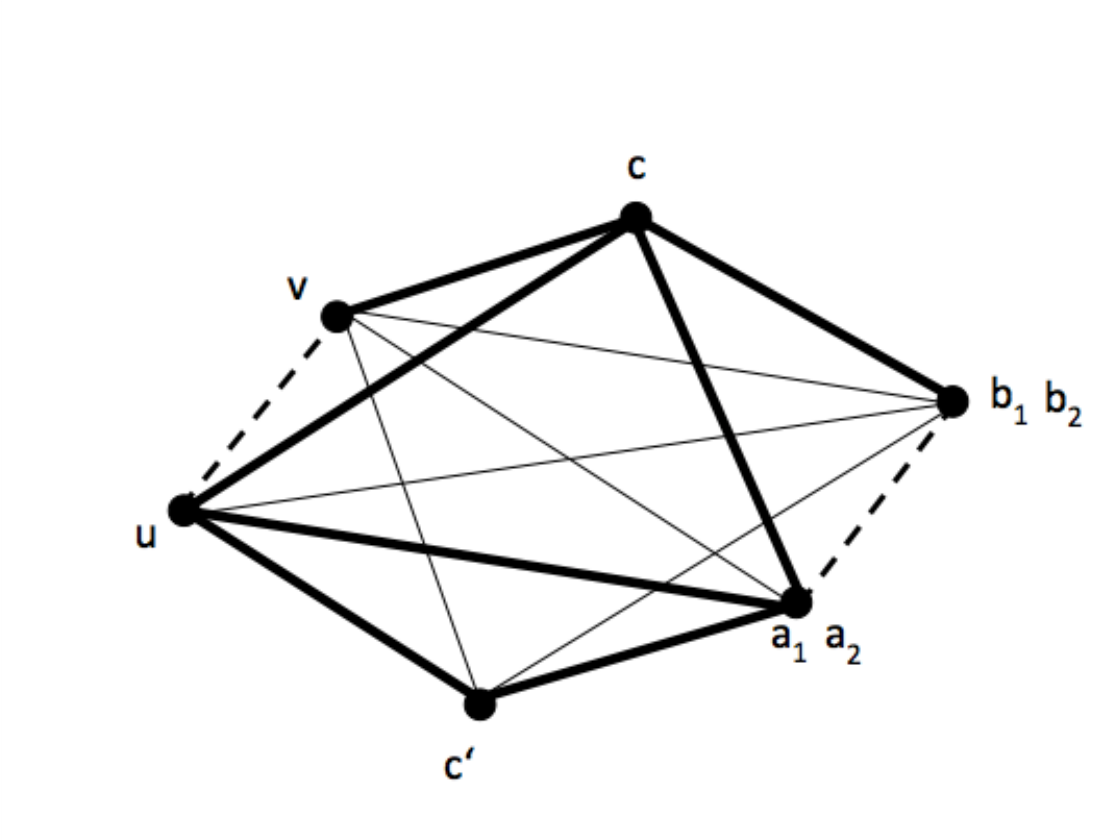}
    \end{subfigure}
    \begin{subfigure}[t]{0.49\linewidth}
        \centering
        \includegraphics[width=0.7\textwidth]{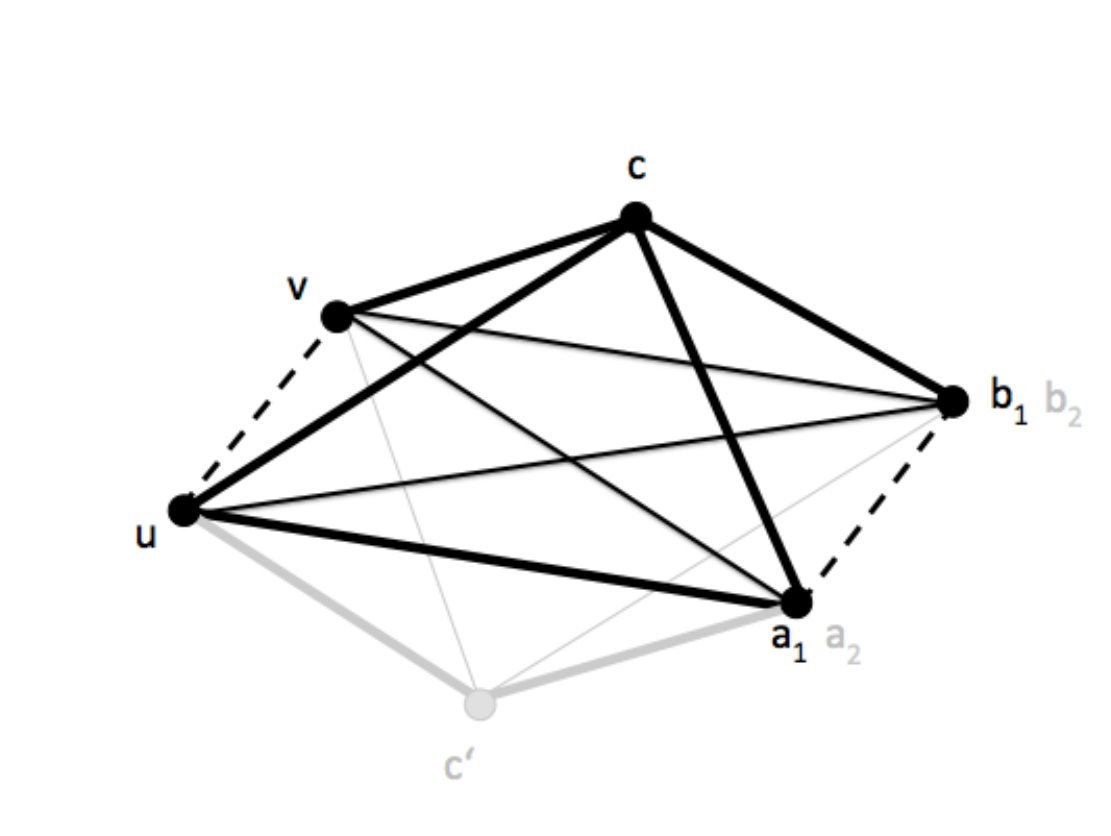}
    \end{subfigure}
    \begin{subfigure}[t]{0.49\linewidth}
        \centering
        \includegraphics[width=0.7\textwidth]{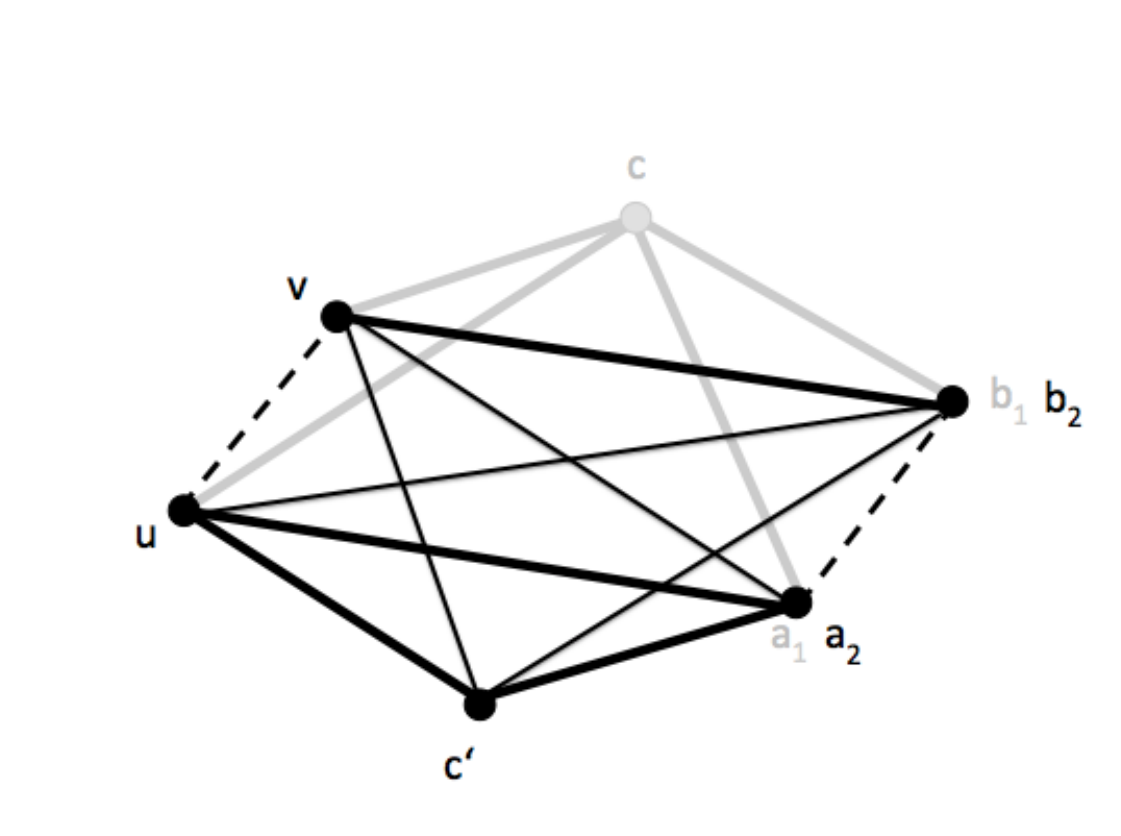}
    \end{subfigure}
    \caption{Top-left: a double-butterfly used as an ear in a double-butterfly \SG/.  
    Top-right: the placement of the double-butterfly in a hinged double-butterfly framework.  
    Bottom: the butterflies in the double-butterfly are highlighted.  
    See the proof sketch of Theorem \ref{thm:drsg-ind} in Section \ref{sec:split-and-glue}.}
    \label{fig:ringStress}
\end{figure}

\smallskip
We first provide the proof structure of Theorem  \ref{thm:drsg-ind} before the formal proof. 
Let a double-butterfly \SG/ whose base graph $G$ is independent split the nonedge $(a,b)$ into $(a_1,b_1)$ and $(a_2,b_2)$ and yield the graph $G'$.  
We start by taking any generic framework $G(q)$, which has no non-zero self-stress by the above fact, and using it to construct a particular (not necessarily generic) framework $G'(p)$, called a \emph{hinged double-butterfly framework}, as follows.  
Refer to Figure \ref{fig:ringStress}.  
For any vertex $x$ contained in both $G$ and $G'$, set $p_x = q_x$.  
Also, set $p_{a_1} = p_{a_2} = q_a$ and $p_{b_1} = p_{b_2} = q_b$.  
Letting the remaining vertices of the double-butterfly be labeled as in Figure \ref{fig:twoRoofs}, choose $p_u$ and $p_v$ so they form a square in a plane with $p_{a_1}$ and $p_{b_1}$ such that the line segments between $p_{a_1}$ and $p_v$ and between $p_{b_1}$ and $p_u$ intersect.  
Lastly, choose $p_c$ and $p_{c'}$ to be distinct and such that the line containing these points is perpendicular to the square and passes through its center.  

Next, using a series of lemmas, we show that $G'(p)$ has a non-zero self-stress only if $G(q)$ does.  
Since $G(q)$ has no non-zero self-stress, we get that the rigidity matrix of $G'(p)$ has linearly independent rows.  
Hence, this rigidity matrix has maximum rank over all rigidity matrices of frameworks of $G'$, and so there exists a generic framework sufficiently close to $G'(p)$ whose rigidity matrix has linearly independent rows.  
This implies that $G'$ is independent, which completes the proof.  
\qed

\smallskip
The relationship between the non-zero self-stresses of $G'(p)$ and $G(q)$ is established using Lemmas \ref{lem:dra-c} and \ref{lem:dra-a12}, below.  

\begin{lemma}[Stress values around $c$]
    \label{lem:dra-c}
    Consider a double-butterfly \SG/ yielding the graph $G'$, and let $G'(p)$ be a hinged double-butterfly framework.  
    Then, for any self-stress $s$ of $G'(p)$, we have $s_{ca_1} = s_{cv}$, $s_{cb_1} = s_{cu}$, and $s_{ca_1} = - s_{cb_1}$.  
\end{lemma}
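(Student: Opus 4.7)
The plan is to apply the stress-balance equation at vertex $c$ and exploit the geometric symmetry imposed by the hinged double-butterfly construction on the positions of $c$'s neighbors. Note first that in the double-butterfly, $c$ has degree four with neighborhood exactly $\{a_1, b_1, u, v\}$; since $c$ lies in the ear and is not a gluing vertex, no edges from $c$ to the rest of $G^s:H$ exist, so the stress-balance equation at $c$ involves precisely these four terms.

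First I would fix coordinates based on the construction. Let $o$ denote the center of the square formed by $p_{a_1}, p_{b_1}, p_u, p_v$. Since the segments $\overline{p_{a_1}p_v}$ and $\overline{p_{b_1}p_u}$ intersect by hypothesis, they must be the two diagonals of the square (no two edges of a square cross, and the four corners are distinct), so
\[
p_v - o = -(p_{a_1} - o), \qquad p_u - o = -(p_{b_1} - o).
\]
Setting $d_1 := p_{a_1} - o$ and $d_2 := p_{b_1} - o$, these two vectors are orthogonal, of equal length, and span the plane of the square. Letting $n$ be a nonzero vector normal to this plane, the construction places $p_c$ on the perpendicular line through $o$, so $p_c = o + tn$ for some scalar $t$.

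Substituting into the stress-balance equation
\[
s_{ca_1}(p_c - p_{a_1}) + s_{cb_1}(p_c - p_{b_1}) + s_{cu}(p_c - p_u) + s_{cv}(p_c - p_v) = 0
\]
and regrouping yields
\[
(s_{ca_1} + s_{cb_1} + s_{cu} + s_{cv})\,tn + (s_{cv} - s_{ca_1})\,d_1 + (s_{cu} - s_{cb_1})\,d_2 = 0.
\]
Because $\{d_1, d_2, n\}$ is linearly independent in $\mathbb{R}^3$, each of the three coefficients must vanish; this immediately gives $s_{ca_1} = s_{cv}$ and $s_{cb_1} = s_{cu}$.

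For the third identity the only real subtlety is ensuring $t \neq 0$, i.e., $p_c \neq o$. This is implicit in (and may be imposed without loss on) the hinged double-butterfly construction: $p_c$ and $p_{c'}$ are required to be distinct points on the normal line through $o$, and we may choose the framework so that $p_c \neq o$, which is consistent with all the stated placement rules. Once $t \neq 0$, the vanishing of the first coefficient yields $s_{ca_1} + s_{cb_1} + s_{cu} + s_{cv} = 0$; combining this with the already-established identities $s_{ca_1} = s_{cv}$ and $s_{cb_1} = s_{cu}$ gives $2(s_{ca_1} + s_{cb_1}) = 0$, whence $s_{ca_1} = -s_{cb_1}$, as required.
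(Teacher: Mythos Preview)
Your proof is correct and follows essentially the same approach as the paper: apply the stress-balance equation at $c$ and use the diagonal symmetry of the square together with the perpendicular placement of $p_c$ to separate the equation into independent components. The paper works in explicit $xy$/$z$ coordinates while you use the abstract orthogonal basis $\{d_1,d_2,n\}$, but this is cosmetic; you are also slightly more explicit in justifying why $a_1,v$ and $b_1,u$ are the diagonal pairs and in flagging the $t\neq 0$ assumption, both of which the paper leaves implicit.
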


\begin{proof}
    Wlog, assume that the square whose vertices are $q_{a_1}$, $q_{b_1}$, $q_u$, and $q_v$ lies in the $xy$-plane.  
    % Let $w_1 = (p_c - p_{a_1})$, $w_2 = (p_c - p_v)$, $z_1 = (p_c - p_{b_1})$, and $z_2 = (p_c - p_u)$.  
    Also, let $xy(t)$ be the projection of any vector $t$ onto the $xy$-plane.  
    Since $q_c$ lies on the line perpendicular to the $xy$-plane and passing through the center of this square, we have that $xy(p_c - p_{a_1}) = - xy(p_c - p_v)$, $xy(p_c - p_{b_1}) = - xy(p_c - p_u)$, and $xy(p_c - p_{a_1})$ and $xy(p_c - p_{b_1})$ are perpendicular.  
    Also, since $s$ is a self-stress, we have 
    $$s_{ca_1}(p_c - p_{a_1}) + s_{cv}(p_c - p_v) + s_{cb_1}(p_c - p_{b_1}) + s_{cu}(p_c - p_u) = 0.$$  
    Combining these facts shows that $s_{ca_1} = s_{cv}$ and $s_{cb_1} = s_{cu}$, and hence the observation that $(p_c - p_{a_1})$, $(p_c - p_v)$, $(p_c - p_{b_1})$, and $(p_c - p_u)$ all have the same projection onto the $z$-axis shows that $s_{ca_1} = - s_{cb_1}$, which completes the proof.  
\end{proof}

\begin{lemma}[Stress values around $a_1$ and $a_2$]
    \label{lem:dra-a12}
    Consider a double-butterfly \SG/ yielding the graph $G'$, let $H$ be the double-butterfly, and let $G'(p)$ be a hinged double-butterfly framework.  
    Then, for any self-stress $s$ of $G'(p)$, the following equation is satisfied
    $$\sum_{(x,y)\in E(H):x\in\{a_1,a_2\}} s_{xy} (p_x - p_y) = 0.$$
\end{lemma}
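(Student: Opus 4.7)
The plan is to exploit the high symmetry of the hinged double-butterfly framework together with the stress balance equations at the four interior vertices $u$, $v$, $c$, $c'$ of the ear $H$. Since none of these vertices belongs to $G^s$, each of their stress balance equations in $G'(p)$ reduces to a sum over edges of $H$ only, which provides exactly enough scalar constraints to force the claimed vector identity.

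First I would fix coordinates so that the square with vertices $p_{a_1}, p_{b_1}, p_v, p_u$ sits in the $xy$-plane centered at the origin, with $p_c=(0,0,h)$ and $p_{c'}=(0,0,h')$ on the $z$-axis and $h\ne h'$; by the hinging, $p_{a_2}=p_{a_1}$ and $p_{b_2}=p_{b_1}$. Next I would collect stress identities. Lemma \ref{lem:dra-c}, applied both at $c$ and (by the same argument) at $c'$, yields the relations $s_{ca_1}=s_{cv}$, $s_{cb_1}=s_{cu}$, $s_{ca_1}=-s_{cb_1}$ together with their primed analogs for $c'$. Projecting the stress balance equations at $u$ and $v$ onto the three coordinate axes supplies further scalar relations, including $s_{ua_1}+s_{ua_2}=0$ and $s_{vb_1}+s_{vb_2}=0$ from the ``short'' axes of the square at $u$ and $v$, a cross relation linking $s_{va_1}+s_{va_2}$ to $s_{vc}+s_{vc'}$, and a height relation $h\,s_{ca_1}+h'\,s_{c'a_2}=0$ from the $z$-projections.

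Finally I would expand the target sum $T_{a_1}+T_{a_2}$ componentwise in these coordinates. The $y$-component collapses to $s_{ua_1}+s_{ua_2}$, which is killed by the $x$-projection at $u$; the $z$-component collapses to a multiple of $h\,s_{ca_1}+h'\,s_{c'a_2}$, killed by the height relation; and the $x$-component collapses to a linear combination of the cross relation at $v$ and the $x$-projection at $u$, which also vanishes after substitution via Lemma \ref{lem:dra-c}. The main obstacle is not a single conceptual leap but the careful bookkeeping: keeping the two butterflies distinct, tracking which edges enter each interior stress balance, and matching each projected scalar relation to exactly the coordinate of $T_{a_1}+T_{a_2}$ it kills. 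Once the chosen coordinates make the square's symmetry manifest and Lemma \ref{lem:dra-c} is fed in at $c$ and $c'$, the identity $T_{a_1}+T_{a_2}=0$ drops out.
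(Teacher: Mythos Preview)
Your proposal is correct and follows essentially the same approach as the paper. Both arguments use the stress balance equations at the interior vertices $u$ and $v$ together with Lemma~\ref{lem:dra-c} applied at $c$ and at $c'$; the paper packages the computation as the vector identities (1)--(4) and invokes the reflectional symmetry of the framework to pass from the balance at $v$ to the corresponding identity at $a_1,a_2$, whereas you unroll the same content into explicit coordinate projections, but the ingredients and logical flow are identical.
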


\begin{proof}
    It suffices to show that the following equations are satisfied
    \begin{align}
        &s_{a_1c}(p_{a_1}-p_c) + s_{a_1v}(p_{a_1}-p_v) + s_{a_2c'}(p_{a_2}-p_{c'}) + s_{a_2v}(p_{a_2}-p_v) = 0,\label{eqn:1}\\
        &s_{a_1u}(p_{a_1}-p_u) + s_{a_2u}(p_{a_2}-p_u) = 0.\label{eqn:2}
    \end{align}
    To show that \ref{eqn:1} is satisfied, we first demonstrate that \begin{align}
        &s_{vc}(p_v-p_c) + s_{a_1v}(p_v-p_{a_1}) + s_{vc'}(p_v-p_{c'}) + s_{a_2v}(p_v-p_{a_2}) = 0,\label{eqn:3}\\
        &s_{vb_1}(p_v-p_{b_1}) + s_{vb_2}(p_v-p_{b_2}) = 0\label{eqn:4}
    \end{align}
    are satisfied.  
    Since $s$ is a self-stress, we have that
    $$\sum_{y:(v,y)\in E(G')} s_{vy} (p_v - p_y) = 0.$$  
    Note that some plane $P$ contains all four vectors in \ref{eqn:3}.  
    Since $(p_v-p_{b_1})$ and $(p_v-p_{b_2})$ are equal and have the same non-zero projection onto any plane perpendicular to $P$, the above facts show that \ref{eqn:4} is satisfied, and consequently \ref{eqn:3} is satisfied.  
    Combining this with Lemma \ref{lem:dra-c} and the reflectional symmetry of the double-butterfly framework in $G'(p)$ shows that \ref{eqn:1} is satisfied.  

    A similar argument that examines stresses on the edges incident on $u$ shows that \ref{eqn:2} is satisfied.  
    This competes the proof.  
\end{proof}

We now prove Theorem \ref{thm:drsg-ind}.  

\begin{proof}[Proof of Theorem \ref{thm:drsg-ind}]
    Consider any double-butterfly \SG/ whose base graph $G$ is independent and that yields the graph $G'$.  
    As discussed in the above proof sketch, it suffices to show that some hinged double-butterfly framework $G'(p)$ has no non-zero self-stress.  
    We will show that if such a self-stress exists, then the generic framework $G(q)$ from which $G'(p)$ is constructed has a non-zero self-stress.  
    Then, combining the contrapositive of this statement with the independence of $G$ completes the proof.  

    Let $s'$ be a non-zero self-stress of $G'(p)$.  
    Also, let $s$ be the vector whose coordinates $s_{xy}$ correspond to some edge $(x,y)$ of $G$ such that $s_{xy} = s'_{xy}$ if $(x,y)$ is not incident to $a$ or $b$, $s_{xy} = s'_{a_iy}$ if $x = a$ and $(a_i,y)$ is an edge of $G'$, and $s_{xy} = s'_{b_iy}$ if $x = b$ and $(b_i,y)$ is an edge of $G'$.  
    Using Lemma \ref{lem:dra-a12}, we see that $s$ is a self-stress of $G(q)$.  
    It remains to show that $s$ is non-zero.  
    Let $H$ be the double-butterfly ear of the \SG/ and let $t$ be the projection of $s'$ onto the coordinates corresponding to the edges of $H$.  
    If $t$ is the zero-vector, then the fact that $s'$ is non-zero shows that $s$ is non-zero.  
    Otherwise, we consider each case for the non-zero coordinates of $t$.  
    Wlog, assume that the square whose vertices are $p_{a_1}$, $p_{b_1}$, $p_u$, and $p_v$ lies in the $xy$-plane with $p_u$ at the origin, $p_{a_1}$ on the positive $x$-axis, and $p_v$ on the positive $y$-axis.  
    
    Assume that $t_{a_1c}$ is non-zero.  
    Then, the vector $w = s'_{a_1c}(p_{a_1}-p_c) + s'_{a_1u}(p_{a_1}-p_u) + s'_{a_1v}(p_{a_1}-p_v)$ has a non-zero $z$-coordinate.  
    Since $s'$ is a self-stress, this implies that $s$ has a non-zero coordinate corresponding to some edge incident to $a$.  
    An identical argument applies if $t_{a_2c'}$, $t_{b_1c}$, or $t_{b_2c'}$ is non-zero.  
    Also, if $t_{uc}$, $t_{uc'}$, $t_{vc}$, or $t_{vc'}$ is non-zero, then Lemma \ref{lem:dra-c} shows that at least one of $t_{a_1c}$, $t_{a_2c'}$, $t_{b_1c}$, or $t_{b_2c'}$ is non-zero, and so the above argument applies.  

    Next, assume that none of the above situations occur.  
    Also, assume that $t_{a_1v}$ is non-zero.  
    Since $t_{a_1c}$ is zero, $w$ has a non-zero $y$-coordinate.  
    Since $s'$ is a self-stress, this implies that $s$ has a non-zero coordinate corresponding to some edge incident to $a$.  
    An identical argument applies if $t_{a_2v}$, $t_{b_1u}$, or $t_{b_2u}$ is non-zero.  
    Lastly, a similar argument applies to the case where $t_{a_1u}$, $t_{a_2u}$, $t_{b_1v}$, or $t_{b_2v}$ is non-zero.  
    This completes the proof.  
\end{proof}

\subsection{Implied nonedge preserving safe \SG/}
\label{sec:dra-implied}

Here we prove Theorem \ref{thm:sg-implied}.  
First, we define a \emph{safe \SG/}, which is a \SG/ with additional conditions on all involved graphs and operations.  
We introduce these conditions in parts.

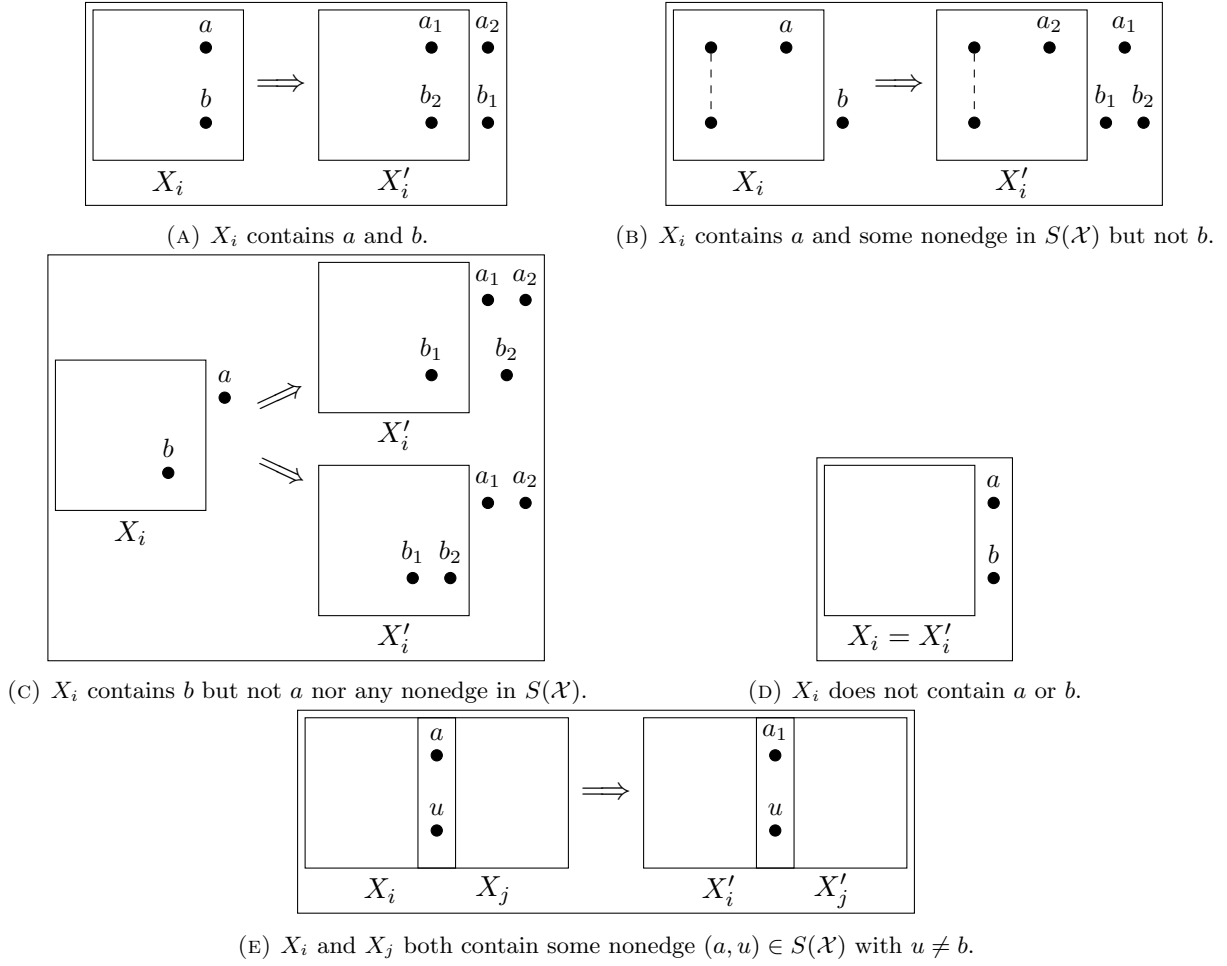
\begin{figure}[htb]
    \centering
    \begin{subfigure}[t]{0.49\linewidth}
        \centering
        \begin{tikzpicture}
            \draw (-0.1,-0.6) rectangle (5.5,2.1);
            
            \draw (0,0) rectangle (2,2);
            \node at (1,-0.3) {\large$X_i$};
            \node[draw,circle,fill,inner sep=-1.5pt, label=$a$] (a) at (1.5,1.5) {};
            \node[draw,circle,fill,inner sep=-1.5pt, label=$b$] (b) at (1.5,0.5) {};
    
            \node at (2.5,1) {\Large$\implies$};
    
            \draw (3,0) rectangle (5,2);
            \node at (4,-0.3) {\large$X'_i$};
            \node[draw,circle,fill,inner sep=-1.5pt, label=$a_1$] (a) at (4.5,1.5) {};
            \node[draw,circle,fill,inner sep=-1.5pt, label=$a_2$] (a) at (5.25,1.5) {};
            \node[draw,circle,fill,inner sep=-1.5pt, label=$b_2$] (b) at (4.5,0.5) {};
            \node[draw,circle,fill,inner sep=-1.5pt, label=$b_1$] (b) at (5.25,0.5) {};
        \end{tikzpicture}
        \caption{$X_i$ contains $a$ and $b$.}
    \end{subfigure}
    \begin{subfigure}[t]{0.49\linewidth}
        \centering
        \begin{tikzpicture}
            \draw (-0.1,-0.6) rectangle (6.5,2.1);
            
            \draw (0,0) rectangle (2,2);
            \node at (1,-0.3) {\large$X_i$};
            \node[draw,circle,fill,inner sep=-1.5pt] (u) at (0.5,1.5) {};
            \node[draw,circle,fill,inner sep=-1.5pt] (v) at (0.5,0.5) {};
            \draw[dashed] (u) -- (v);
            \node[draw,circle,fill,inner sep=-1.5pt, label=$a$] (a) at (1.5,1.5) {};
            \node[draw,circle,fill,inner sep=-1.5pt, label=$b$] (b) at (2.25,0.5) {};
    
            \node at (3,1) {\Large$\implies$};
    
            \draw (3.5,0) rectangle (5.5,2);
            \node at (4.5,-0.3) {\large$X'_i$};
            \node[draw,circle,fill,inner sep=-1.5pt] (u) at (4,1.5) {};
            \node[draw,circle,fill,inner sep=-1.5pt] (v) at (4,0.5) {};
            \draw[dashed] (u) -- (v);
            \node[draw,circle,fill,inner sep=-1.5pt, label=$a_2$] (a) at (5,1.5) {};
            \node[draw,circle,fill,inner sep=-1.5pt, label=$a_1$] (a) at (6,1.5) {};
            \node[draw,circle,fill,inner sep=-1.5pt, label=$b_1$] (b) at (5.75,0.5) {};
            \node[draw,circle,fill,inner sep=-1.5pt, label=$b_2$] (b) at (6.25,0.5) {};
        \end{tikzpicture}
        \caption{$X_i$ contains $a$ and some nonedge in $S(\mathcal{X})$ but not $b$.}
    \end{subfigure}
    \begin{subfigure}[t]{0.49\linewidth}
        \centering
        \begin{tikzpicture}
            \draw (-0.1,-3.2) rectangle (6.5,2.2);
            
            \draw (0,-1.2) rectangle (2,0.8);
            \node at (1,-1.5) {\large$X_i$};
            \node[draw,circle,fill,inner sep=-1.5pt, label=$a$] (a) at (2.25,0.3) {};
            \node[draw,circle,fill,inner sep=-1.5pt, label=$b$] (b) at (1.5,-0.7) {};
    
            \node[rotate=25] at (3,0.3) {\Large$\implies$};
            \node[rotate=-25] at (3,-0.7) {\Large$\implies$};
    
            \draw (3.5,0.1) rectangle (5.5,2.1);
            \node at (4.5,-0.2) {\large$X'_i$};
            \node[draw,circle,fill,inner sep=-1.5pt, label=$b_1$] (a) at (5,0.6) {};
            \node[draw,circle,fill,inner sep=-1.5pt, label=$b_2$] (a) at (6,0.6) {};
            \node[draw,circle,fill,inner sep=-1.5pt, label=$a_1$] (b) at (5.75,1.6) {};
            \node[draw,circle,fill,inner sep=-1.5pt, label=$a_2$] (b) at (6.25,1.6) {};

            \draw (3.5,-2.6) rectangle (5.5,-0.6);
            \node at (4.5,-2.9) {\large$X'_i$};
            \node[draw,circle,fill,inner sep=-1.5pt, label=$b_1$] (a) at (4.75,-2.1) {};
            \node[draw,circle,fill,inner sep=-1.5pt, label=$b_2$] (a) at (5.25,-2.1) {};
            \node[draw,circle,fill,inner sep=-1.5pt, label=$a_1$] (b) at (5.75,-1.1) {};
            \node[draw,circle,fill,inner sep=-1.5pt, label=$a_2$] (b) at (6.25,-1.1) {};
        \end{tikzpicture}
        \caption{$X_i$ contains $b$ but not $a$ nor any nonedge in $S(\mathcal{X})$.}
    \end{subfigure}
    \begin{subfigure}[t]{0.49\linewidth}
        \centering
        \begin{tikzpicture}
            \draw (-0.1,-0.6) rectangle (2.5,2.1);
            
            \draw (0,0) rectangle (2,2);
            \node at (1,-0.3) {\large$X_i = X'_i$};
            \node[draw,circle,fill,inner sep=-1.5pt, label=$a$] (a) at (2.25,1.5) {};
            \node[draw,circle,fill,inner sep=-1.5pt, label=$b$] (b) at (2.25,0.5) {};
        \end{tikzpicture}
        \caption{$X_i$ does not contain $a$ or $b$.}
    \end{subfigure}
    \begin{subfigure}[t]{\linewidth}
        \centering
        \begin{tikzpicture}
            \draw (-0.1,-0.6) rectangle (8.1,2.1);
            
            \draw (0,0) rectangle (2,2);
            \node at (1,-0.3) {\large$X_i$};
            \node[draw,circle,fill,inner sep=-1.5pt, label=$a$] (a) at (1.75,1.5) {};
            \node[draw,circle,fill,inner sep=-1.5pt, label=$u$] (u) at (1.75,0.5) {};
             \draw (1.5,0) rectangle (3.5,2);
            \node at (2.5,-0.3) {\large$X_j$};

            \node at (4,1) {\Large$\implies$};

            \draw (4.5,0) rectangle (6.5,2);
            \node at (5.5,-0.3) {\large$X'_i$};
            \node[draw,circle,fill,inner sep=-1.5pt, label=$a_1$] (a) at (6.25,1.5) {};
            \node[draw,circle,fill,inner sep=-1.5pt, label=$u$] (u) at (6.25,0.5) {};
             \draw (6,0) rectangle (8,2);
            \node at (7,-0.3) {\large$X'_j$};
        \end{tikzpicture}
        \caption{$X_i$ and $X_j$ both contain some nonedge $(a,u) \in S(\mathcal{X})$ with $u \neq b$.  }
    \end{subfigure}
    \caption{All cases for $X'_i$ in a safe nonedge-split on a graph $G$ that splits $(a,b)$ into $(a_1,b_1)$ and $(a_2,b_2)$, up to interchanging $a$, $b$, and indices.  
    The safe split cover $\mathcal{X}^s$ contains $X'_i$ in all cases except the bottom-right case of (c), where each pair of distinct vertices in $X'_i$ that is an edge of $G$ is a cluster of $\mathcal{X}^s$.  }
    \label{fig:safesplit}
\end{figure}

A \emph{safe nonedge-split} on a pair $(G,\mathcal{X})$, where $\mathcal{X} = \{X_1,\dots,X_m\}$ is a $2$-thin cover of a graph $G$, is a nonedge-split on $G$ yielding $G^s$ that splits a nonedge $(a,b)$ in the shared set $S(\mathcal{X})$ into $(a_1,b_1)$ and $(a_2,b_2)$ such that the following two conditions are satisfied.  
Let $X'_i = (X_i \setminus \{a,b\}) \cup W_i$, where $W_i$ is the subset of all vertices in $\{a_1,a_2,b_1,b_2\}$ adjacent in $G^s$ to some vertex in $X_i$.  
(i) Neither $\{a_1,a_2\}$ nor $\{b_1,b_2\}$ is a subset of $X'_i$ whenever $X_i$ contains some nonedge in $S(\mathcal{X})$ and (ii) $X'_i \cap \{a_1,a_2\} = X'_j \cap \{a_1,a_2\}$ if $X_i \cap X_j = \{a,u\}$ where $u \neq b$ and $(a,u)$ is a nonedge in $S(\mathcal{X})$, and the same holds after interchanging $a$ and $b$.  
This operation induces a $2$-thin cover $\mathcal{X}^s$ of $G^s$, called the \emph{safe split cover}, whose sets of size greater than two are exactly those $X'_i$ that contain neither $\{a_1,a_2\}$ nor $\{b_1,b_2\}$ as a subset and whose sets of size two are the endpoints of an edge of $G^s$.  
See Figure \ref{fig:safesplit} and Figure \ref{fig:safesg}(a)-(c).  

\begin{figure}[htb]
    \centering
    \includegraphics[width=0.3\linewidth]{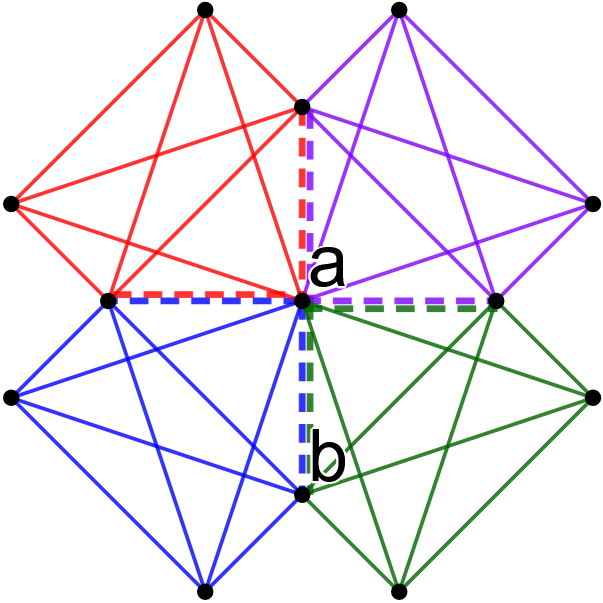}
    \caption{A graph $G$ and $2$-thin cover $\mathcal{X}$, whose clusters are colored and whose shared nonedges are depicted as parallel dashed line-segments of different colors, such that all safe nonedge-splits on $(G,\mathcal{X})$ that split $(a,b)$ are trivial.  
    It is easy to check that $G$ is a safe base graph.  
    See the definitions in Section \ref{sec:dra-implied}.  
    }
    \label{fig:nontrivialSG}
\end{figure}

Note that a nonedge-split that assigns all edges to exactly two vertices in $\{a_1,a_2,b_1,b_2\}$ is always safe, and we refer to this as a \emph{trivial} nonedge-split.  
However, it may not be possible to perform a non-trivial safe nonedge-split on a given pair $(G,\mathcal{X})$, i.e., every safe nonedge-split is trivial.  
For example, if $(G,\mathcal{X})$ is as in Figure \ref{fig:nontrivialSG} and some cluster of the safe split cover $\mathcal{X}^s$ contains $a_1$ and $b_1$ but not $a_2$ or $b_2$, then all clusters of $\mathcal{X}^s$ contain $a_1$ and $b_1$ but not $a_2$ or $b_2$
Thus, every safe nonedge-split is trivial.  
It is easy to check that $G$ is a \emph{safe base graph}, defined next, and so this obstacle to a non-trivial safe nonedge-split exists even when the conditions in this definition are satisfied.  

A \emph{safe base graph} $G$ is an independent graph that has some independent $2$-thin cover $\mathcal{X}$ such that $rank(G) = IE(G,\mathcal{X})$ and the shared set $S(\mathcal{X})$ contains a nonedge $(a,b)$ that can be split by a safe nonedge-split on $(G,\mathcal{X})$.  
Let $G^s$ be the split graph and $\mathcal{X}^s$ be the safe split cover resulting from this safe nonedge-split.  
A pair of distinct vertices in $\{a_1,a_2,b_1,b_2\}$ is a \emph{key gluing pair} if it is contained in some cluster of $\mathcal{X}^s$.  
Note that the definition of $\mathcal{X}^s$ implies that at least one key gluing pair exists.  
The \emph{gluing vertices} of $G^s$ are a subset of $\{a_1,a_2,b_1,b_2\}$ that contains the endpoints of all key gluing pairs.  
See Figure \ref{fig:safesg}(a)-(c).  

\begin{remark}
    The pair $(a,b)$ is an implied nonedge in $G$ because $IE(G,\mathcal{X})$ is the same whether the pair $(a,b)$ is an edge or nonedge, is equal to the rank of $G$, and is an upper bound on the rank of $G \cup {(a,b)}$.
\end{remark}

% \begin{itemize}
%     \item A \emph{safe base graph} is a pair $(G,\mathcal{X})$, where $G$ is an independent graph and $\mathcal{X}$ is one of its independent $2$-thin covers, such that $rank(G) = IE(G,\mathcal{X})$.  

%     \item A \emph{safe nonedge-split} on a safe base graph $(G,\mathcal{X})$, where $\mathcal{X} = \{X_1,\dots,X_m\}$, is a nonedge-split on $G$ whose split nonedge $(a,b)$ is contained in the shared set $S(\mathcal{X})$ and becomes $(a_1,b_1)$ and $(a_2,b_2)$ such that the following condition is satisfied.  
%     Letting $X'_i = (X_i \setminus \{a,b\}) \cup W$, where $W$ is the subset of all vertices in $\{a_1,a_2,b_1,b_2\}$ adjacent to some vertex in $X_i$ in the split graph, neither $\{a_1,a_2\}$ nor $\{b_1,b_2\}$ is a subset of any $X'_i$ such that $X_i$ that contains both endpoints of at least one nonedge in $S(\mathcal{X})$.  

%     \item The above operation defines a $2$-thin cover of the split graph $G^s$ whose only sets of size greater than two are those $X'_i$ that contain neither $\{a_1,a_2\}$ nor $\{b_1,b_2\}$ as subsets.  
%     We say the safe nonedge-split yields the \emph{safe split graph} $(G^s,\mathcal{X}^s)$.  
%     The \emph{safe gluing pairs} of $(G^s,\mathcal{X}^s)$ is the set $F$ of all pairs $(a_i,b_j)$ whose vertices are both contained in some cluster of $\mathcal{X}^s$, and the \emph{safe gluing vertices} of $(G^s,\mathcal{X}^s)$ is the set $U$ of all vertices in pairs in $F$.  

% \end{itemize}

A \emph{safe ear} $H$ is a graph that has some independent $2$-thin cover $\mathcal{X}^H$, called a \emph{safe ear cover}, and some set of \emph{gluing vertices} satisfying the following properties.  
Let $IE'(H,\mathcal{X}^H)$ be obtained from $IE(H,\mathcal{X}^H)$ by adding all \emph{key gluing pairs}, i.e., pairs of distinct gluing vertices that are contained in some cluster of $\mathcal{X}^H$, to $S(\mathcal{X}^H)$ in the summations.  
(i) Each cluster of $\mathcal{X}^H$ contains at most two gluing vertices, (ii) $S(\mathcal{X}^H) \cup F$ is independent where $F$ is the set of all pairs of distinct gluing vertices, and (iii) $rank(H) = IE'(H,\mathcal{X}^H) - 1$.  
See Figure \ref{fig:safesg}(d).  

\begin{figure}[ht!]
    \centering
    \begin{subfigure}[t]{0.49\linewidth}
        \centering
        \includegraphics[width=0.5\linewidth]{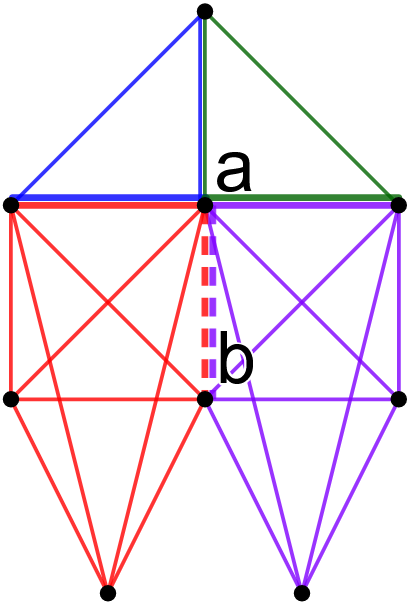}
        \caption{}
    \end{subfigure}
    \begin{subfigure}[t]{0.49\linewidth}
        \centering
        \includegraphics[width=0.8\linewidth]{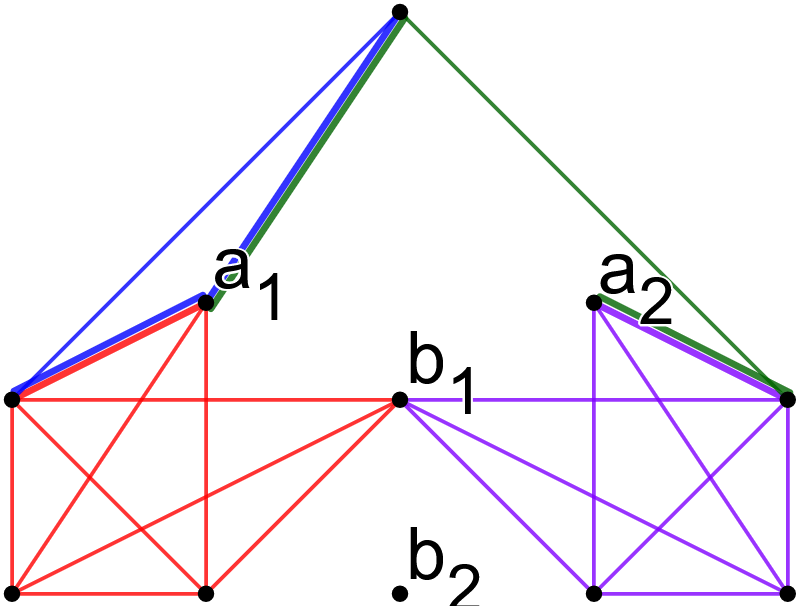}
        \caption{}
    \end{subfigure}
    \begin{subfigure}[t]{0.49\linewidth}
        \centering
        \includegraphics[width=0.8\linewidth]{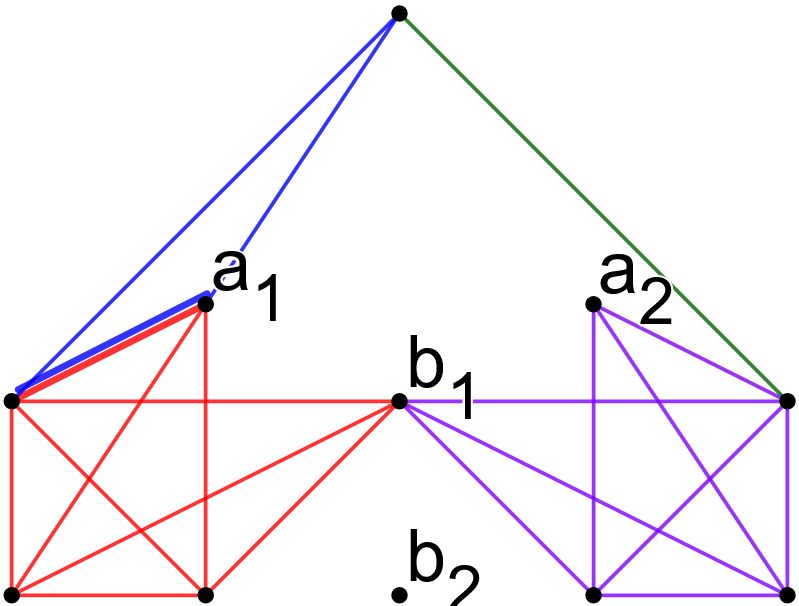}
        \caption{}
    \end{subfigure}
    \begin{subfigure}[t]{0.49\linewidth}
        \centering
        \includegraphics[width=0.5\linewidth]{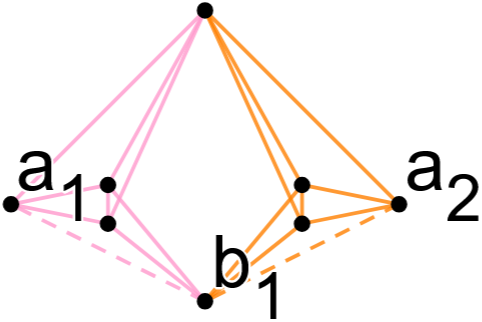}
        \caption{}
    \end{subfigure}
    \begin{subfigure}[t]{0.49\linewidth}
        \centering
        \includegraphics[width=0.8\linewidth]{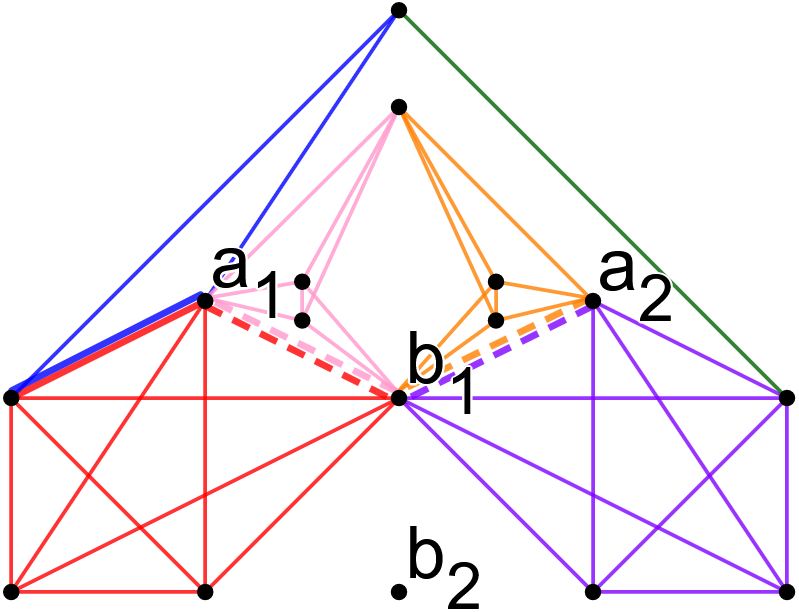}
        \caption{}
    \end{subfigure}
    \caption{Shared (non)edges are depicted as parallel (dashed) line-segments of different colors.  
    (a) A safe base graph with a $2$-thin $\mathcal{X}$ cover consisting of four clusters.  
    (b) The split graph and sets $X'_i$ resulting from a non-trivial safe nonedge-split on $(G,\mathcal{X})$ that splits $(a,b)$.  
    (c) The safe split cover of the split graph.  
    The green cluster in (b) contains both $a_1$ and $a_2$, and so the endpoints of each unshared edge it contains is made into a cluster in (c), which is just one edge in this case.  
    The key gluing pairs are $(a_1,b_1)$ and $(a_2,b_1)$.  
    (d) A safe ear and 2-thin cover with key gluing pairs $(a_1,b_1)$ and $(a_2,b_1)$.  
    (e) The graph and 2-thin cover resulting from a safe \SG/ on the graphs in (c) and (d).  
    }
    \label{fig:safesg}
\end{figure}

A \emph{safe \SG/} on a safe base graph $G$ and safe ear $H$ consists of a safe nonedge-split on $(G,\mathcal{X})$, yielding $G^s$, followed by a glue on $G^s$ and $H$ using their gluing vertices that bijectively identifies the key gluing pairs of $G$ with the key gluing pairs of $H$.  
Let $\mathcal{X}^s$ and $\mathcal{X}^H$ be the safe split and safe ear covers.  
This operation induces a $2$-thin cover $\mathcal{X}'$ of the resulting graph that consists of all maximal clusters of $\mathcal{X}^s \cup \mathcal{X}^H$ (specifically, clusters of size 2 in $\mathcal{X}^H$ are dropped if after gluing they belong to a cluster in $\mathcal{X}^s$).  
See Figure \ref{fig:safesg}.

\begin{lemma}[Safe \SG/ covers are independent]
    \label{lem:sg-cover-ind}
    For any safe \SG/ yielding the graph $G^s:H$ and its cover $\mathcal{X}'$, $\mathcal{X}^s$ and $\mathcal{X}'$ are independent $2$-thin covers of $G^s$ and $G^s:H$, respectively.  
\end{lemma}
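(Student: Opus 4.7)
The plan is to establish the $2$-thin covering axioms and shared-set independence for $\mathcal{X}^s$ over $G^s$ first, then leverage this together with the safe-ear hypotheses to handle $\mathcal{X}'$ over $G^s:H$; the case analysis recorded in Figure~\ref{fig:safesplit} drives the bookkeeping throughout. First I would verify that $\mathcal{X}^s$ is a $2$-thin cover of $G^s$, which reduces to four axioms. Cluster size $\ge 2$ and edge coverage are immediate from the definitions of $X'_i$, $W_i$, and the two-vertex clusters introduced in the bottom-right subcase of Figure~\ref{fig:safesplit}(c). The bound $|X'_i \cap X'_j| \le 2$ and the no-proper-containment property propagate from the corresponding properties of $\mathcal{X}$ by a subcase check following Figure~\ref{fig:safesplit}; the crucial use of safety condition~(i) is that whenever $X_i$ contains some shared nonedge in $S(\mathcal{X})$, $X'_i$ contains neither $\{a_1,a_2\}$ nor $\{b_1,b_2\}$, which caps the size of intersections after the split. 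For independence, I would observe that $(V(S(\mathcal{X}^s)), S(\mathcal{X}^s))$ is obtained from $(V(S(\mathcal{X})), S(\mathcal{X}))$ by a pair of vertex-splits at $a$ and at $b$: safety condition~(ii) forces every shared nonedge $(a,x) \in S(\mathcal{X})$ with $x \ne b$ to lift unambiguously to either $(a_1,x)$ or $(a_2,x)$ in $S(\mathcal{X}^s)$ (and analogously for $b$), which is precisely the bookkeeping needed for a well-defined vertex-split. Since $k$-vertex-splits preserve independence for $k \in \{0,1,2\}$ by Theorem~\ref{thm:prev_ops}(iv), independence of $S(\mathcal{X}^s)$ follows.

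Next, for $\mathcal{X}'$ over $G^s:H$, the $2$-thin axioms follow by combining those of $\mathcal{X}^s$ and $\mathcal{X}^H$ with the maximality clause in the definition of $\mathcal{X}'$. The key facts are that each cluster of $\mathcal{X}^s$ contains at most two gluing vertices in $\{a_1,a_2,b_1,b_2\}$ (safety condition~(i) when $|X'_i| > 2$, and directly when $|X'_i| = 2$), each cluster of $\mathcal{X}^H$ contains at most two gluing vertices (safe-ear condition~(i)), and the bijective identification of key gluing pairs preserves these bounds. For independence, note that $S(\mathcal{X}')$ is contained in $S(\mathcal{X}^s) \cup S(\mathcal{X}^H) \cup F_{\mathrm{key}}$, where $F_{\mathrm{key}}$ is the set of key gluing pairs. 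The first stage gives independence of $S(\mathcal{X}^s)$, and safe-ear condition~(ii) gives independence of $S(\mathcal{X}^H) \cup F$, where $F \supseteq F_{\mathrm{key}}$ is the complete graph on the gluing vertices of $H$. I would then merge the two independent graphs along the identified gluing vertices using Lemma~\ref{lem:abs}: the common subgraph on the (at most four) gluing vertices is already a basis on the $H$-side since $F$ is complete, and $S(\mathcal{X}^s)$ can be extended on the base side to match; Lemma~\ref{lem:abs} then yields independence of $S(\mathcal{X}^s) \cup S(\mathcal{X}^H) \cup F_{\mathrm{key}}$ in the rigidity matroid over $K_{V(G^s:H)}$, from which independence of $S(\mathcal{X}')$ follows.

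The hard part will be the merger step, since Lemma~\ref{lem:abs} requires the common edge set along the gluing vertices to be a basis of the abstract rigidity matroid on the complete graph over those vertices. On the ear side this is handed to us by safe-ear condition~(ii); on the base-graph side one must verify that $S(\mathcal{X}^s)$ admits an augmentation by gluing-vertex pairs producing a compatible basis, and the safety conditions on $(G,\mathcal{X})$ are calibrated precisely to make this augmentation exist. The remaining work is routine case analysis from Figure~\ref{fig:safesplit}; the payoff is that $\mathcal{X}^s$ and $\mathcal{X}'$ emerge as legitimate independent $2$-thin covers, setting up the rank-sandwich implied-nonedge argument of Section~\ref{sec:dra-implied}.
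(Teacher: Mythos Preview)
Your proposal is correct and follows essentially the same approach as the paper: vertex-splits on $(V(G),S(\mathcal{X}))$ to establish independence of $\mathcal{X}^s$, then a $k$-sum via Lemma~\ref{lem:abs} against $(V(H),S(\mathcal{X}^H)\cup F)$ for $\mathcal{X}'$. One tightening worth making: $(V(G^s),S(\mathcal{X}^s))$ is in general only a \emph{subgraph} of the graph $I$ produced by the vertex-splits, not equal to it---the paper works directly with $I$, choosing a $1$-vertex-split at $a$ and a $2$-vertex-split at $b$ so that $I$ already contains the full clique $K$ on the gluing vertices, which both gives the supergraph containment and absorbs your separate ``augmentation'' step in the merger.
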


\begin{proof}
    By definition, $\mathcal{X}^s$ is a $2$-thin cover of $G^s$.  
    Also, since at most two gluing vertices of the safe ear are contained in any cluster of the safe ear cover, we get that $\mathcal{X}'$ is a $2$-thin cover of $G^s:H$.  

    For independence, consider the graph $I$ obtained from $(V(G),S(\mathcal{X}))$ via a $1$-vertex split that splits $a$ into $a_1$ and $a_2$ followed by a $2$-vertex split that splits $b$ into $b_1$ and $b_2$.  
    We get that $\mathcal{X}^s$ is independent since $(V(G^s), S(\mathcal{X}^s))$ is a subgraph of $I$, $\mathcal{X}$ is independent, and these vertex splits preserve independence.  
    Lastly, let $K$ and $K'$ be cliques on the the gluing vertices of $G^s$ and $H$, respectively.  
    Observe that $(V(G^s:H),S(\mathcal{X}'))$ is a subgraph of the $k$-sum of $I$ and $(V(H),S(\mathcal{X}^H) \cup E(K'))$ such that the base cliques are $K$ and $K'$.  
    Along with the facts that $(V(H),S(\mathcal{X}^H) \cup E(K'))$ is independent, by the definition of a safe ear, and $k$-sums preserve independence, we get that $\mathcal{X}'$ is independent.  
\end{proof}

% First, we state Lemmas \ref{lem:ie-count} and \ref{lem:edge-count}.  
% Let $A$ be the subset of $\{(a_1,b_1),(a_1,b_2),(a_2,b_1),(a_2,b_2)\}$ of all pairs whose vertices are both contained in some cluster of $\mathcal{X}^s$ and let 
% \begin{align}
%     &IE(G,\mathcal{X}) = \sum_{X_i \in \mathcal{X}} rank(G_{S(\mathcal{X})}[X_i]) - \sum_{e \in S(\mathcal{X})} (d(\mathcal{X},e)-1),\nonumber\\
%     %
%     &IE(G^s,\mathcal{X}^s) = \sum_{X_i \in \mathcal{X}^s} rank(G^s_{S(\mathcal{X}^s) \cup A}[X_i]) - \sum_{e \in S(\mathcal{X}^s) \cup A} (d(\mathcal{X}^s,e)-1),\label{eqn:gs-ie}\\
%     %
%     &IE(H,\mathcal{X}^H) = \sum_{X_i \in \mathcal{X}^H} rank(H_{S(\mathcal{X}^H) \cup A}[X_i]) - \sum_{e \in S(\mathcal{X}^H) \cup A} (d(\mathcal{X}^H,e)-1)\label{eqn:h-ie}.
% \end{align}

% For any safe \SG/ on a clustered graph $(G,\mathcal{X})$, recall from Section \ref{sec:contributions} the subset $A$ of $\{(a_1,b_1),(a_1,b_2),(a_2,b_1),(a_2,b_2)\}$ of all pairs whose vertices are both contained in some cluster of $\mathcal{X}^s$, and the values $IE(G,\mathcal{X})$, $IE(G^s,\mathcal{X}^s)$, and $IE(H,\mathcal{X}^H)$.  
% We define 
% \begin{align*}
%     IE(G^{sg},\mathcal{X}^{sg}) = \sum_{X_i \in \mathcal{X}^{sg}} rank(G_{S(\mathcal{X}^{sg})}[X_i]) - \sum_{e \in S(\mathcal{X}^{sg})} (d(\mathcal{X},e)-1).  
% \end{align*}

\begin{figure}
    \centering
    \begin{tikzpicture}
        \node[draw] (rank) at (0,0) {$rank(G^s:H)$};

        \node[draw] (edge) at (0,-1.5) {$|E(G^s:H)|$};

        \node[draw] (baseedge) at (-3,-3) {$|E(G)|$};
        \node (pedge) at (0,-3) {\huge$+$};
        \node[draw] (earedge) at (3,-3) {$|E(H)|$};

        \node[draw] (baserank) at (-3,-4.5) {$rank(G)$};
        \node (prank) at (0,-4.5) {\huge$+$};
        \node[draw] (earrank) at (3,-4.5) {$rank(H)$};

        \node[draw] (baseie) at (-3,-6) {$IE(G,\mathcal{X})$};
        
        \node[draw] (splitie) at (-3,-7.5) {$IE'(G^s,\mathcal{X}^s) - |F| + 1$};
        \node (pie) at (0,-7.5) {\huge$+$};
        \node[draw] (earie) at (3,-7.5) {$IE'(H,\mathcal{X}^H) - 1$};
        
        \node[draw] (ie) at (0,-9) {$IE(G^s:H,\mathcal{X}')$};

        \draw[draw=none] (rank) -- node[sloped] {\huge$=$} (edge);
        \draw[draw=none] (rank) -- node[right,xshift=4pt] {independence} (edge);
        
        \draw[draw=none] (edge) -- node[sloped] {\huge$=$} (pedge);
        \draw[draw=none] (edge) -- node[right,xshift=4pt] {construction} (pedge);

        \draw[draw=none] (baseedge) -- node[sloped] {\huge$=$} (baserank);
        \draw[draw=none] (baseedge) -- node[right,xshift=4pt] {independence} (baserank);

        \draw[draw=none] (earedge) -- node[sloped] {\huge$=$} (earrank);
        \draw[draw=none] (earedge) -- node[right,xshift=4pt] {independence} (earrank);

        \draw[draw=none] (earrank) -- node[sloped] {\huge$=$} (earie);
        \draw[draw=none] (earrank) -- node[right,xshift=4pt] {safe ear} (earie);
        
        \draw[draw=none] (baserank) -- node[sloped] {\huge$=$} (baseie);
        \draw[draw=none] (baserank) -- node[right,xshift=4pt] {safe base graph} (baseie);

        \draw[draw=none] (earrank) -- node[sloped] {\huge$=$} (earie);
        \draw[draw=none] (earrank) -- node[right,xshift=4pt] {safe ear} (earie);
        
        \draw[draw=none] (baseie) -- node[sloped] {\huge$=$} (splitie);
        \draw[draw=none] (baseie) -- node[right,xshift=4pt] {Lemma \ref{lem:edge-count} proof} (splitie);

        \draw (5,0.2) -- node[right,xshift=4pt] {Lemma \ref{lem:edge-count}} (5,-7.7);
        
        \draw[draw=none] (pie) -- node[sloped] {\huge$=$} (ie);
        \draw[draw=none] (pie) -- node[right,xshift=4pt] {Lemma \ref{lem:ie-count}} (ie);
    \end{tikzpicture}
    \caption{Proof outline for Theorem \ref{thm:sg-implied} in Section \ref{sec:dra-implied}.}
    \label{fig:implied-outline}
\end{figure}

The remainder of the proof of Theorem \ref{thm:sg-implied} is outlined in Figure \ref{fig:implied-outline}.  
For any safe \SG/ yielding the graph $G^s:H$ and its cover $\mathcal{X}'$, let $IE'(G^s,\mathcal{X}^s)$ be obtained from $IE(G^s,\mathcal{X}^s)$ by adding all key gluing pairs of $G^s$ to $S(\mathcal{X}^s)$ in the summations.  

\begin{lemma}[Rank of safe \SG/ graph]
    \label{lem:edge-count}
    Consider a safe \SG/ yielding the graph $G^s:H$ and its cover $\mathcal{X}'$, and let $F$ be the set of key gluing pairs of $G^s$.  
    If $G^s:H$ is independent, then
    $$rank(G^s:H) = IE'(G^s,\mathcal{X}^s) + IE'(H,\mathcal{X}^H) - |F|.$$
\end{lemma}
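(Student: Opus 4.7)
The plan is to string together the equalities indicated in the outline of Figure \ref{fig:implied-outline}. By the independence hypothesis, $rank(G^s:H) = |E(G^s:H)|$. The safe \SG/ construction gives $|E(G^s:H)| = |E(G)| + |E(H)|$: the nonedge-split only redistributes the edges incident to $a$ and $b$ among $a_1, a_2, b_1, b_2$ without creating or deleting any edges, and the glue identifies gluing vertices without producing multi-edges, since the key gluing pairs are nonedges in both $G^s$ and $H$. Independence of $G$ and $H$ (forced by the safe base graph and safe ear definitions) then gives $|E(G)| = rank(G)$ and $|E(H)| = rank(H)$, and those same definitions give $rank(G) = IE(G, \mathcal{X})$ and $rank(H) = IE'(H, \mathcal{X}^H) - 1$.

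These four ingredients reduce the lemma to the combinatorial identity
\[
IE(G, \mathcal{X}) \;=\; IE'(G^s, \mathcal{X}^s) \;-\; |F| \;+\; 1.
\]
The plan is to prove this identity directly. Because the safe base graph condition forces the rank-sandwich inequality of Lemma \ref{thm:2-thin-rank} to be tight, the induced subgraphs appearing in $IE(G, \mathcal{X})$ are independent, so their ranks equal their edge counts; the analogous tightness holds on the $\mathcal{X}^s$ side, using the independence of $\mathcal{X}^s$ established in Lemma \ref{lem:sg-cover-ind}. Both sides therefore reduce to explicit edge-and-pair counts.

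The hard part is verifying this bookkeeping identity cluster by cluster, using the five cases of Figure \ref{fig:safesplit}. In each case, the local rank term for $X_i$ matches the sum of local rank terms for the cluster(s) of $\mathcal{X}^s$ it becomes, up to corrections controlled by the way $\{a_1, a_2, b_1, b_2\}$ occurs in the new clusters. Globally, the only shared pair of $\mathcal{X}$ that is destroyed by the split is $(a, b)$, whose loss contributes $+1$ to $IE'(G^s, \mathcal{X}^s)$ relative to $IE(G, \mathcal{X})$, while the only pairs newly appearing in the second summation of $IE'$ but not already in $S(\mathcal{X}^s)$ are the $|F|$ key gluing pairs, contributing $-|F|$. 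Conditions (i) and (ii) in the definition of a safe nonedge-split, together with the choice of $F$, are tailored precisely so that these are the only net changes, yielding the required identity and hence the lemma.
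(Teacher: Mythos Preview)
Your reduction to the identity $IE(G,\mathcal{X}) = IE'(G^s,\mathcal{X}^s) - |F| + 1$ and the plan to verify it cluster by cluster via the cases of Figure~\ref{fig:safesplit} is exactly the paper's strategy. However, two of the steps you sketch are incorrect as stated.

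First, tightness of Lemma~\ref{thm:2-thin-rank} does \emph{not} force the induced subgraphs $G_{S(\mathcal{X})}[X_i]$ to be independent: in the ring of butterflies each such subgraph is a $K_5$, hence a circuit. Tightness only tells you that the restriction to $X_i$ of a maximal independent subgraph of $G_{S(\mathcal{X})}$ is a basis of $G_{S(\mathcal{X})}[X_i]$, not that the latter is itself independent. The paper therefore does not convert ranks to edge counts globally. For the clusters that survive the split it uses instead that $G_{S(\mathcal{X})}[X_i]$ and $G^s_{S(\mathcal{X}^s)\cup F}[X'_i]$ are isomorphic, so the rank terms match directly; only the \emph{destroyed} clusters (case~(c), bottom right) are handled via edge counts, and there it works because such an $X_i$ contains no nonedge of $S(\mathcal{X})$, whence $G_{S(\mathcal{X})}[X_i]=G[X_i]$ is independent as a subgraph of $G$. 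Your claim of ``analogous tightness on the $\mathcal{X}^s$ side'' is likewise unavailable at this point, since nothing yet equates $rank(G^s)$ with any $IE$ quantity.

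Second, your accounting of the shared-pair correction has the wrong sign. Removing $(a,b)$ deletes the subtracted term $d(\mathcal{X}_{pres},(a,b))-1$, not $1$; adding the key gluing pairs inserts $\sum_{e\in F}(d(\mathcal{X}^s,e)-1)$. The identity that makes the bookkeeping close is
\[
d(\mathcal{X}_{pres},(a,b))-1 \;=\; \sum_{e\in F}\bigl(d(\mathcal{X}^s,e)-1\bigr) + |F| - 1,
\]
which comes from the fact that each preserved cluster through $(a,b)$ now contains exactly one pair of $F$. This gives a net change of $+(|F|-1)$ to $IE'(G^s,\mathcal{X}^s)$ relative to $IE(G,\mathcal{X})$, the opposite of the $+1-|F|$ your sketch produces.
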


\begin{proof}
    Assume $G^{sg} = G^s:H$ is independent.  
    Then, $rank(G^{sg}) = |E(G^{sg})|$ and $H$ is independent, and hence $rank(H) = |E(H)|$.  
    Also, since $G$ is independent, by definition, we have $rank(G) = |E(G)|$.  
    Combining these facts with the observation that $|E(G^{sg})| = |E(G)| + |E(H)|$ and the rank conditions on $G$ and $H$ tells us that 
    \begin{align*}
        rank(G^{sg}) = IE(G,\mathcal{X}) + IE'(H,\mathcal{X}^H) - 1.
    \end{align*}
    Hence, it suffices to show that $IE(G,\mathcal{X}) = IE'(G^s,\mathcal{X}^s) - |F| + 1$.  
    Let the safe \SG/ split the nonedge $(a,b)$ into the nonedges $(a_1,b_1)$ and $(a_2,b_2)$.  
    % For any cluster $X'_i \in \mathcal{X}^s$ of size at least three, let $X_i \in \mathcal{X}$ be the cluster obtained from $X'_i$ by replacing $a_1$ and $a_2$ with $a$ and $b_1$ and $b_2$ with $b$, if $X'_i$ contains any of these vertices.  
    Let $\mathcal{X}_{pres} \subseteq \mathcal{X}$ be the set of all clusters that can be obtained from some cluster in $\mathcal{X}^s$ of size at least three by replacing $a_1$ or $a_2$ with $a$ and replacing $b_1$ or $b_2$ with $b$, if this cluster contains any of these vertices (i.e., $\mathcal{X}_{pres}$ contains all non-trivial clusters ``preserved'' by the split).  
    Also, let $E(\mathcal{X}_{pres})$ be the set of edges of $G$ induced by the union of all sets in $\mathcal{X}_{pres}$ and let $S(\mathcal{X}_{pres}) \subseteq S(\mathcal{X})$ contain all pairs in the intersection of at least two sets in $\mathcal{X}_{pres}$.  
    First, we show that
    \begin{align}
        \begin{split}
            \label{eqn:5}
            IE(G,\mathcal{X}) &= 
            \sum_{X_i \in \mathcal{X}_{pres}} rank(G_{S(\mathcal{X})}[X_i]) 
            + \sum_{e \in E(G) \setminus E(\mathcal{X}_{pres})} 1\\ 
            &- \sum_{e \in S(\mathcal{X}_{pres})} (d(\mathcal{X}_{pres},e) - 1). 
        \end{split}
    \end{align}
    Let $\mathcal{X}_{triv} \subseteq \mathcal{X}$ be the set of all size two clusters (i.e. trivial clusters), $\mathcal{X}_{dest} = \mathcal{X} \setminus (\mathcal{X}_{pres} \cup \mathcal{X}_{triv})$ (i.e. non-trivial clusters ``destroyed'' by the split), and $\overline{S(\mathcal{X}_{pres})} = S(\mathcal{X}) \setminus S(\mathcal{X}_{pres})$.  
    Then, 
    \begin{align*}
        IE(G,\mathcal{X}) &= 
        \sum_{X_i \in \mathcal{X}_{pres}} rank(G_{S(\mathcal{X})}[X_i]) 
        - \sum_{e \in S(\mathcal{X}_{pres})} (d(\mathcal{X}_{pres},e) + d(\mathcal{X}_{dest},e) - 1)\\ 
        &+ \sum_{X_i \in \mathcal{X}_{dest}} rank(G_{S(\mathcal{X})}[X_i]) 
        + \sum_{X_i \in \mathcal{X}_{triv}} 1\\ 
        &- \sum_{e \in \overline{S(\mathcal{X}_{pres})}} (d(\mathcal{X}_{pres},e) + d(\mathcal{X}_{dest},e) - 1).  
    \end{align*}
    Hence, we must show that 
    \begin{align}
        \begin{split}
            \label{eqn:6}
            \sum_{e \in E(G) \setminus E(\mathcal{X}_{pres})} 1 &= 
            \sum_{X_i \in \mathcal{X}_{dest}} rank(G_{S(\mathcal{X})}[X_i]) 
            + \sum_{X_i \in \mathcal{X}_{triv}} 1\\ 
            &- \sum_{e \in S(\mathcal{X}_{pres})} (d(\mathcal{X}_{dest},e))
            - \sum_{e \in \overline{S(\mathcal{X}_{pres})}} (d(\mathcal{X}_{pres},e) + d(\mathcal{X}_{dest},e) - 1).
        \end{split}
    \end{align}
    For each $X_i \in \mathcal{X}_{triv}$, there is only one edge in $G[X_i]$, and it contributes $1$ to both the LHS and RHS of \ref{eqn:6}.  
    For each $X_i \in \mathcal{X}_{dest}$, the definition of $\mathcal{X}^s$ implies that $X_i$ does not contain the endpoints of any nonedge in $S(\mathcal{X})$, and so $G[X_i] = G_{S(\mathcal{X})}[X_i]$.  
    Consequently, the remaining contributions to the RHS are from edges $e$ of $G[X_i]$ where $X_i \in \mathcal{X}_{dest}$.  
    Note that $G[X_i]$ is independent since $G$ is, and so $e$ contributes $d(\mathcal{X}_{dest},e)$ to the first sum on the RHS.  
    If $e$ is contained in $S(\mathcal{X}_{pres})$, then it contributes $d(\mathcal{X}_{dest},e)$ to the third sum on the RHS, and $0$ overall to the RHS.  
    Otherwise, the definition of $\overline{S(\mathcal{X}_{pres})}$ shows that $d(\mathcal{X}_{pres},e)$ is either $0$ or $1$, and so $e$ contributes $1$ in the former case and $0$ in the latter case to the RHS overall.  
    This shows that the RHS is equal to the number of edges in $E(G) \setminus E(\mathcal{X}_{pres})$, which is the LHS, as desired.  

    Next, let $\mathcal{X}'_{pres} \subseteq \mathcal{X}^s$ be the set of all clusters of size at least three and let $E(\mathcal{X}'_{pres})$ be the set of edges of $G^s$ induced by the union of all sets in $\mathcal{X}'_{pres}$.  
    % , and let $S(\mathcal{X}'_{pres}) \subseteq S(\mathcal{X}^s)$ be the set of all pairs contained in at least two sets in $\mathcal{X}'_{pres}$.  
    Since any cluster in $\mathcal{X}^s \setminus \mathcal{X}'_{pres}$ is exactly the endpoints of some edge of $G^s$ and $F$ is a set of nonedges in $G^s$, we have 
    \begin{align}
        \begin{split}
            \label{eqn:7}
            IE'(G^s,\mathcal{X}^s) &= 
            \sum_{X_i \in \mathcal{X}'_{pres}} rank(G^s_{S(\mathcal{X}^s) \cup F}[X_i]) 
            + \sum_{e \in E(G^s) \setminus E(\mathcal{X}'_{pres})} 1\\
            &- \sum_{e \in S(\mathcal{X}^s) \cup F} (d(\mathcal{X}^s,e)-1).  
        \end{split}
    \end{align}
    For any cluster $X_i \in \mathcal{X}_{pres}$ and its corresponding cluster $X'_i \in \mathcal{X}'_{pres}$, the definition of a safe \SG/ ensures that $G_{S(\mathcal{X})}[X_i]$ and $G^s_{S(\mathcal{X}^s) \cup F}[X'_i]$ are isomorphic.  
    Hence, the first two summations in \ref{eqn:5} are equal to the first two summations in \ref{eqn:7}, respectively.  
    Also, observe that 
    \begin{align*}
        &\sum_{e \in S(\mathcal{X}_{pres}) \setminus \{(a,b)\}} (d(\mathcal{X}_{pres},e) - 1) = 
        \sum_{e \in S(\mathcal{X}^s) \setminus F} (d(\mathcal{X}^s,e)-1),\\ 
        &d(\mathcal{X}_{pres},(a,b)) - 1 = \sum_{e \in F} (d(\mathcal{X}^s,e) - 1) + |F| - 1.
    \end{align*}
    Combining the above equalities completes the proof.  
\end{proof}

\begin{lemma}[IE count of safe \SG/ graph]
    \label{lem:ie-count}
    Consider a safe \SG/ yielding the graph $G^s:H$ and its cover $\mathcal{X}'$, and let $F$ be the set of key gluing pairs of $G^s$.  
    The following holds:
    $$IE(G^s:H,\mathcal{X}') = IE'(G^s,\mathcal{X}^s) + IE'(H,\mathcal{X}^H) - |F|.$$
\end{lemma}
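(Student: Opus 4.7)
The plan is to expand both sides of the claimed identity using the definitions of $IE$ and $IE'$ and match the contributions by careful bookkeeping. Unlike Lemma~\ref{lem:edge-count}, this identity is purely combinatorial and requires no independence hypothesis. Since the safe \SG/ bijectively identifies the key gluing pairs of $G^s$ with those of $H$, I would regard $F$ as the common set of key gluing pairs after identification, so that both $IE'(G^s,\mathcal{X}^s)$ and $IE'(H,\mathcal{X}^H)$ are augmented by the same set $F$.

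I would first partition $\mathcal{X}'$ by origin. Because $\mathcal{X}'$ consists of the maximal clusters of $\mathcal{X}^s \cup \mathcal{X}^H$, every cluster of $\mathcal{X}^s$ survives, and the only clusters of $\mathcal{X}^H$ omitted form a set $\mathcal{X}^H_{\mathrm{drop}}$ of size-$2$ clusters, each equal to $\{u,v\}$ for some key gluing pair $f=(u,v)\in F$ subsumed by an $\mathcal{X}^s$-cluster. I would similarly classify each pair appearing in any of the three shared-set sums as lying in $S(\mathcal{X}^s)\setminus F$, in $S(\mathcal{X}^H)\setminus F$, or in $F$; this uses that pairs with a non-gluing endpoint cannot cross between the $G^s$ and $H$ sides, so the three classes are exhaustive and disjoint.

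For the rank portion I would argue cluster-by-cluster. For $X\in\mathcal{X}^s$, every edge of $G^s:H$ with both endpoints in $X$ is either an edge of $G^s[X]$ or an $H$-edge between gluing vertices of $X$, and the latter is a key gluing pair in $F$; every pair of $S(\mathcal{X}')$ inside $X$ lies in $S(\mathcal{X}^s)$ or in $F$. Consequently $(G^s:H)_{S(\mathcal{X}')}[X]$ and $G^s_{S(\mathcal{X}^s)\cup F}[X]$ have the same edge set, hence the same rank. A symmetric argument handles $Y\in\mathcal{X}^H\setminus\mathcal{X}^H_{\mathrm{drop}}$. Each dropped cluster $Y_f\in\mathcal{X}^H_{\mathrm{drop}}$ contributes rank $1$ to $IE'(H,\mathcal{X}^H)$ via the $F$-augmented edge $f$ (using the convention that size-$2$ clusters of a $2$-thin cover are edges of the ambient graph) while contributing $0$ on the LHS, so the rank portion of the RHS exceeds that of the LHS by exactly $|\mathcal{X}^H_{\mathrm{drop}}|$.

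For the subtractive $(d-1)$ portion, pairs in $S(\mathcal{X}^s)\setminus F$ and $S(\mathcal{X}^H)\setminus F$ have identical $d$-counts on their native cover and on $\mathcal{X}'$, contributing equally on both sides. For each $f\in F$, one checks $d(\mathcal{X}',f) = d(\mathcal{X}^s,f) + d(\mathcal{X}^H,f) - \mathbf{1}[f\in F_{\mathrm{drop}}]$, where $F_{\mathrm{drop}}$ indexes $\mathcal{X}^H_{\mathrm{drop}}$. A short case split on whether $f$ is dropped (and whether its $\mathcal{X}'$-count reaches $2$ so as to place it in $S(\mathcal{X}')$) shows that $f$ contributes $-1$ to $(d-1)_{G^s}+(d-1)_H-(d-1)_{\mathcal{X}'}$ when $f\notin F_{\mathrm{drop}}$ and $0$ when $f\in F_{\mathrm{drop}}$, summing to $-(|F|-|\mathcal{X}^H_{\mathrm{drop}}|)$. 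Using $IE=\text{rank sum}-(d-1)\text{ sum}$ and combining gives $IE'(G^s,\mathcal{X}^s)+IE'(H,\mathcal{X}^H)-IE(G^s:H,\mathcal{X}') = |\mathcal{X}^H_{\mathrm{drop}}| + (|F|-|\mathcal{X}^H_{\mathrm{drop}}|) = |F|$, which rearranges to the stated identity. The main obstacle will be the consistent case analysis for $f\in F_{\mathrm{drop}}$: one must invoke the size-$2$-cluster-is-edge convention to ensure $f$ is actually present in $(G^s:H)_{S(\mathcal{X}')}[X]$ for the subsuming cluster $X$, so that this cluster's LHS and RHS ranks still coincide despite $f$ possibly failing to lie in $S(\mathcal{X}')$.
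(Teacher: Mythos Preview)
Your approach is essentially the same as the paper's: expand $IE(G^s{:}H,\mathcal{X}')$ by splitting the cluster sum according to whether a cluster originates in $\mathcal{X}^s$ or $\mathcal{X}^H$, verify that each cluster's induced rank term coincides with the corresponding term in $IE'(G^s,\mathcal{X}^s)$ or $IE'(H,\mathcal{X}^H)$, and then match the $(d-1)$ contributions pair-by-pair, isolating the excess $+1$ per key gluing pair that yields the $-|F|$. The paper records exactly these three observations (its equation for $IE(G^{sg},\mathcal{X}')$ and the three displayed identities following it), and your case split on $F_{\mathrm{drop}}$ recovers the paper's identity $d(\mathcal{X}',e)-1=(d(\mathcal{X}^s,e)-1)+(d(\mathcal{X}^H,e)-1)+1$ in the non-dropped case.

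Your treatment is in fact more careful than the paper's on one point: the paper's expansion silently sums over all of $\mathcal{X}^H$ rather than $\mathcal{X}^H\setminus\mathcal{X}^H_{\mathrm{drop}}$, whereas you explicitly track the $|\mathcal{X}^H_{\mathrm{drop}}|$ discrepancy in both the rank sum and the $(d-1)$ sum and show they cancel. The one place to be cautious is your invocation of the ``size-$2$ clusters are edges'' convention for $\mathcal{X}^H$: the paper only asserts this for $\mathcal{X}^s$, so in the degenerate situation where a dropped cluster $Y_f$ is a nonedge of $H$ and $d(\mathcal{X}^s,f)=d(\mathcal{X}^H,f)=1$, the edge-set equality $(G^s{:}H)_{S(\mathcal{X}')}[X]=G^s_{S(\mathcal{X}^s)\cup F}[X]$ may fail for the subsuming $X$. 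This edge case is not handled in the paper's proof either, and does not arise in any of the concrete safe ears used (e.g.\ the double-butterfly cover has no size-$2$ clusters), so your proof is at least as complete as the original.
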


\begin{proof}
    Let $G^{sg} = G^s:H$ and observe that 
    \begin{align}
        \begin{split}
            \label{eqn:sg-expanded}
            IE(G^{sg},\mathcal{X}') 
            % &= \sum_{X_i \in \mathcal{X}^{sg}} rank(G^{sg}_{S(\mathcal{X}^{sg})}) - \sum_{e \in S(\mathcal{X}^{sg})} (d(\mathcal{X}^{sg},e)-1)\\ 
            &= \sum_{X_i \in \mathcal{X}^s} rank(G^{sg}_{S(\mathcal{X}')}[X_i]) + \sum_{X_i \in \mathcal{X}^H} rank(G^{sg}_{S(\mathcal{X}')}[X_i])\\ 
            &- \sum_{e \in S(\mathcal{X}')} d(\mathcal{X}^s,e) - \sum_{e \in S(\mathcal{X}')} (d(\mathcal{X}^H,e)-1).
        \end{split}
    \end{align}
    The lemma follows from the following three observations.  
    Let $F'$ be the set of key gluing pairs of $(H,\mathcal{X}^H)$.  
    First, the definition of a safe \SG/ ensures that, for any $X_i \in \mathcal{X}^s$, $G^{sg}_{S(\mathcal{X}')}[X_i] = G^s_{S(\mathcal{X}^s) \cup F}[X_i]$ and, for any $X_i \in \mathcal{X}^H$, $G^{sg}_{S(\mathcal{X}')}[X_i] = H_{S(\mathcal{X}^H) \cup F'}[X_i]$.  
    Second, since each pair in $S(\mathcal{X}') \setminus F$ corresponds to some pair in either $S(\mathcal{X}^s) \setminus F$ or $S(\mathcal{X}^H) \setminus F'$ but not both, we have 
    \begin{align*}
        &\sum_{e \in S(\mathcal{X}') \setminus F} d(\mathcal{X}^s,e)
        + \sum_{e \in S(\mathcal{X}') \setminus F} (d(\mathcal{X}^H,e)-1)\\ 
        &= \sum_{e \in S(\mathcal{X}^s) \setminus F} (d(\mathcal{X}^s,e)-1) + \sum_{e \in S(\mathcal{X}^H) \setminus F'} (d(\mathcal{X}^H,e)-1).  
    \end{align*}
    Third, for any pair $e \in F$ and the corresponding pair $e' \in F'$, we have $d(\mathcal{X}',e) - 1 = (d(\mathcal{X}^s,e) - 1) + (d(\mathcal{X}^H,e') - 1) + 1$.  
    Note that the ``$+1$'' term in this last observation is responsible for the ``$-|F|$'' term in the Lemma statement.  
    This completes the proof.  
\end{proof}

Finally, we prove Theorem \ref{thm:sg-implied}.  

\begin{proof}[Proof of Theorem \ref{thm:sg-implied}]
    Consider any safe \SG/ yielding the graph $G^s:H$ and its cover $\mathcal{X}'$, and let $G^{sg} = G^s:H$.  
    The definitions of a safe ear and safe \SG/ ensure that some key gluing pair of $G^s$ is a nonedge of $G^{sg}$ that is contained in the shared set $S(\mathcal{X}')$.  
    We will show that any nonedge $f$ in $S(\mathcal{X}')$ is implied in $G^{sg}$ using the rank-sandwich technique.  
    Since Lemma \ref{lem:sg-cover-ind} shows that $\mathcal{X}'$ is independent, and $\mathcal{X}'$ is clearly a $2$-thin cover of $G^{sg} \cup f$, Lemma \ref{thm:2-thin-rank} shows that both $rank(G^{sg})$ and $rank(G^{sg} \cup f)$ are upper-bounded by $IE(G^{sg},\mathcal{X}')$.  
    Additionally, Lemmas \ref{lem:edge-count} and \ref{lem:ie-count} show that $rank(G^{sg}) = IE(G^{sg},\mathcal{X}')$.  
    Thus, we have $rank(G^{sg}) = rank(G^{sg} \cup f)$, and so $f$ is implied in $G^{sg}$.  
\end{proof}

\subsection{Constructing safe base graphs}
\label{sec:safe_base}

Before we prove Theorem \ref{thm:starter}, we give several examples of \emph{seed graphs}, which are small fixed size safe base graphs that cannot be constructed using any of the inductive constructions in the previous sections.  
The motivation for these examples is to provide explicit graphs that can be enlarged by repeatedly applying Theorem \ref{thm:starter}.  
% 
% Independence for these seed graphs can be checked via symbolic computation, and the existence of implied nonedges follows from the rank-sandwich technique.  
\begin{itemize}
    \item \textbf{Modified octahedral ring:} a ring of between 7 and 10 octahedral graphs, as shown on the left in Figure \ref{fig:octaForstarting}, such that the double-dashed edges are deleted and used as hinges.  
    
    \item \textbf{Modified icosahedral ring:} a ring of between $7$ and $10$ icosahedral graphs, as shown on the right in Figure \ref{fig:octaForstarting}, such that one double-dashed edge is deleted, and the hinges are the deleted edge and a dashed nonedge that does not share a vertex with the edge.  

    \item \textbf{Two icosahedral-sharing rings:} two modified icosahedral 7-rings that intersect on two consecutive links, as in Figure \ref{fig:ring_k12} where links are circles and hinges are nonedges shown as dashed line-segments.  
    For the lower shared icosahedral link, the edges deleted and used as hinges in the left and right rings are distinct and vertex-disjoint.  
    % such that one shared link has exactly $2$ independent flexes and all other links have exactly $1$ flex.  
    % For example, Figure \ref{fig:ring_k12} shows such a graph -- links are circles and hinges are nonedges shown as dashed line-segments -- in which exactly two consecutive links are shared and all links are either octahedral or icosahedral graphs.  
    % The link with three hinges uses two double-dashed edges and one dashed nonedge in Figure \ref{fig:octaForstarting} as hinges, while all other links use one double-dashed edge and one dashed nonedge as hinges (where no two hinges share a vertex and the double-dashed edges used as hinges are deleted).  

    \item \textbf{Four icosahedral-sharing rings:} refer to the graph in Figure \ref{fig:otherScheme}, which is constructed from four rings with 8 links each.  
    % of many rings where links are circles and hinges are noneedges shown as dashed line-segments.  
    The small circles are butterflies whose hinges are nonedges that do not share vertices.  
    The four large circles are icosahedral graphs.  
    The hinges of $G_1$ and $G_3$ are the three double-dashed edges and one dashed nonedge that does not share a vertex with these edges in Figure \ref{fig:octaForstarting}.  
    The hinges of $G_2$ and $G_4$ are one double-dashed edge and three dashed nonedges
    No two hinges share a vertex and the double-dashed edges used as hinges are deleted.  
    Note that $G_1$ and $G_3$ each have $27$ edges and exactly $3$ independent flexes, and $G_2$ and $G_4$ each have $29$ edges and exactly $1$ independent flex.  
\end{itemize}
For each of the above seed graphs, the properties of a safe base graph are verified as follows.  
Since these are relatively small graphs, independence can be checked either by computing the rank of a random framework or via symbolic computation, to check that at least one monomial of the symbolic determinantal polynomial has a nonzero coefficient.  
However, the former method of checking if a single scalar determinant value is nonzero is subject to rounding errors in the numerical computation and while the latter could be intractable even for graphs of these small sizes if coordinates are assigned distinct variables,   
 it can be made tractable for an ansatz framework in a special position with several vertex coordinate variables equal, which results in  tractable monomial expansion of the symbolic determinantal polynomials. 
Next, for each seed graph, taking the collection of vertex sets of all links is clearly an independent $2$-thin cover.  
Lastly, the rank condition follows from independence, which implies the rank is equal to the number of edges, and Lemma \ref{thm:2-thin-rank}.  

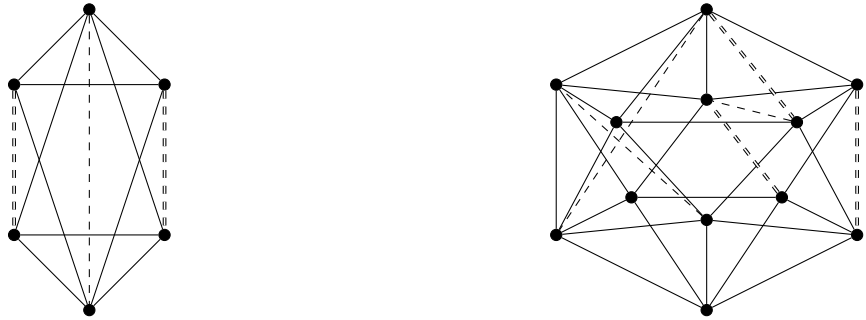
\begin{figure}[htb]
    \centering
    \begin{subfigure}[t]{0.49\linewidth}
        \centering
        \begin{tikzpicture}
            \node[draw,circle,fill,inner sep=-1.5pt] (a1) at (0,0) {};

            \node[draw,circle,fill,inner sep=-1.5pt,] (c1) at (1,1) {};

            \node[draw,circle,fill,inner sep=-1.5pt] (a2) at (2,0) {};

            \node[draw,circle,fill,inner sep=-1.5pt] (b1) at (0,-2) {};

            \node[draw,circle,fill,inner sep=-1.5pt] (b2) at (2,-2) {};

            \node[draw,circle,fill,inner sep=-1.5pt] (c2) at (1,-3) {};

            \draw (c1) -- (a1);
            \draw (c1) -- (a2);
            \draw (c1) -- (b1);
            \draw (c1) -- (b2);
            \draw[dashed] (c1) -- (c2);

            \draw (c2) -- (a1);
            \draw (c2) -- (a2);
            \draw (c2) -- (b1);
            \draw (c2) -- (b2);

            \draw (a1) -- (a2);
            \draw (b1) -- (b2);

            \draw[double, dashed] (a1) -- (b1);
            \draw[double, dashed] (a2) -- (b2);
        \end{tikzpicture}
    \end{subfigure}
    \begin{subfigure}[t]{0.49\linewidth}
        \centering
        \begin{tikzpicture}
            \node[draw,circle,fill,inner sep=-1.5pt] (v1) at (0,0) {};
            \node[draw,circle,fill,inner sep=-1.5pt] (v2) at (0,-2) {};
            \node[draw,circle,fill,inner sep=-1.5pt] (v3) at (-2,1) {};
            \node[draw,circle,fill,inner sep=-1.5pt] (v4) at (-0.8,-0.5) {};
            \node[draw,circle,fill,inner sep=-1.5pt] (v5) at (-1,-1.5) {};
            \node[draw,circle,fill,inner sep=-1.5pt] (v6) at (-2,-0.2) {};
            \node[draw,circle,fill,inner sep=-1.5pt] (v7) at (-3,-1.5) {};
            \node[draw,circle,fill,inner sep=-1.5pt] (v8) at (-3.2,-0.5) {};
            \node[draw,circle,fill,inner sep=-1.5pt] (v9) at (-2,-1.8) {};
            \node[draw,circle,fill,inner sep=-1.5pt] (v10) at (-2,-3) {};
            \node[draw,circle,fill,inner sep=-1.5pt] (v11) at (-4,-2) {};
            \node[draw,circle,fill,inner sep=-1.5pt] (v12) at (-4,0) {};

            \draw[double, dashed] (v1) -- (v2);
            \draw (v1) -- (v3);
            \draw (v1) -- (v4);
            \draw (v1) -- (v5);
            \draw (v1) -- (v6);

            \draw (v2) -- (v4);
            \draw (v2) -- (v5);
            \draw (v2) -- (v9);
            \draw (v2) -- (v10);

            \draw[double, dashed] (v3) -- (v4);
            \draw (v3) -- (v6);
            \draw (v3) -- (v8);
            \draw (v3) -- (v12);

            \draw (v4) -- (v8);
            \draw (v4) -- (v9);

            \draw[double, dashed] (v5) -- (v6);
            \draw (v5) -- (v7);
            \draw (v5) -- (v10);

            \draw (v6) -- (v7);
            \draw (v6) -- (v12);

            \draw (v7) -- (v10);
            \draw (v7) -- (v11);
            \draw (v7) -- (v12);

            \draw (v8) -- (v9);
            \draw (v8) -- (v11);
            \draw (v8) -- (v12);

            \draw (v9) -- (v10);
            \draw (v9) -- (v11);

            \draw (v10) -- (v11);

            \draw (v11) -- (v12);

            \draw[dashed] (v9) -- (v12);
            \draw[dashed] (v3) -- (v11);
            \draw[dashed] (v4) -- (v6);
        \end{tikzpicture}
    \end{subfigure}
    \caption{Left: an octahedral graph with two double-dashed edges and one dashed nonedge.  
    Right: an icosahedral graph with three double-dashed edges and one dashed nonedge.  
    See the example seed graphs in Section \ref{sec:safe_base}.}
    \label{fig:octaForstarting}
\end{figure}

\begin{figure}[htb]
    \centering
    \begin{tikzpicture}[scale=0.5]
        \draw (0,0) circle (1.5);
        \draw (0,-2) circle (1.5);
        
        \draw (-1.6,-3.8) circle (1.5);
        \draw (-3.7,-3.3) circle (1.5);
        \draw (-4.2,-1) circle (1.5);
        \draw (-3.2,1.3) circle (1.5);
        \draw (-1,1.7) circle (1.5);

        \draw (1.6,-3.8) circle (1.5);
        \draw (3.7,-3.3) circle (1.5);
        \draw (4.2,-1) circle (1.5);
        \draw (3.2,1.3) circle (1.5);
        \draw (1,1.7) circle (1.5);

        \draw[dashed] (-0.5,-1) -- (0.5,-1);
        \draw[dashed] (0,0.9) -- (0,1.4);
        
        \draw[dashed] (-1.2,-2.6) -- (-0.5,-3.2);
        \draw[dashed] (-2.5,-2.9) -- (-2.8,-4.1);
        \draw[dashed] (-4.5,-2.3) -- (-3.3,-2);
        \draw[dashed] (-4.2,0.38) -- (-3.3,0);
        \draw[dashed] (-2.2,2.1) -- (-2,1);

        \draw[dashed] (1.2,-2.6) -- (0.5,-3.2);
        \draw[dashed] (2.5,-2.9) -- (2.8,-4.1);
        \draw[dashed] (4.5,-2.3) -- (3.3,-2);
        \draw[dashed] (4.2,0.38) -- (3.3,0);
        \draw[dashed] (2.2,2.1) -- (2,1);
    \end{tikzpicture}
    \caption{Two icosahedral-sharing rings which form a seed graph, as discussed in Section \ref{sec:safe_base}.}
    \label{fig:ring_k12}
\end{figure}

\begin{figure}[htb]
    \centering
    \begin{tikzpicture}[scale=0.5]
        \draw (0,4) circle (2);
        \node at (0,4) {\huge$G_1$};
        
        \draw (0,-4) circle (2);
        \node at (0,-4) {\huge$G_3$};
        
        \draw (-4,0) circle (2);
        \node at (-4,0) {\huge$G_2$};
        
        \draw (4,0) circle (2);
        \node at (4,0) {\huge$G_4$};

        % G1G2
        \draw (-2.2,3.5) circle (0.7);
        \draw (-3,2.8) circle (0.7);
        \draw (-3.8,2.1) circle (0.7);

        \draw (-1,2) circle (0.7);
        \draw (-1,1) circle (0.7);
        \draw (-2,0.8) circle (0.7);

        % G2G3
        \draw (-2.2,-3.5) circle (0.7);
        \draw (-3,-2.8) circle (0.7);
        \draw (-3.8,-2.1) circle (0.7);

        \draw (-1,-2) circle (0.7);
        \draw (-1,-1) circle (0.7);
        \draw (-2,-0.8) circle (0.7);

        % G3G4
        \draw (2.2,-3.5) circle (0.7);
        \draw (3,-2.8) circle (0.7);
        \draw (3.8,-2.1) circle (0.7);

        \draw (1,-2) circle (0.7);
        \draw (1,-1) circle (0.7);
        \draw (2,-0.8) circle (0.7);

        % G4G1
        \draw (2.2,3.5) circle (0.7);
        \draw (3,2.8) circle (0.7);
        \draw (3.8,2.1) circle (0.7);

        \draw (1,2) circle (0.7);
        \draw (1,1) circle (0.7);
        \draw (2,0.8) circle (0.7);

        % G1G2
        \draw[dashed] (-1.9,4) -- (-1.7,3.2);
        \draw[dashed] (-2.8,3.4) -- (-2.4,2.9);
        \draw[dashed] (-3.6,2.7) -- (-3.2,2.2);
        \draw[dashed] (-4.3,1.85) -- (-3.4,1.7);

        \draw[dashed] (-1.2,2.55) -- (-0.45,2.2);
        \draw[dashed] (-1.3,1.5) -- (-0.7,1.5);
        \draw[dashed] (-1.55,1.2) -- (-1.45,0.7);
        \draw[dashed] (-2.5,1.1) -- (-2.2,0.4);

        % G2G3
        \draw[dashed] (-1.9,-4) -- (-1.7,-3.2);
        \draw[dashed] (-2.8,-3.4) -- (-2.4,-2.9);
        \draw[dashed] (-3.6,-2.7) -- (-3.2,-2.2);
        \draw[dashed] (-4.3,-1.85) -- (-3.4,-1.7);

        \draw[dashed] (-1.2,-2.55) -- (-0.45,-2.2);
        \draw[dashed] (-1.3,-1.5) -- (-0.7,-1.5);
        \draw[dashed] (-1.55,-1.2) -- (-1.45,-0.7);
        \draw[dashed] (-2.5,-1.1) -- (-2.2,-0.4);

        % G3G4
        \draw[dashed] (1.9,-4) -- (1.7,-3.2);
        \draw[dashed] (2.8,-3.4) -- (2.4,-2.9);
        \draw[dashed] (3.6,-2.7) -- (3.2,-2.2);
        \draw[dashed] (4.3,-1.85) -- (3.4,-1.7);

        \draw[dashed] (1.2,-2.55) -- (0.45,-2.2);
        \draw[dashed] (1.3,-1.5) -- (0.7,-1.5);
        \draw[dashed] (1.55,-1.2) -- (1.45,-0.7);
        \draw[dashed] (2.5,-1.1) -- (2.2,-0.4);

        % G4G1
        \draw[dashed] (1.9,4) -- (1.7,3.2);
        \draw[dashed] (2.8,3.4) -- (2.4,2.9);
        \draw[dashed] (3.6,2.7) -- (3.2,2.2);
        \draw[dashed] (4.3,1.85) -- (3.4,1.7);

        \draw[dashed] (1.2,2.55) -- (0.45,2.2);
        \draw[dashed] (1.3,1.5) -- (0.7,1.5);
        \draw[dashed] (1.55,1.2) -- (1.45,0.7);
        \draw[dashed] (2.5,1.1) -- (2.2,0.4);

    \end{tikzpicture}
    \caption{Four icosahedral-sharing rings which form a seed graph, as discussed in Section \ref{sec:safe_base}.}
    \label{fig:otherScheme}
\end{figure}
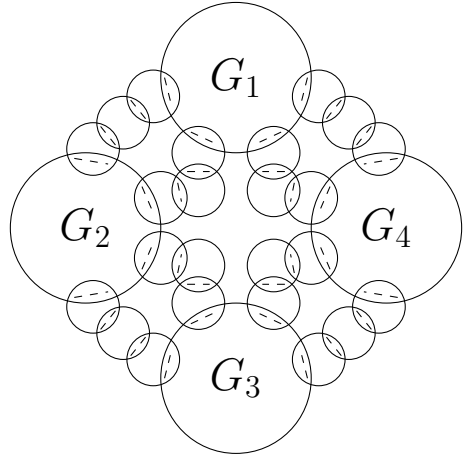

\begin{proof}[Proof of Theorem \ref{thm:starter}]
    Consider a safe double-butterfly \SG/, yielding the graph $G^s:H$ and its cover $\mathcal{X}'$, that splits a nonedge $(a,b)$ into the nonedges $(a_1,b_1)$ and $(a_2,b_2)$.
    Since $G$ is independent, Theorem \ref{thm:drsg-ind} shows that $G^s:H$ is independent.  
    Also, Lemma \ref{lem:sg-cover-ind} shows that $\mathcal{X}'$ is independent.  
    Note that we can choose the safe ear cover of a double-butterfly to contain exactly two clusters, one for the vertex set of each butterfly.  
    Hence, the shared set $S(\mathcal{X'})$ contains the nonedge $(u,v)$ in Figure \ref{fig:twoRoofs}, and a safe nonedge-split can clearly be performed on $(G^s:H,\mathcal{X}')$ and $(u,v)$.  
    Lastly, Lemmas \ref{lem:edge-count} and \ref{lem:ie-count} show that $rank(G^s:H) = IE(G^s:H,\mathcal{X}')$.  
    The above facts show that $G^s:H$ is a safe base graph.  

    Next, let $G'$ be obtained via a $k$-sum on a safe base graph $G$ and an independent graph $H$.  
    Since $G$ is independent and $k$-sums preserve independence, $G'$ is independent.  
    Let $\mathcal{X}$ be a $2$-thin cover such that a safe nonedge-split can be performed on $(G,\mathcal{X})$ and let $\mathcal{X}' = \mathcal{X} \cup I$, where $I$ is the collection of all sets $\{u,v\}$ such that $(u,v)$ is an edge of $H$ not in its base complete graph.  
    It is easy to see that $\mathcal{X}'$ is a $2$-thin cover of $G'$ such that $S(\mathcal{X}') = S(\mathcal{X})$, and so $\mathcal{X}'$ is independent since $\mathcal{X}$ is independent.  
    Hence, since a safe nonedge-split can be performed on $(G,\mathcal{X})$ and some nonedge $(a,b)$ in $S(\mathcal{X})$, one can also be performed on $(G',\mathcal{X}')$ and $(a,b)$.  
    Lastly, observe that $rank(G') = rank(G) + |I|$, and the definition of $\mathcal{X'}$ makes it clear that $IE(G',\mathcal{X}') = IE(G,\mathcal{X}) + |I|$.  
    Therefore, since $rank(G) = IE(G,\mathcal{X})$, we get that $rank(G') = IE(G',\mathcal{X}')$, and so $G'$ is a safe base graph.  
    
    Finally, let $G'$ be obtained via a Henneberg-I extension on a safe base graph $G$.  
    The proof is almost identical to the previous case, except for how $\mathcal{X}'$ is constructed from $\mathcal{X}$, so we omit all but this part of the proof.  
    $\mathcal{X}'$ is constructed from $\mathcal{X}$ by adding $u$ to some cluster that contains all neighbors of $u$ if such a cluster exists, or adding to $\mathcal{X}$ each set $\{u,x\}$ such that $u$ neighbors $x$ otherwise.  
    This completes the proof.  
\end{proof}

\section{Constructing dependent nucleation-free graphs with implied nonedges}
\label{sec:dependent}

Here we prove Theorem \ref{thm:dependent} and then give two examples of its application.  
% For any graph $G$ and any set $F$ of its edges and nonedges, we denote the graph $(V(G), E(G) \setminus F)$ by $G \setminus F$, and by $G \setminus f$ when $F = \{f\}$.  

\begin{proof}[Proof of Theorem \ref{thm:dependent}]
    Let $G = G_1 \cup G_2$.  
    For (i), the fact that $G_1$ and $G_2$ share only the endpoints of $f$ implies that any subgraph of $G$ either is a subgraph of $G_1$ or $G_2$ but not both or is separated by the endpoints of $f$.  
    Hence, since $G_1$ and $G_2$ are nucleation-free, we get that $G$ is nucleation-free.  
    For (ii), since $f$ is implied in both $G_1$ and $G_2$, the circuit elimination axiom shows that $G$ is dependent.  
    Lastly, for (iii), $G$ is dependent by (ii), and we show $G \setminus e$ is independent for any edge $e$ of $G$ as follows.  
    If $G_1 \cup f$ and $G_2 \cup f$ are circuits, then $(G_1 \cup f) \setminus e_1$ and $(G_2 \cup f) \setminus e_2$ are independent graphs.  
    Since, by Lemma \ref{lem:abs}, $2$-sums preserve independence in any abstract $3$-rigidity matroid, the graph $G'$ resulting from the $2$-sum of these two graphs is independent.  
    Wlog assume $e$ is $e_1$.  
    Observe that $G' \cup e_2$ contains a unique circuit, which involves $f$, and so $(G' \cup e_2) \setminus f$ must be independent.  
    Since $G \setminus e = (G' \cup e_2) \setminus f$, we get that $G$ is a circuit.  
    Note that the above proof shows (iii) holds for any abstract $d$-rigidity matroid.  
\end{proof} 

Note that the while theorem is very general, there are infinitely many concrete example graphs to which it can be applied, and as the theorems in Section \ref{sec:contributions} show, all of these example graphs can be extended to infinite families using any of the inductive constructions discussed in this paper.  
Below are two well-known and easily drawn graphs to which Theorem \ref{thm:dependent} applies.  
\begin{figure}[htb]
    \centering
    \begin{subfigure}[t]{0.49\linewidth}
        \centering
        \includegraphics[width=0.7\textwidth]{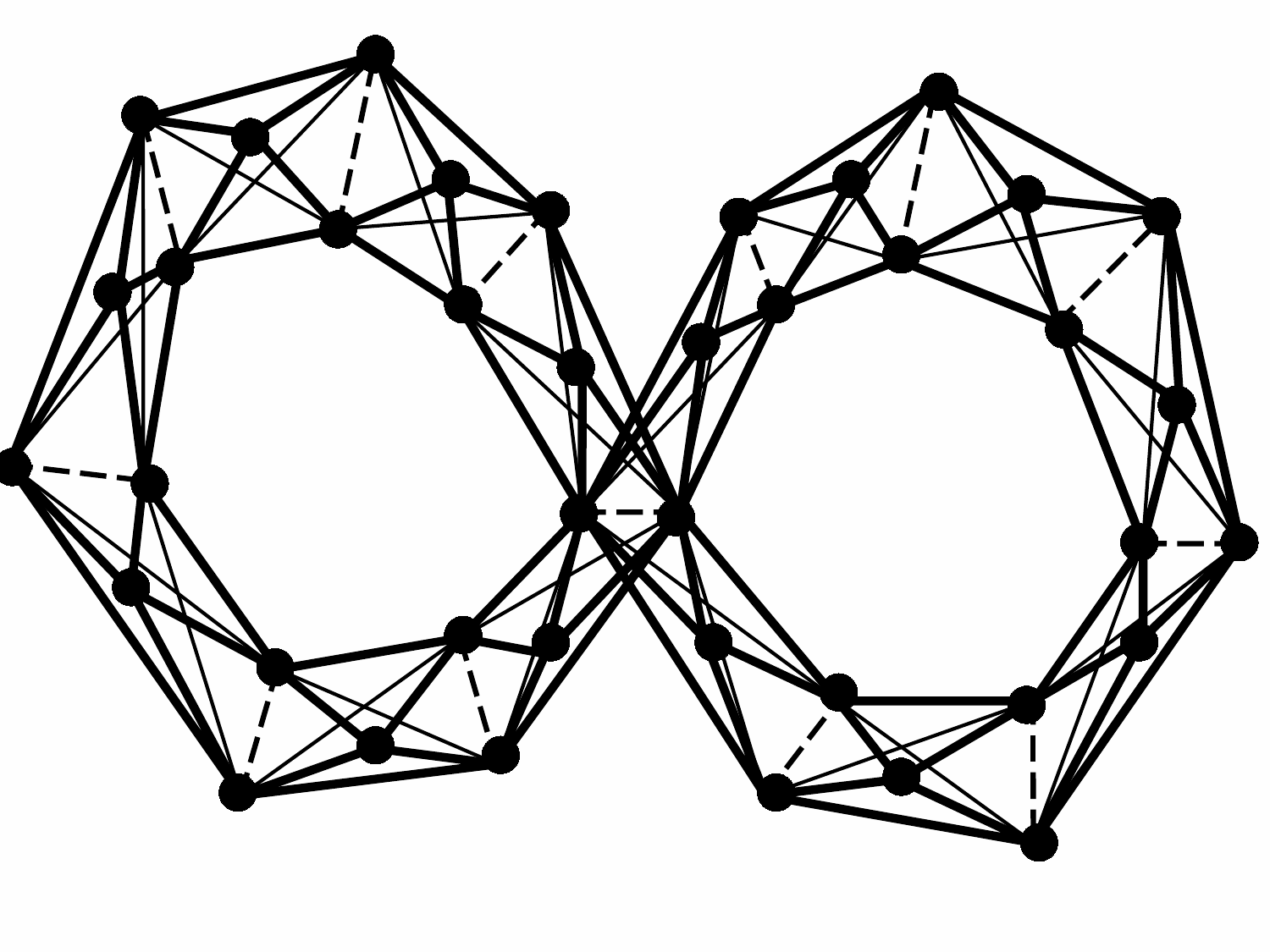}
    \end{subfigure}
    \begin{subfigure}[t]{0.49\linewidth}
        \centering
        \includegraphics[width=0.7\textwidth]{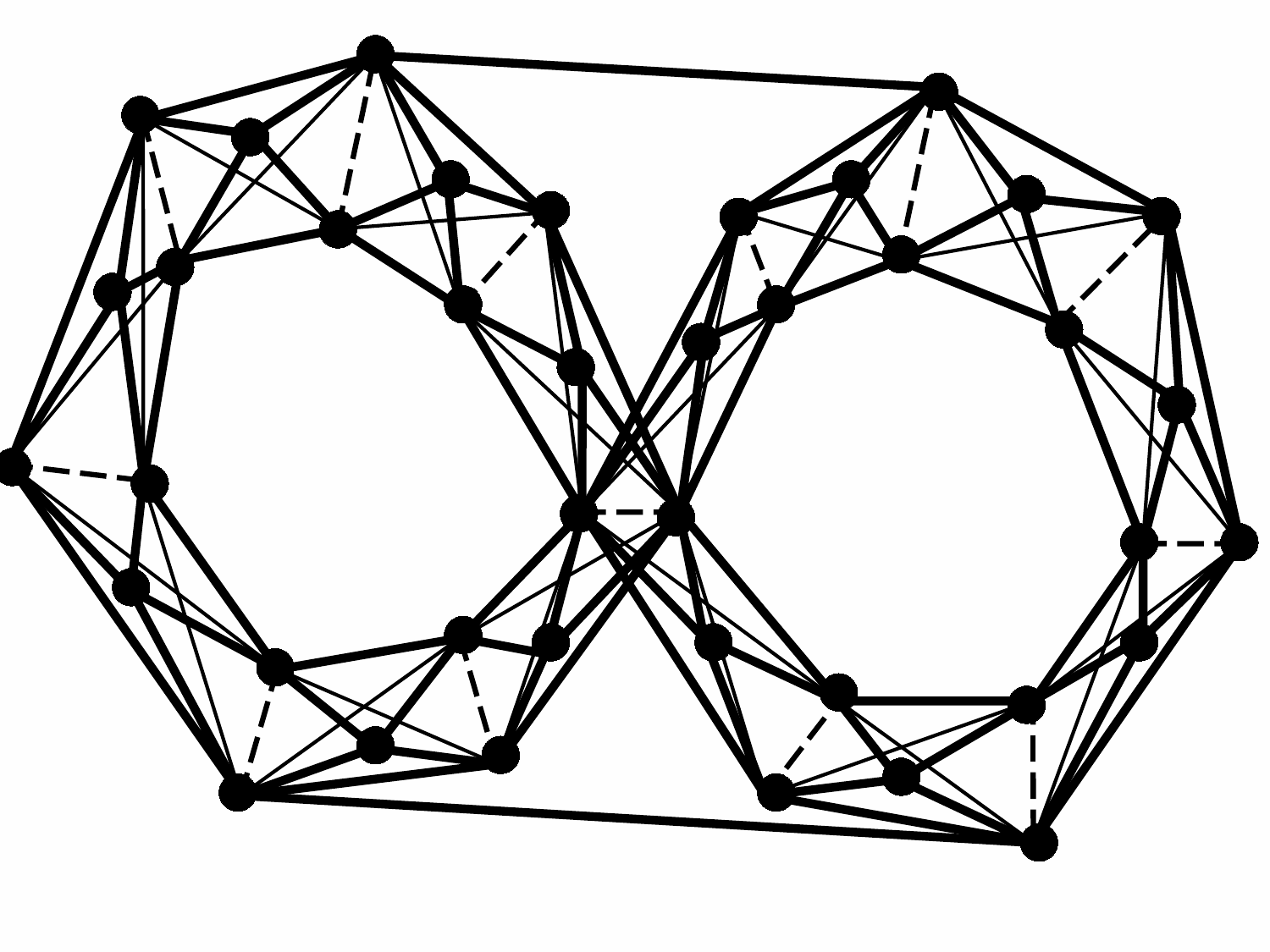}
    \end{subfigure}
    \caption{Left: a graph $G$ consisting of two rings of butterflies sharing a hinge and each with $7$ links.  
    Right: adding two edges to $G$ as shown yields a $(3,6)$-tight graph.  
    See the example applications of Theorem \ref{thm:dependent} in Section \ref{sec:dependent}.}
    \label{fig:doubleRing}
\end{figure}
\begin{itemize}
    \item For the graph $G$ consisting of two rings of butterflies sharing a hinge on the left in Figure \ref{fig:doubleRing}, applying Theorems \ref{thm:ring_of_roofs} and \ref{thm:dependent} in sequence shows that $G$ is dependent and nucleation-free.  

    \item Adding the edges to $G$ shown on the right in Figure \ref{fig:doubleRing} results in a $(3,6)$-tight graph, which is dependent and flexible since it has $G$ as a subgraph.  
\end{itemize}

\section{Open problems}
\label{sec:conclusion}

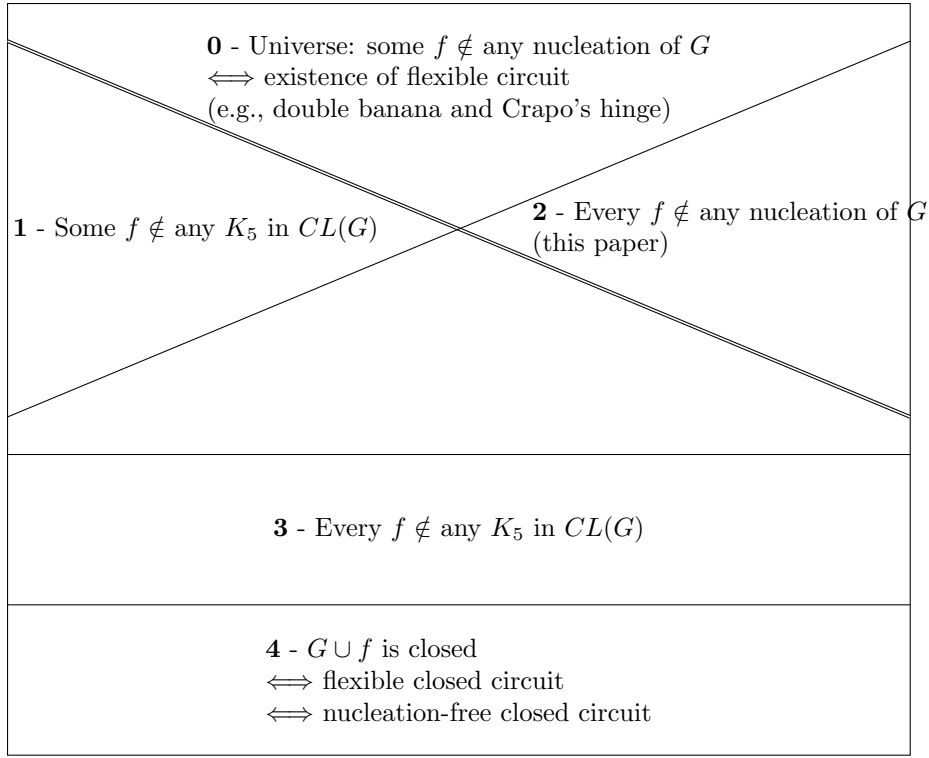
\begin{figure}
    \centering
    \begin{tikzpicture}
        \node[align=left] at (6,10.5) {Assumptions: $G$ independent graph, $|V(G)| \geq 6$, $f$ implied nonedge of $G$.};
        
        \draw (0,0) rectangle (12,10);
        
        \node[align=left] at (6,9) {\textbf{0} - Universe: some $f \notin$ any nucleation of $G$\\ 
        $\Longleftrightarrow$ existence of flexible circuit\\ 
        (e.g., double banana and Crapo's hinge)};

        \node[align=left] at (9.6,7) {\textbf{2} - Every $f \notin$ any nucleation of $G$\\ 
        (this paper)};

        \draw[double] (0,9.5) -- (12,4.5);

        \node[align=left] at (2.5,7) {\textbf{1} - Some $f \notin$ any $K_5$ in $CL(G)$};

        \draw (0,4.5) -- (12,9.5);

        \draw (0,4) -- (12,4);

        \node[align=left] at (6,3) {\textbf{3} - Every $f \notin$ any $K_5$ in $CL(G)$};

        \draw (0,2) -- (12,2);

        \node[align=left] at (6,1) {\textbf{4} - $G \cup f$ is closed\\ 
        $\Longleftrightarrow$ flexible closed circuit\\ 
        $\Longleftrightarrow$ nucleation-free closed circuit};
    \end{tikzpicture}
    \caption{Each block in the diagram represents a set of graphs defined by the labels.  
    Higher indexed sets are subsets of lower indexed sets, except possibly sets 1 and 2 whose subset relation is unknown.  
    It is unknown whether the sets below the double line  are non-empty, and non-emptiness of any of them would disprove the maximality conjecture.  
    See the open problems in Section \ref{sec:conclusion}.  }
    \label{fig:open}
\end{figure}

\begin{itemize}
\item
One set of key open problems is shown in Figure \ref{fig:open}. Specifically, showing emptiness  of any of the classes below the double line would represent progress toward proving Graver's maximality conjecture, whereas showing nonemptiness would refute it.
\item
Given an oracle that recognizes  any  arbitrarily large seed graphs  such as those indicated  by Section \ref{sec:safe_base} 
or the links in Theorem \ref{thm:henneberg-ii_ring} and the remark thereafter, 
give a polynomial time algorithm to  recognize that an input graph is the result of (nucleation-free, independent graph, with implied nonedge) constructions in this paper applied to such a seed graph. 
 \item
Extend the application of the flex-sign technique, i.e, find other graphs besides the butterfly that satisfy the expansion/contraction property.  
 \item
Nail down the remainder of the proof of that known inductive constructions cannot be combined to replicate double-butterfly \SG/.
\item
To complete our understanding of nucleation-free graphs with implied nonedges,  study examples extending those in  Section  \ref{sec:safe_base} that cannot be obtained by any of our construction schemes. 
\item
Another interesting open problem is to extend our double-butterfly \SG/ so that it additionally preserves the dimension of the flex space.  
In order to do that, we need to add two edges to the double-butterfly.  
One possible way is to add two more edges $(c', a_1)$, $(c, b_2)$ (or $(c', b_1)$, $(c, a_2))$.  
We note that our current method to show independence in Theorem \ref{thm:drsg-ind} would fail as  there is a non-zero stress on the added part.  
However, if we can show that  the existence of a generic circuit in the new graph $G$ implies there  are edges   $\{(w_1, t_1), \ldots (w_n, t_n)\}$ that remain dependent for the non-generic position $\p$ used in Theorem \ref{thm:drsg-ind}, i.e., there exists non-zero stresses $\{s_1, \ldots, s_n\}$ s.t. $\sum_i s_i (\p(w_i)-\p(t_i))$ $=$ $0$, then we can simply use Lemma \ref{lem:dra-c} to complete our proof.
\end{itemize}

\begin{comment}
of dependent 3D graphs satisfying Maxwell's counts which have no proper rigid nuclei besides trivial ones (triangles and $K_4$). Along the way, we find a scheme to construct, from base families, arbitrarily large nucleation-free graphs that are independent but have implied nonedges.

In this paper we showed that there are graphs with implied
nonedges that have no nuclei, i.e. {\em nucleation-free} dependencies. In fact, we show a stronger version of Problem ($\star$) that implied
nonedges may exist when there is even no Maxwell-rigid subgraph (a subgraph $(V^\prime, E^\prime)$ that satisfies Maxwell's count and has $3|V^\prime| - 6$ edges) present.
The examples in this paper extend to rings constructed from more general units than butterflies.
However, it would be interesting to
extend our family of examples to other structures beyond rings.

\end{comment}

\bibliographystyle{plain}

\appendix

\section{Issues in Tay's paper \cite{taybar:1993}}\label{sec:Taycounter}

This paper proves the existence of large families of graphs that are independent, have implied nonedges, and are nucleation-free, and provides inductive constructions for starting from such graphs and generating larger ones.  
As mentioned in Section \ref{sec:introduction}, independent graphs with implied nonedges have been conjectured and written down by many (\cite{taybar:1993}, \cite{jackson:jordan:rank3dRigidity:egres-05-09:2005}).  
However, to the best of our knowledge, we are the first to give proofs, and the first to study the nucleation-free property.  

In particular, Tay presented general classes of rings and claimed that they are circuits after adding an edge between the endpoints of a hinge nonedge \cite[Theorem 4.7]{taybar:1993}, i.e., the ring without this hinge edge is independent and the hinge nonedge is implied.  
Tay noted that sufficiently large rings of butterflies are contained in one of these classes.  
However, Tay's proof of this theorem  is imprecise at best.  
In \cite[Proposition 4.6]{taybar:1993}, a key tool in the proof of the theorem, Tay considered a \emph{chain} 
% of butterflies 
$C_m = C(G_1,\ldots,G_m)$ where $G_i$ and $G_{i+1}$ are graphs on at least three vertices each whose intersection is a nonedge and where $G_1$ and $G_m$ contain edges $(p,q)$ and $(r,s)$, respectively; see Figure \ref{fig:counterTay1}.  
Tay claimed that if $C_m$ is a circuit, then the ring $R_m$ obtained from $C_m$ by identifying $p$ with $r$ and $q$ with $s$ is dependent and $(p,q)$ is implied in $R_m \setminus (p,q)$.  
Note that Crapo's Hinge is an example of a chain that is a circuit, and the corresponding ring is a ring of butterflies.  
His proof of the claim starts with a generic framework of $R_m$ and a framework of $C_m$ that agrees with the framework of $R_m$, i.e., in which $p$ coincides with $r$ and $q$ coincides with $s$.  
Since $C_m$ is a circuit, its framework has a non-zero equilibrium stress $\lambda$.  
% xxx supported on all the edges.  
If the sum of the stresses $\lambda_{pq}$ and $\lambda_{rs}$ on the edges $(p,q)$ and $(r,s)$, respectively, is not equal to $0$, then the generic framework of $R_m$ has a non-zero equilibrium stress supported on $(p,q)$, which proves the claim.  
% , which implies that $R_m$ is dependent and $(p,q)$ is implied in this framework after deleting the edge $(p,q)$.  

\begin{figure}[htb]
    \centering
    \includegraphics[width=0.6\textwidth]{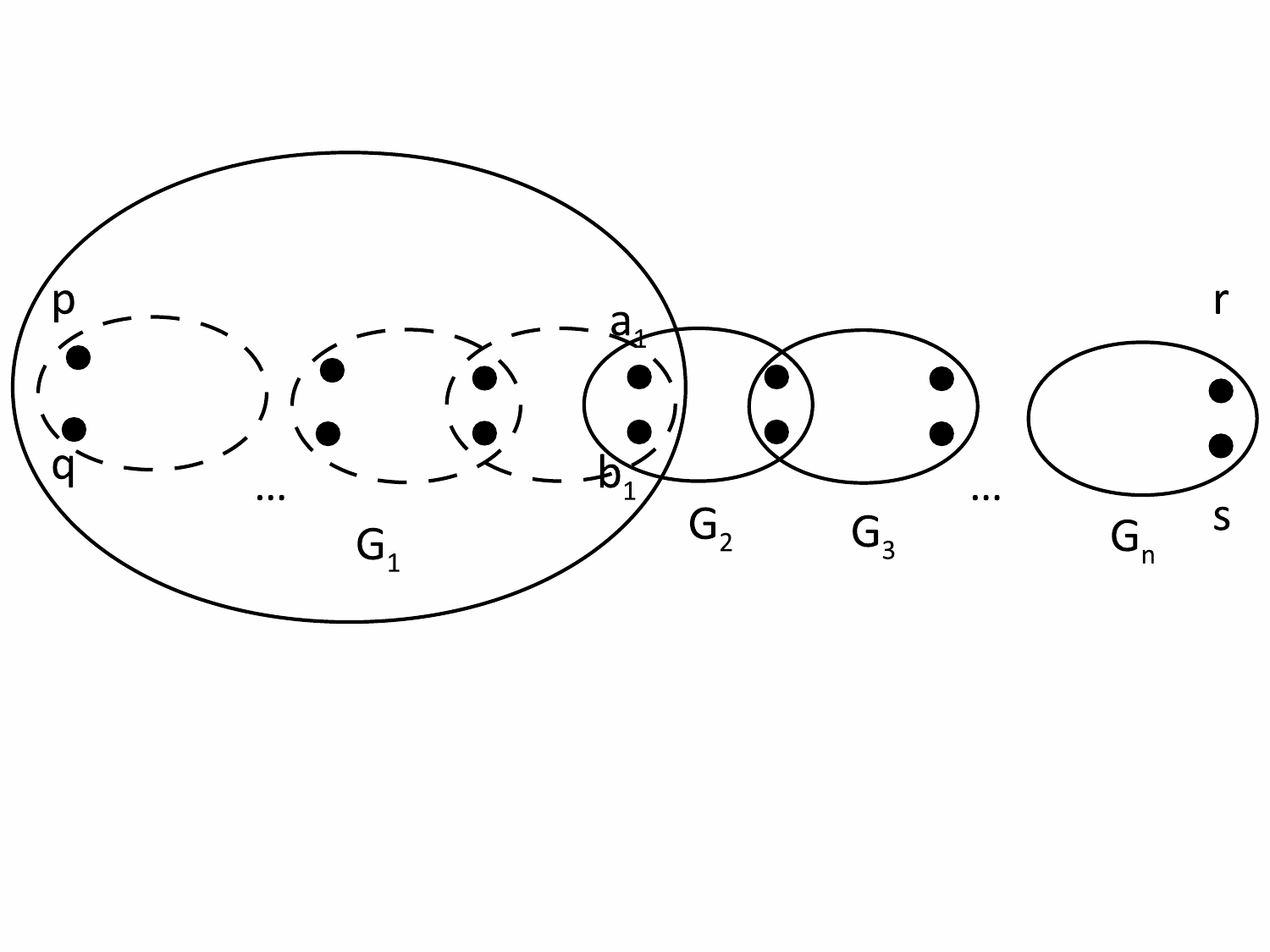}
    \caption{A chain illustrating a hole in Tay's proof discussed in Appendix \ref{sec:Taycounter}.  
    }
    \label{fig:counterTay1}
\end{figure}

\begin{figure}[!htbp]
\centering
\scalebox{0.4}[0.35]{
\includegraphics{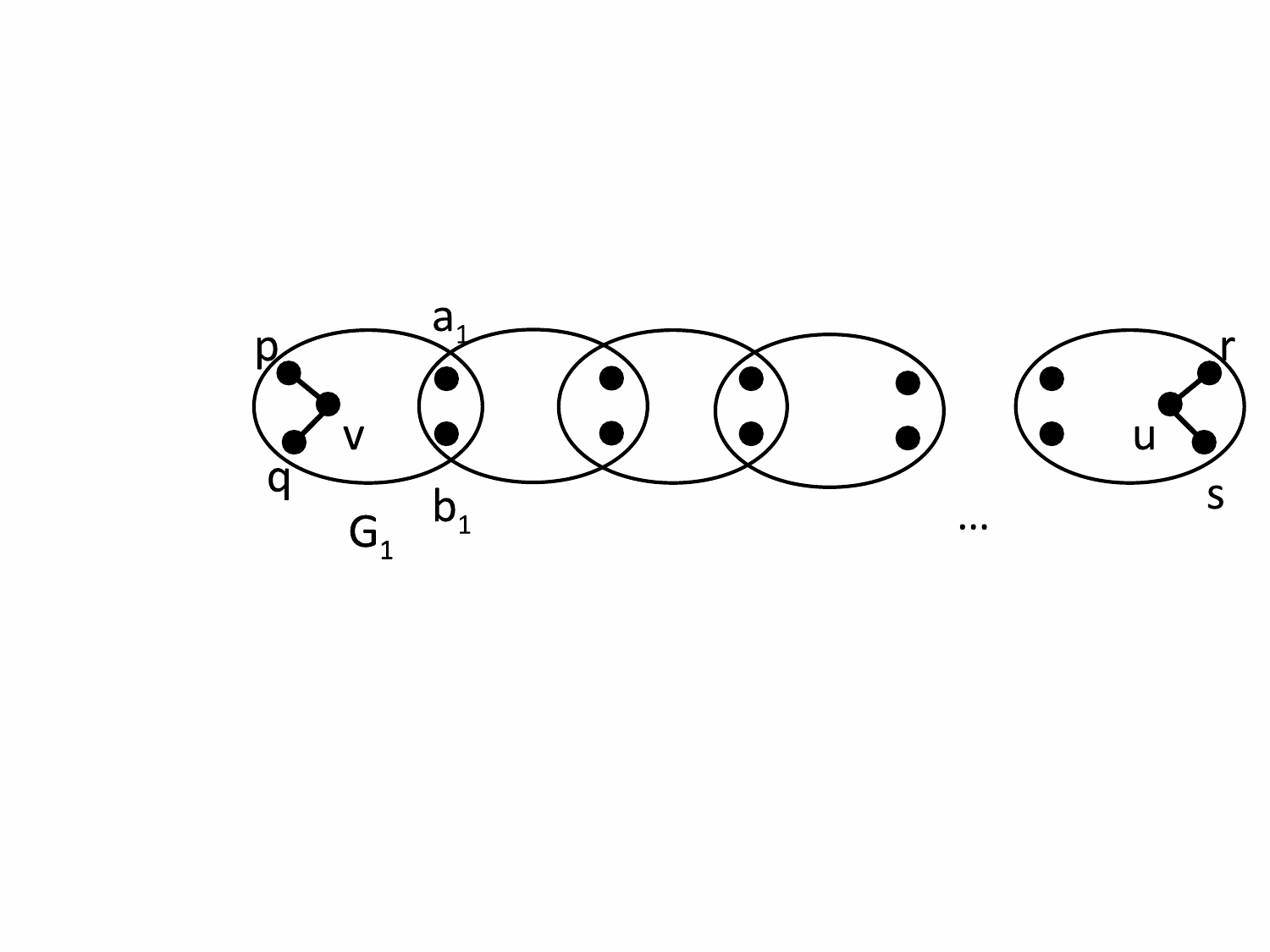}}
\caption{A framework of the chain in Figure \ref{fig:counterTay1} where all vertices adjacent to $p$ or $q$ are at the same position as $v$, all vertices adjacent to $r$ or $s$ are at the same position as $u$, the pairs $(p,q)$, $(p,v)$, and $(q,v)$ form an isosceles right triangle, and the pairs $(r,s)$, $(r,u)$, and $(s,u)$ form an isosceles right triangle.  
See Appendix \ref{sec:Taycounter}.  }\label{fig:counterTay2}
\end{figure}

On the other hand, if the sum of the stresses $\lambda_{pq}$ and $\lambda_{rs}$ on the edges $(p,q)$ and $(r,s)$, respectively, is equal to $0$, then Tay stated that ``one can keep $\lambda_{pq}$ fixed and change the value of $\lambda_{rs}$ by changing the position of either $a_1$ or $b_1$'' ($a_1$ and $b_1$ are the two vertices shared by the first two graphs $G_1$ and $G_2$).
However, he did not mention how to change the position of $a_1$ or $b_1$ to achieve this, and it is not clear that this is always possible.  
In Fig.~\ref{fig:counterTay1}, $G_1$ is itself a chain, and the illustrated framework of $C_m$ is symmetric about the line containing $a_1$ and $b_1$.  
If we move $a_1$ or $b_1$ along this line, then $\lambda_{pq}$ and $\lambda_{rs}$ always cancel out.  
The framework of $C_m$ in Fig.~\ref{fig:counterTay2} is such that $a_1$ and $b_1$ are not adjacent to $p$ or $q$, all vertices adjacent to $p$ or $q$ are at the same position (indicated as $v$), and $p$, $q$, and $v$ are in such a position that the angle between $(p,v)$ and $(p,q)$ is $45$ degrees and the angle between $(q,v)$ and $(p,q)$ is $45$ degrees.  
 In this case, it is easy to see that $\lambda_{pq} = 0$.  
 We can similarly ensure that $\lambda_{rs} = 0$.  
 Now, no matter how we move $a_1$ or $b_1$, $\lambda_{pq}$ and $\lambda_{rs}$ always cancel out.  
 While these frameworks are not in the special position discussed in Tay's argument, they illustrate holes in the proof.

 % Another similar hole in the proof is that he did not specify any property of the starting framework of $C_m$, so it is possible (unless explicitly shown to the contrary) that no change in the position of $a_1$ or $b_1$ ensures $\lambda_{pq}$ and $\lambda_{rs}$ do not cancel out.  

%%=============================================%%
%% For submissions to Nature Portfolio Journals %%
%% please use the heading ``Extended Data''.   %%
%%=============================================%%

%%=============================================================%%
%% Sample for another appendix section			       %%
%%=============================================================%%

%% \section{Example of another appendix section}\label{secA2}%
%% Appendices may be used for helpful, supporting or essential material that would otherwise 
%% clutter, break up or be distracting to the text. Appendices can consist of sections, figures, 
%% tables and equations etc.

\end{document}